\documentclass[10pt,a4paper]{article}
\usepackage[utf8]{inputenc}
\usepackage[english]{babel}
\usepackage[T1]{fontenc}
\usepackage{amsmath}
\usepackage{amsthm}
\usepackage{dsfont}
\usepackage{amsfonts}
\usepackage{amssymb}
\usepackage{graphicx}
\usepackage{tabularx}
\usepackage{lmodern}
\usepackage[colorlinks=true,linkcolor=blue]{hyperref}
\usepackage{relsize}
\usepackage{geometry}
\usepackage{mathtools}
\usepackage{enumitem}

\geometry{a4paper, top=2.5cm, bottom=3.5cm, left=2.5cm, right=2.5cm, marginparwidth=1.2cm}


\newcommand*{\PP}{\mathbb{P}} 
\newcommand*{\E}{\mathbb{E}} 
\newcommand*{\V}{\mathrm{Var}} 
\newcommand*{\R}{\mathbb{R}} 
\newcommand*{\N}{\mathbb{N}} 
\newcommand*{\Z}{\mathbb{Z}} 
\newcommand*{\C}{\mathbb{C}} 


\newtheoremstyle{theorema}%
    {4pt}
    {4pt}
    {\slshape}
    {}
    {\bfseries}
    {.}
    {.5em}
    {}
\theoremstyle{theorema} 
\newtheorem{theoreme}{Theorem}

\newtheorem{prop}[theoreme]{Proposition}
\newtheorem{lem}[theoreme]{Lemma}

\newtheorem{corollaire}[theoreme]{Corollary}

\theoremstyle{definition} 

\theoremstyle{remark} 
\newtheorem*{remarque}{Remark}

\title{On a limiting point process related to modified permutation matrices}

\author{Valentin B\textsc{ahier} \thanks{valentin.bahier@math.univ-toulouse.fr, Institut de Math\'ematiques de Toulouse, 118 route de Narbonne, F-31062 Toulouse Cedex 9, France.}}

\date{\today}

\begin{document}
\maketitle

\begin{abstract}
We consider random permutation matrices following a one-parameter family of deformations of the uniform distribution, called Ewens' measures, and modifications of these matrices where the entries equal to one are replaced by i.i.d uniform random variables on the unit circle. For each of these two ensembles of matrices, rescaling properly the eigenangles provides a limiting point process as the size of the matrices goes to infinity. If $J$ is an interval of $\R$, we show that, as the length of $J$ tends to infinity, the number of points lying in $J$ of the limiting point process related to modified permutation matrices is asymptotically normal. Moreover, for permutation matrices without modification, if $a$ and $a+b$ denote the endpoints of $J$, we still have an asymptotic normality for the number of points lying in $J$, in the two following cases: [$a$ fixed and $b \to \infty$] and [$a,b \to \infty$ with $b$ proportional to $a$].
\end{abstract}

\section{Introduction}

\subsection{Spectrum of random permutation matrices}


Looking at the counting function of eigenvalues of a random permutation matrix, Wieand~\cite{wieand2000eigenvalue} establishes that the fluctuation of the number of eigenvalues on a fixed arc of the unit circle is asymptotically Gaussian when the size of the matrix goes to infinity, and gives asymptotic expressions of the expectation and variance. 

In \cite{wieand2003permutation}, Wieand tackles more general ensembles of matrices involving random permutations, and shows that her normality result on the fluctuation of the number of eigenvalues holds for these models, with similar behaviors of the expectations and variances.

In these results, Wieand considers uniformly distributed permutations on the symmetric group $\mathfrak{S}_n$. Other measures can be relevant to work with. For instance, the family of Ewens measures are of great interest in population genetics, and have many nice properties which make the study of random permutations simple (some of these properties will be highlighted in the present paper). Arratia, Barbour and Tavaré \cite{arratia2003logarithmic} give and show many results on Ewens measures. Formally, these measures can be defined in the following way:

Let $\theta >0$ and $n\geq 1$. A random permutation $\sigma_n$ of $\mathfrak{S}_n$ follows the Ewens$(\theta )$ distribution if 
\[\forall \pi \in \mathfrak{S}_n, \ \mathbb{P} (\sigma_n = \pi ) = \mathbb{P}_\theta^{(n)} (\pi ) = \frac{\theta^{K(\pi )}}{\theta (\theta + 1) \cdots (\theta + n-1 )}\]
where $K(\pi )$ denotes the total number of cycles of $\pi $ once decomposed into disjoint cycles. The case $\theta =1$ corresponds to the uniform measure.

In this paper we deal with this family of measures on the sets of permutation matrices (we identify the set of the $n$-by-$n$ permutation matrices with $\mathfrak{S}_n$). We also consider modifications of these matrices, where the entries equal to one are replaced by complex numbers of modulus one. These modified permutation matrices can be seen as elements of the wreath product $S^1 \wr \mathfrak{S}_n$, and for the non-zeros entries we take i.i.d random variables uniformly distributed on the unit circle. One main motivation of taking such a law is to bring closer the analogy with the Circular Unitary Ensemble (the $n$-by-$n$ modified permutation matrices form an infinite subgroup of the set of $n$-by-$n$ unitary matrices).



A remarkable property that we would like to point out in this work is the invariance of the behavior of the counting function of eigenvalues by change of scale. Indeed, we observe that the leading coefficients in the asymptotic variances are typically the same through the two following approaches:
\begin{itemize}
	\item Count the eigenvalues in macroscopic or mesoscopic arcs of the unit circle and then let the size of the matrix go to infinity.
	\item Start from the limiting point process of the microscopic landscape of eigenangles, then count the points in any interval and let the length of this interval tend to infinity. 
\end{itemize}

In order to precise this phenomenon, let us recall a few results which will be helpful for comparison purposes. We use the following notations:

Let $(M_n)_{n\geq 1}$ be a sequence of random permutation matrices following the Ewens$(\theta )$ distribution, and let $(\widetilde{M}_n)_{n\geq 1}$ be the sequence of matrices $M_n$ where the entries equal to one are replaced by i.i.d random variables uniformly distributed on the unit circle. \\
For $n\geq 1$, define $X_n^I$ and $\widetilde{X}_n^I$ as the respective numbers of eigenvalues of $M_n$ and $\widetilde{M}_n$ which lie in the arc $I:=\left(\mathrm{e}^{2i\pi \alpha}, \mathrm{e}^{2i\pi \beta}\right]$ of the unit circle, for some $\alpha, \beta$ such that $0\leq \alpha < 1 $ and $\alpha < \beta \leq \alpha + 1$.

For all real numbers $\alpha$ and $\beta$, we set:
\begin{equation}
c_1 = c_1 (\alpha ,\beta ) = \lim_{n\to \infty} \frac{1}{n} \sum_{j=1}^n (\{j\beta \}- \{j\alpha \}),
\end{equation}
\begin{equation}
c_2 = c_2 (\alpha ,\beta ) = \lim_{n\to \infty} \frac{1}{n} \sum_{j=1}^n (\{j\beta \}- \{j\alpha \})^2,
\end{equation}
\begin{equation}
\ell = \ell (\beta - \alpha) = \lim_{n\to \infty} \frac{1}{n} \sum_{j=1}^n \{j(\beta -\alpha)\}(1-\{j(\beta -\alpha)\}),
\end{equation}
where $\{x\}$ denotes the fractional part of $x$. These limits exist and are finite (see e.g.\cite{Bahier2017} for a proof).

\subsubsection*{Macroscopic scale}

The following result has been first established by Wieand~\cite{wieand2000eigenvalue} \cite{wieand2003permutation} in the particular case $\theta=1$, then by Ben Arous and Dang \cite{ben2015fluctuations} for permutation matrices in the general case $\theta>0$, and can be deduced under stronger assumptions from a result of Dang and Zeindler~\cite{dang2014characteristic} on the logarithm of the characteristic polynomial of permutation matrices. 
\begin{prop}\label{prop:Wieand}
Let $0\leq \alpha < 1$ and $ \alpha < \beta \leq \alpha +1$. As $n \to \infty$,
\begin{align*}
\E (X_n^I ) &= n (\beta - \alpha ) - \theta c_1 \log n + o (\log n) \\
\V (X_n^I ) &= \theta c_2 \log n + o (\log n) 
\end{align*}
and 
\begin{align*}
\E (\widetilde{X}_n^I ) &= n (\beta - \alpha ) \\
\V (\widetilde{X}_n^I ) &= \theta \ell \log n + o (\log n) .
\end{align*}
\end{prop}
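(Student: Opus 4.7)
The plan is to exploit the block-diagonal structure of permutation matrices along cycles, reducing everything to linear statistics of the cycle counts. Let $a_m = a_m(\sigma_n)$ denote the number of cycles of length $m$ in $\sigma_n$. Since the eigenvalues of the block associated to a cycle of length $m$ are the $m$-th roots of unity, if $N_m(\alpha,\beta) = \#\{k \in \{0,\ldots,m-1\} : k/m \bmod 1 \in (\alpha,\beta]\}$ then
$$X_n^I = \sum_{m=1}^n a_m\, N_m(\alpha,\beta).$$
An elementary count gives $N_m(\alpha,\beta) = m(\beta-\alpha) - (\{m\beta\} - \{m\alpha\})$, and combining with $\sum_m m\, a_m = n$ yields the key identity
$$X_n^I = n(\beta-\alpha) - \sum_{m=1}^n a_m\,(\{m\beta\} - \{m\alpha\}).$$
Both statements on $X_n^I$ thus reduce to first and second moments of linear functionals of $(a_m)$.

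For the expectation, one uses the classical Ewens moment formula $\E[a_m] = \frac{\theta}{m}\,\frac{\Gamma(n+1)\,\Gamma(\theta+n-m)}{\Gamma(n-m+1)\,\Gamma(\theta+n)}$ for $1 \leq m \leq n$, which is $\frac{\theta}{m}(1+o(1))$ uniformly on $m \leq n^{1-\varepsilon}$ and remains $O(1/m)$ throughout. Substituting and invoking the Cesàro definition of $c_1$ together with Abel summation (a form of Kronecker's lemma) gives $\sum_{m=1}^n \frac{1}{m}(\{m\beta\}-\{m\alpha\}) = c_1 \log n + o(\log n)$, whence $\E[X_n^I] = n(\beta-\alpha) - \theta c_1 \log n + o(\log n)$. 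The contribution of the tail $m > n^{1-\varepsilon}$ is of order $O(1)$ in expectation and is therefore absorbed in the error.

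For the variance, expand
$$\V(X_n^I) = \sum_{m,m'} \Cov(a_m, a_{m'})\,(\{m\beta\}-\{m\alpha\})(\{m'\beta\}-\{m'\alpha\}).$$
Under the Ewens measure, the Feller coupling (or the Arratia--Barbour--Tavaré total variation estimate from~\cite{arratia2003logarithmic}) shows that $(a_1,\ldots,a_b)$ is asymptotically a vector of independent Poissons with parameters $\theta/m$ whenever $b=o(n)$, so that $\V(a_m) = \theta/m + o(1/m)$ and the off-diagonal covariances are negligible in the relevant range. The diagonal contribution is then $\theta \sum_{m=1}^n \frac{1}{m}(\{m\beta\}-\{m\alpha\})^2 = \theta c_2 \log n + o(\log n)$ by the same Abel argument applied to the definition of $c_2$. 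The main technical step is verifying that the off-diagonal cross terms together with the tail $m \geq n^{1-\varepsilon}$ contribute only $o(\log n)$; this is where precise moment bounds from the Feller coupling are essential.

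For the modified ensemble, conditioning on $\sigma_n$ simplifies matters considerably. A cycle of length $m$ decorated with i.i.d.~uniform entries on $S^1$ has characteristic polynomial $z^m - U$ where the product $U$ is itself uniform on $S^1$; its $m$ eigenvalues form a regular $m$-gon with uniform random rotation, so the count in $(\alpha,\beta]$ has mean $m(\beta-\alpha)$ and variance $\{m(\beta-\alpha)\}(1-\{m(\beta-\alpha)\})$. Since distinct cycles are dressed independently, $\E[\widetilde{X}_n^I \mid \sigma_n] = \sum_m a_m \cdot m(\beta-\alpha) = n(\beta-\alpha)$ identically, yielding the exact formula. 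The law of total variance then gives $\V(\widetilde{X}_n^I) = \sum_{m=1}^n \E[a_m]\,\{m(\beta-\alpha)\}(1-\{m(\beta-\alpha)\})$, and Abel summation applied to the definition of $\ell$ produces $\theta\,\ell\,\log n + o(\log n)$. The main obstacle common to all four statements is controlling the Ewens moments for cycle lengths comparable to $n$; once that tail bound is in hand, the arithmetic input (Weyl-type Cesàro averaging) and the asymptotic independence of short cycles deliver the result.
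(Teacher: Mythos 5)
The paper itself offers no proof of Proposition~\ref{prop:Wieand}: it is quoted from Wieand \cite{wieand2000eigenvalue}, Ben Arous--Dang \cite{ben2015fluctuations} and, for the modified matrices, \cite{Bahier2017}. Your sketch reconstructs essentially the argument of those references: write $X_n^I$ as a linear statistic $n(\beta-\alpha)-\sum_m a_m(\{m\beta\}-\{m\alpha\})$ of the cycle counts, use the Ewens moment formula and the Feller coupling for the first two moments together with Abel summation against the Ces\`aro limits defining $c_1,c_2$; for $\widetilde X_n^I$, condition on $\sigma_n$, note that a decorated $m$-cycle gives a uniformly rotated regular $m$-gon (exact conditional mean $m(\beta-\alpha)$, conditional variance $\{m(\beta-\alpha)\}(1-\{m(\beta-\alpha)\})$, independence across cycles), so the conditional mean is deterministic and the law of total variance reduces everything to $\sum_m \E(a_m)\{m(\beta-\alpha)\}(1-\{m(\beta-\alpha)\})$. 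This outline is sound and matches the literature. Two quantitative claims should be repaired, though neither is fatal: first, $\E(a_m)$ is \emph{not} $O(1/m)$ uniformly up to $m=n$ when $\theta<1$ (indeed $\E(a_n)\sim\theta\Gamma(\theta)n^{-\theta}\gg 1/n$, the correct uniform bound being of the form $\tfrac{\theta}{m}(1-\tfrac{m}{n}+\tfrac1n)^{\theta-1}$ up to constants); second, the tail $m>n^{1-\varepsilon}$ does not contribute $O(1)$ to the expectation, since $\sum_{m>n^{1-\varepsilon}}\E(a_m)$ is of order $\varepsilon\theta\log n$, so you must either let $\varepsilon\to 0$ with $n$ or run the Abel-summation argument with the exact weights $\E(a_m)$ over the full range, as Ben Arous and Dang do. With that adjustment, and the (asserted but standard) Feller-coupling control showing the off-diagonal covariance terms are $o(\log n)$, your argument delivers all four stated asymptotics.
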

See \cite{Bahier2017} for a proof of the two last asymptotic equalities.

\subsubsection*{Mesoscopic scale}
In \cite{Bahier2017}, the author of the present paper establishes the following result:

\begin{prop}\label{prop:Valentin}
Assume $I$ to be depending on $n$, of the form $I=I_n:=  \left(\mathrm{e}^{2i\pi \alpha} , \mathrm{e}^{2i\pi (\alpha + \delta_n)} \right]$, where $\alpha \in [0,1)$ and $(\delta_n)$ is a sequence of positive real numbers satisfying \[\left\{\begin{array}{l} 
\delta_n \underset{n\to \infty}{\longrightarrow} 0 \\
n \delta_n \underset{n\to \infty}{\longrightarrow} + \infty .
\end{array} \right.\]
Then, as $n \to +\infty$,
\begin{align*}
\E (X_n^I ) &= n \delta_n - \theta c_1 \log (n\delta_n) + o (\log (n\delta_n)) \\
\V (X_n^I ) &= \theta c_2 \log (n\delta_n) + o (\log (n\delta_n)) 
\end{align*}
and 
\begin{align*}
\E (\widetilde{X}_n^I ) &= n \delta_n \\
\V (\widetilde{X}_n^I ) &= \theta \ell \log (n \delta_n ) + o (\log (n \delta_n )),
\end{align*}
with, denoting by $\kappa$ any arbitrary irrational number, $c_1 = c_1 (\alpha, \kappa)$, $c_2 = c_2 (\alpha , \kappa )$, and $\ell = \ell (\kappa ) = \frac{1}{6}$.
\end{prop}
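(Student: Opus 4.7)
My approach rests on the cycle decomposition of permutation matrices combined with Ewens sampling theory. For a permutation $\sigma\in\mathfrak{S}_n$ with cycle counts $(C_j^{(n)})_{j\geq 1}$, the spectrum of $M_n$ consists of the $j$-th roots of unity with multiplicity $C_j^{(n)}$, so I would write
$$X_n^I = \sum_{j=1}^n C_j^{(n)}\,N_j(\alpha,\delta_n), \qquad N_j(\alpha,\delta_n) := \lfloor j(\alpha+\delta_n)\rfloor - \lfloor j\alpha\rfloor = j\delta_n + \{j\alpha\} - \{j(\alpha+\delta_n)\}.$$
For the modified model, each cycle of length $j$ contributes $j$ equally spaced points on $S^1$ rotated by an independent uniform angle, so conditionally on $(C_j^{(n)})$ such a cycle produces $\lfloor j\delta_n\rfloor + B_{j,c}$ eigenvalues in $I$ with independent $B_{j,c}\sim\mathrm{Bernoulli}(\{j\delta_n\})$. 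For the cycle counts themselves, the Feller coupling with independent Bernoullis $\xi_i\sim \mathrm{Bern}(\theta/(\theta+i-1))$ realizes each $C_j^{(n)}$ (up to boundary corrections) as the number of $j$-spacings between consecutive $1$'s, yielding $\E[C_j^{(n)}]\to \theta/j$ and providing quantitative control of variances and covariances.

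\textbf{Expectations.} I would use the deterministic identity $\sum_j jC_j^{(n)} = n$ to get $\E[\widetilde X_n^I] = \delta_n \sum_j j\,\E[C_j^{(n)}] = n\delta_n$ exactly, while
$$\E[X_n^I] = n\delta_n + \sum_{j=1}^n \E[C_j^{(n)}]\bigl(\{j\alpha\}-\{j(\alpha+\delta_n)\}\bigr).$$
Substituting $\E[C_j^{(n)}]\sim \theta/j$ reduces the correction to $\theta\sum_{j\leq n} j^{-1}(\{j\alpha\}-\{j(\alpha+\delta_n)\})$. A dyadic split around $j\sim 1/\delta_n$ should show that the range $j\lesssim 1/\delta_n$ contributes only $o(\log(n\delta_n))$ (there the fractional parts behave like $j\delta_n$ up to rare wrap-arounds), while in the range $j\gtrsim 1/\delta_n$ a Weyl-type equidistribution argument for the pair $(\{j\alpha\},\{j\delta_n\})$ replaces $\delta_n$ by an arbitrary irrational $\kappa$ and produces $-\theta c_1(\alpha,\kappa)\log(n\delta_n)+o(\log(n\delta_n))$.

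\textbf{Variances.} The modified case is transparent via the law of total variance: conditional independence across cycles gives
$$\V[\widetilde X_n^I] = \sum_{j=1}^n \E[C_j^{(n)}]\,\{j\delta_n\}(1-\{j\delta_n\}) + \delta_n^2\,\V\Bigl[\sum_j jC_j^{(n)}\Bigr],$$
and the second term vanishes by the deterministic identity $\sum_j jC_j^{(n)}=n$. Equidistribution of $\{j\delta_n\}$ on $[0,1)$ in the range $j\gtrsim 1/\delta_n$ gives the Cesàro mean $\int_0^1 x(1-x)\,dx = 1/6$, leading to $\V[\widetilde X_n^I]\sim (\theta/6)\log(n\delta_n)$, which is the desired constant $\ell=1/6$. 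The non-modified case requires expanding $N_j(\alpha,\delta_n)^2 = j^2\delta_n^2 - 2j\delta_n(\{j(\alpha+\delta_n)\}-\{j\alpha\}) + (\{j(\alpha+\delta_n)\}-\{j\alpha\})^2$ and combining with covariance estimates for the Ewens cycle counts; the macroscopic $O(n^2\delta_n^2)$ terms cancel using the constraint $\sum_j jC_j^{(n)}=n$, and what remains is $\theta\sum_{j\leq n} j^{-1}(\{j(\alpha+\delta_n)\}-\{j\alpha\})^2\sim \theta c_2(\alpha,\kappa)\log(n\delta_n)$.

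\textbf{Main obstacle.} The hardest step will be the variance of $X_n^I$ in the non-modified case: the naive expansion presents macroscopic $O(n^2\delta_n^2)$ contributions that must cancel through precise covariance estimates on $\Cov(C_i^{(n)},C_j^{(n)})$ obtained from the Feller coupling, and the surviving logarithmic order must then be identified with $c_2(\alpha,\kappa)$ via a uniform equidistribution estimate on the joint behaviour of $(\{j\alpha\},\{j\delta_n\})$ as $\delta_n\to 0$. This replacement of $\delta_n$ by an arbitrary irrational $\kappa$ is precisely the mesoscopic manifestation of the smoothing induced by the shrinking arc.
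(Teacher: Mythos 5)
You should first note that this paper contains no proof of Proposition~\ref{prop:Valentin}: it is quoted from \cite{Bahier2017} (with the remark that the expectation asymptotics can be deduced ``by the same method'' as the variance there), so the only possible comparison is with the general methodology of that cited work. Your plan is of the same kind (cycle decomposition, Ewens cycle counts and the Feller coupling, equidistribution of fractional parts), and several ingredients are sound: the deterministic identity $\sum_j jC_j^{(n)}=n$ gives $\E(\widetilde X_n^I)=n\delta_n$ exactly and removes the macroscopic term from both models for free --- so, contrary to your ``main obstacle'' paragraph, no covariance estimates are needed for that cancellation, since $X_n^I=n\delta_n-\sum_j C_j^{(n)}g_j$ with $g_j:=\{j(\alpha+\delta_n)\}-\{j\alpha\}$; conditioning on the cycle structure does give $\V(\widetilde X_n^I)=\sum_j\E(C_j^{(n)})\{j\delta_n\}(1-\{j\delta_n\})$ exactly; and splitting at $j\asymp 1/\delta_n$ is the right way to isolate the logarithmic term.

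As a proof, however, two steps are asserted rather than established, and they are the real content. First, the Poisson/Feller approximation must be quantitative and uniform: ``$\E[C_j^{(n)}]\to\theta/j$'' for fixed $j$ does not allow you to sum over $j\le n$; you need uniform bounds such as $\E(C_j^{(n)})\le c(\theta)/j$ together with an $L^2$-coupling estimate of the type $\E\bigl[\bigl(\sum_j(C_j^{(n)}-Z_j)a_j\bigr)^2\bigr]\le c(\theta)\Vert a\Vert_\infty^2$ with independent Poisson $Z_j$ of mean $\theta/j$ (the discrete counterpart of Lemma~\ref{lem:Feller} of this paper), in order to replace $\V\bigl(\sum_j C_j^{(n)}g_j\bigr)$ by $\theta\sum_{j\le n}g_j^2/j+\mathcal{O}_\theta(1)$. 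Second, the ``Weyl-type equidistribution argument for the pair $(\{j\alpha\},\{j\delta_n\})$'' is the heart of the matter and does not follow from Weyl's theorem: $\delta_n$ is an arbitrary null sequence with $n\delta_n\to\infty$ and has no Diophantine structure, so equidistribution of $\{j\delta_n\}$ or $\{j(\alpha+\delta_n)\}$ over dyadic blocks must be proved by a two-scale argument (on runs of length $m$ with $1\ll m\ll 1/\delta_n$ the variable $\{j\delta_n\}$ is essentially frozen while $\{j\alpha\}$ equidistributes along its orbit, uniformly in the location of the run; the frozen variable then equidistributes over blocks of length $\gg 1/\delta_n$ because its step $\delta_n$ tends to $0$), uniformly over the $\asymp\log(n\delta_n)$ blocks, to get block averages of $g_j$ and $g_j^2$ converging to $c_1$ and $c_2$. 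Note also that this produces the \emph{generic} constant, which coincides with $c_2(\alpha,\kappa)$ only when $1,\alpha,\kappa$ are $\Z$-linearly independent (taking $\kappa=\alpha$ irrational would give $0$), so the identification needs that caveat. With these two inputs supplied, the computation you sketch does yield the stated asymptotics, including $\ell=\int_0^1x(1-x)\,\mathrm{d}x=\frac16$.
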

In fact, the asymptotic of $\E (X_n^I )$ is not computed in \cite{Bahier2017} but can be deduced by the same method as for $\V (X_n^I )$.

Moreover, in both macroscopic and mesoscopic scales, the fluctuations of $X_n^I$ and $\widetilde{X}_n^I$ are asymptotically Gaussian (see \cite{Bahier2017}). 

In this paper we focus on the microscopic landspace of eigenvalues.

\subsubsection*{Microscopic scale}

A \textbf{virtual permutation} is defined as a sequence of permutation $\sigma = (\sigma_n)_{n\geq 1}$ where for all $n$, $\sigma_n \in \mathfrak{S}_n$ and $\sigma_n$ can be obtained from $\sigma_{n+1}$ by simply removing the element $n+1$ in the cycle-decomposition of $\sigma_{n+1}$. A remarkable property of the Ewens measures is that if $\sigma_{n+1}$ follows the Ewens$(\theta )$  distribution on $\mathfrak{S}_{n+1}$, then $\sigma_n$ follows the Ewens$(\theta )$  distribution on $\mathfrak{S}_n$, for every $\theta >0$. Consequently the Ewens measures naturally extend to the space of virtual permutations $\mathfrak{S}$. 

Let $\theta >0 $ and let $\sigma=(\sigma_n)_{n\geq 1}$ be a random virtual permutation following the Ewens$(\theta )$ distribution. 
For $n\geq 1$, let $\ell_{n,j}$ be the length of the $j$-th cycle of $\sigma_n$ in order of appearance (that is to say, in the increasing order of their smallest elements). We complete the sequence $(\ell_{n,j})_{j\geq 1}$ by zeros.
A result of Tsilevitch in \cite{tsilevich1997distribution} states that for all $j\geq 1$, as $n\to \infty$, 
\begin{equation}\label{eq:yj}
y_j^{(n)} := \frac{\ell_{n,j}}{n} \overset{a.s.}{\longrightarrow} y_j,
\end{equation}
where $(y_1, y_2 ,\dots )$ is a random vector following the GEM$(\theta )$ distribution. The rearrangement in decreasing order of the coordinates of a GEM$(\theta )$ vector follows the Poisson-Dirichlet distribution of parameter $\theta$ (PD$(\theta)$), and conversely a size-biased permutation of a PD$(\theta)$ vector has GEM$(\theta )$ distribution. \\
For all $j\geq 1$, $y_j$ has the same law as a product of independent Beta random variables (in the literature this representation of the GEM($\theta)$ distribution is called \textbf{stick breaking process}, or \textbf{residual allocation model}, see \emph{e.g.} \cite{kerov1997stick} and \cite{patil1977diversity}), and a direct calculation shows that there exist $r\in (0,1)$ depending on $\theta$ and independent on $j$, such that 
\begin{equation}\label{eq:yjrj}
\E (y_j ) \leq r^j.
\end{equation}

Now, a basic property on permutation matrices is that their eigenvalues are fully determined by the cycle-structure of their associated permutation. More precisely, each $j$-cycle of any arbitrary given permutation (once decomposed into disjoint cycles) corresponds to a set of eigenvalues equal to the set of $j$-th roots of unity. This supplies us the equalities in distribution
\begin{equation}
X_n^I = \sum_{j=1}^n \mathds{1}_{\ell_{n,j} >0} \sum_{w^{\ell_{n,j}} = 1} \mathds{1}_{w\in I} 
\end{equation}
and
\begin{equation}
\widetilde{X}_n^I = \sum_{j=1}^n \mathds{1}_{\ell_{n,j} >0} \sum_{w^{\ell_{n,j}} = \mathrm{e}^{2i\pi \Phi_{n,j}}} \mathds{1}_{w\in I}, 
\end{equation}
where the $\Phi_{n,j}$ are i.i.d random variables uniformly distributed on $[0,1)$, independent of the $\ell_{n,j}$.

Following the same approach as Najnudel and Nikeghbali in \cite{najnudel2013distribution}, since all the eigenvalues of (modified) permutation matrices are on the unit circle, it can be more practical to consider the eigenangles. The corresponding random measures $\tau (M_n)$ and $\tau (\widetilde{M}_n )$ can be written as
\begin{equation}
\tau (M_n) = \sum_{j=1}^{\infty} \mathds{1}_{\ell_{n,j} >0} \sum_{x \equiv 0 (\mathrm{mod}. \ 2\pi/\ell_{n,j} )} \delta_x
\end{equation}
and
\begin{equation}
\tau (\widetilde{M}_n ) = \sum_{j=1}^{\infty} \mathds{1}_{\ell_{n,j} >0} \sum_{x \equiv 2\pi \Phi_{n,j} (\mathrm{mod}. \ 2\pi/\ell_{n,j} )} \delta_x.
\end{equation}
In particular, this immediately implies that $\tau (M_n) ([0, 2\pi))= \tau (\widetilde{M}_n ) ([0, 2\pi)) = n$, in other words, the average spacing of two consecutive points of their respective corresponding point processes is equal to $2\pi/n$. Thus, if we want to have a convergence of these measures for $n$ going to infinity, we need to rescale them in order to have a constant average spacing, say, one. That is why we introduce the rescaled measures $\tau_n$ and $\widetilde{\tau}_n$, defined as the respective images of $\tau (M_n)$ and $\tau (\widetilde{M}_n )$ by multiplication by $n/2\pi$. One checks that
\begin{equation}
\tau_n = \sum\limits_{j=1}^{+\infty} \mathds{1}_{y_j^{(n)} >0} \sum\limits_{k\in \Z} \delta_{\frac{k}{y_j^{(n)}}}
\end{equation}
and 
\begin{equation}
\widetilde{\tau}_n = \sum\limits_{j=1}^{+\infty} \mathds{1}_{y_j^{(n)} >0} \sum\limits_{k\in \Z} \delta_{\frac{k+ \Phi_{n,j}}{y_j^{(n)}}}.
\end{equation}
Define also the random measures
\begin{equation}
\tau_\infty := \sum\limits_{j=1}^{+\infty} \sum\limits_{k\in \Z \setminus \{0 \}} \delta_{\frac{k}{y_j}} 
\end{equation}
and 
\begin{equation}
\widetilde{\tau}_\infty := \sum\limits_{j=1}^{+\infty} \sum\limits_{k\in \Z} \delta_{\frac{k + \Phi_j}{y_j}}
\end{equation}
where the $y_j$ are given by \eqref{eq:yj}, and the $\Phi_j$ are i.i.d random variables uniformly distributed on $[0,1)$, independent of the $y_j$.

\begin{prop}[Najnudel and Nikeghbali 2010 \cite{najnudel2013distribution}]
\label{prop:Najnudel}
For all continuous functions $f : \R \to \R$ with compact support included in $(0, +\infty )$, 
\[ <\tau_n , f> \overset{a.s.}{\underset{n\to +\infty}{\longrightarrow}}  <\tau_\infty  , f> \] 
under the coupling of virtual permutations, and 
\[ <\widetilde{\tau}_n , f> \overset{d}{\underset{n\to +\infty}{\longrightarrow}}  <\widetilde{\tau}_\infty  , f> .\] 
\end{prop}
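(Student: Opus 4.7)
The plan is to combine Tsilevich's convergence \eqref{eq:yj} with the compact support of $f$ and the mass conservation $\sum_j y_j^{(n)} = \sum_j y_j = 1$. Fix $f \in C(\R)$ with support in $[a,b] \subset (0,+\infty)$. A first useful observation is that the inner sum is genuinely finite: for each $j$ with $y_j^{(n)} > 0$, only $k \in \{\lceil a y_j^{(n)} \rceil, \ldots, \lfloor b y_j^{(n)} \rfloor\}$ contribute, hence at most $(b-a) y_j^{(n)} + 1$ nonzero terms; in particular, the sum vanishes whenever $y_j^{(n)} < 1/b$. The same bound holds for the $\Phi$-shifted version, and since $f(0)=0$ the exclusion of $k=0$ in the definition of $\tau_\infty$ has no effect.

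For the first (almost-sure) statement, I would truncate the outer $j$-sum at a level $J$. For the head, $y_j^{(n)} \to y_j > 0$ a.s.\ for each fixed $j$ by \eqref{eq:yj} (using that $y_j > 0$ a.s.\ from the stick-breaking description of GEM$(\theta)$), so continuity of $f$ and the finiteness observation yield termwise convergence $\sum_k f(k/y_j^{(n)}) \to \sum_k f(k/y_j)$. For the tail, using $\mathds{1}_{y_j^{(n)} \geq 1/b} \leq b\, y_j^{(n)}$,
\[ \Bigl|\sum_{j > J} \mathds{1}_{y_j^{(n)}>0} \sum_{k \in \Z} f(k/y_j^{(n)})\Bigr| \leq \|f\|_\infty (2b - a) \sum_{j > J} y_j^{(n)}. \]
Since $\sum_{j \leq J} y_j^{(n)} \to \sum_{j \leq J} y_j$ a.s.\ by termwise convergence of a finite sum, mass conservation forces $\sum_{j > J} y_j^{(n)} \to \sum_{j > J} y_j$ almost surely, and the limit is made arbitrarily small by choosing $J$ large enough (as $\sum_j y_j = 1$). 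Combining head and tail estimates completes the first statement.

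For the second statement I would rerun exactly the same argument with $f((k + \Phi_{n,j})/y_j^{(n)})$ in place of $f(k/y_j^{(n)})$; all size estimates are unchanged. Because the $\Phi_{n,j}$ are specified only in law independently for each $n$, I would couple them by setting $\Phi_{n,j} = \Phi_j$ uniformly in $n$; under this joint realization the argument above yields almost-sure convergence of $\langle \widetilde{\tau}_n, f\rangle$, which implies the claimed convergence in distribution for the original, uncoupled model.

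The part I expect to be the main obstacle is the interchange of limits, namely upgrading the termwise convergence $y_j^{(n)} \to y_j$ to a tail control on $\sum_{j > J} y_j^{(n)}$ uniformly in $n$. Here it is delivered for free by mass conservation, but the same strategy for non-Ewens deformations — where cycle lengths listed in order of appearance need not sum to $n$ in such a clean way — would instead require a uniform-in-$n$ version of the geometric bound \eqref{eq:yjrj}, which is arguably a more delicate input drawn from the finite-$n$ cycle structure and the compatibility of the Ewens$(\theta)$ measures under the projection $\mathfrak{S}_{n+1} \to \mathfrak{S}_n$.
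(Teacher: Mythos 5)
Your first statement (the almost sure convergence of $\langle \tau_n , f\rangle$) is proved correctly, and by a genuinely different route from the paper: the paper cites this proposition and proves its indicator-function analogue (Proposition~\ref{prop:limitcounting}) by dominating the whole series through the uniform geometric bound $\sup_m y_j^{(m)} \leq C\rho^j$ of Lemma~\ref{lem:expdecay}, whereas you exploit the compact support in $(0,+\infty)$ (only indices with $y_j^{(n)} \geq 1/b$ contribute, each with at most $(b-a)y_j^{(n)}+1$ lattice points) together with the mass identity $\sum_j y_j^{(n)} = 1 = \sum_j y_j$. For the unmodified process this is a clean, more elementary substitute for the submartingale/Borel--Cantelli input.

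For the second statement, however, there is a real gap: the assertion that \emph{the same bound holds for the $\Phi$-shifted version} and that \emph{all size estimates are unchanged} is false, because of the $k=0$ term. When $y_j^{(n)} < 1/b$ the shifted sum need not vanish: the point $\Phi_j / y_j^{(n)}$ lies in $[a,b]$ exactly when $\Phi_j \in [a\,y_j^{(n)}, b\,y_j^{(n)}]$, an event that occurs for arbitrarily small $y_j^{(n)}$ and whose indicator is not dominated pathwise by a multiple of $y_j^{(n)}$. Consequently your tail bound $\Vert f \Vert_\infty (2b-a) \sum_{j>J} y_j^{(n)}$ breaks, and mass conservation alone cannot control $\limsup_n \sum_{j>J} \mathds{1}_{\Phi_j \leq b\, y_j^{(n)}}$, since the supremum over $n$ and the sum over $j$ do not commute. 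To close the gap you need a uniform-in-$n$ control of $y_j^{(n)}$ in $j$: either the paper's Lemma~\ref{lem:expdecay}, which gives $\mathds{1}_{\Phi_j \leq b\, y_j^{(n)}} \leq \mathds{1}_{\Phi_j \leq bC\rho^j}$ and then Borel--Cantelli (using the independence of the $\Phi_j$ from the $y$'s) shows that almost surely only finitely many $j$ ever contribute, so your coupling argument then does yield a.s.\ convergence; or, more cheaply, the first-moment bound $\PP\bigl(\exists k:\ k+\Phi_j \in [a\,y_j^{(n)}, b\,y_j^{(n)}]\bigr) \leq (b-a)\,y_j^{(n)}$, which gives tail control in probability (via bounded convergence along your mass-conservation limit) and still suffices for the distributional claim through the standard approximation theorem, at the cost of restructuring the conclusion. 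Note also that your closing remark is exactly backwards: the geometric-type bound \eqref{eq:yjrj}/Lemma~\ref{lem:expdecay} (or an equivalent uniform-in-$n$ moment estimate) is already needed for the modified Ewens model treated here, not only for non-Ewens deformations.
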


%
%

In \cite{najnudel2013distribution} Najnudel and Nikeghbali tackle more general modifications of permutation matrices where the non-zero entries are $\C$-valued (not necessarily of modulus one, so that the matrices are no longer unitary). For the wreath product $S^1 \wr \mathfrak{S}_n$ they also consider more general distributions on $S^1$ (not necessarily the uniform distribution) and provide analog results on their limiting point processes of eigenvalues. 

In the present paper we will restrain ourselves to the study of the limiting point processes related to $(M_n)_{n\geq 1}$ and $(\widetilde{M}_n)_{n\geq 1}$, though the techniques are expected to extend to other ensembles of matrices involving permutations under Ewens measures.

\subsection{Main results and outline of the paper}

In the next section we establish that Proposition~\ref{prop:Najnudel} also holds for indicator functions of intervals. This gives a natural meaning to the convergence of the counting function of the microscopic eigenangles, to a limiting counting function. More precisely, we have the following result:

\begin{prop}\label{prop:limitcounting}
For all positive real numbers $\alpha$ and $\beta$ such that $\alpha<\beta$,
\[\tau_n ((\alpha , \beta ]) \overset{a.s}{\underset{n\to +\infty}{\longrightarrow}}  \tau_\infty ((\alpha , \beta ]) \]
under the coupling of virtual permutations, and   
\[\widetilde{\tau}_n ((\alpha , \beta ]) \overset{d}{\underset{n\to +\infty}{\longrightarrow}}  \widetilde{\tau}_\infty ((\alpha , \beta ]). \]
\end{prop}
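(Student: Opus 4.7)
The obstacle is that $\mathds{1}_{(\alpha,\beta]}$ is not continuous, so Proposition~\ref{prop:Najnudel} does not apply directly. My plan is the classical sandwich between two continuous, compactly supported bumps; the control of the residual reduces to showing that $\tau_\infty$ and $\widetilde{\tau}_\infty$ put no mass on $\{\alpha\}$ or $\{\beta\}$ almost surely, which I verify first. For $\tau_\infty$, an atom at a prescribed $x>0$ would force $y_j = k/x$ for some $j\geq 1$ and $k\geq 1$; the stick-breaking representation writes each $y_j$ as a product of independent $\mathrm{Beta}(1,\theta)$ variables, so $y_j$ has an absolutely continuous law, each such equation has probability zero, and a countable union bound yields $\PP(\tau_\infty(\{x\})>0)=0$. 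For $\widetilde{\tau}_\infty$, I would condition on $(y_j)_{j\geq 1}$: the event $(k+\Phi_j)/y_j = x$ becomes $\Phi_j \equiv x y_j \pmod 1$ for some $j$, of conditional probability zero by uniformity of $\Phi_j$.

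Next, fix $\delta \in (0, \min(\alpha, (\beta-\alpha)/2))$ and pick continuous piecewise-affine trapezoids $f_\delta^\pm : \R \to [0,1]$ supported in $[\alpha-\delta, \beta+\delta] \subset (0,+\infty)$, with $f_\delta^- \leq \mathds{1}_{(\alpha,\beta]} \leq f_\delta^+$, $f_\delta^- = 1$ on $[\alpha+\delta, \beta-\delta]$ and $f_\delta^+ = 1$ on $[\alpha,\beta]$. For the almost sure statement, applying Proposition~\ref{prop:Najnudel} simultaneously along $\delta = 1/m$, $m \geq 1$, I would obtain a single almost sure event on which, for every $m$,
\[\langle \tau_\infty, f_{1/m}^- \rangle \leq \liminf_n \tau_n((\alpha,\beta]) \leq \limsup_n \tau_n((\alpha,\beta]) \leq \langle \tau_\infty, f_{1/m}^+ \rangle.\]
As $m \to \infty$, dominated convergence on the locally finite measure $\tau_\infty$ identifies the two extremes with $\tau_\infty((\alpha,\beta))$ and $\tau_\infty([\alpha,\beta])$, both of which equal $\tau_\infty((\alpha,\beta])$ by the no-atom property, closing the sandwich.

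The hard part will be the distributional analogue for $\widetilde{\tau}_n$, since the sandwiching random variables cannot be coupled to $\widetilde{\tau}_n((\alpha,\beta])$ across $n$ in an almost sure sense. Setting $L^{(\delta)} := \langle \widetilde{\tau}_\infty, f_\delta^- \rangle$ and $U^{(\delta)} := \langle \widetilde{\tau}_\infty, f_\delta^+ \rangle$, for each fixed $\delta$ Proposition~\ref{prop:Najnudel} delivers $\langle \widetilde{\tau}_n, f_\delta^-\rangle \overset{d}{\to} L^{(\delta)}$ and $\langle \widetilde{\tau}_n, f_\delta^+\rangle \overset{d}{\to} U^{(\delta)}$. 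I would then combine the pathwise inequalities $\langle \widetilde{\tau}_n, f_\delta^-\rangle \leq \widetilde{\tau}_n((\alpha,\beta]) \leq \langle \widetilde{\tau}_n, f_\delta^+\rangle$ with the Portmanteau theorem applied to the closed half-line $(-\infty,t]$ for the upper bound and the open one $(-\infty,t)$ for the lower bound, obtaining, for every $t \in \R$,
\[\PP(U^{(\delta)} < t) \leq \liminf_n \PP(\widetilde{\tau}_n((\alpha,\beta]) \leq t) \leq \limsup_n \PP(\widetilde{\tau}_n((\alpha,\beta]) \leq t) \leq \PP(L^{(\delta)} \leq t).\]
Sending $\delta \downarrow 0$, the almost sure monotonicities $L^{(\delta)} \uparrow \widetilde{\tau}_\infty((\alpha,\beta))$ and $U^{(\delta)} \downarrow \widetilde{\tau}_\infty([\alpha,\beta])$, together with the no-atom property, push the outer probabilities to $\PP(\widetilde{\tau}_\infty((\alpha,\beta]) < t)$ and $\PP(\widetilde{\tau}_\infty((\alpha,\beta]) \leq t)$ respectively. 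Since $\widetilde{\tau}_\infty((\alpha,\beta])$ is integer-valued, these two bounds coincide at every non-integer $t$, which is enough to conclude convergence in distribution.
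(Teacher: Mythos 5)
Your proposal is correct, but it follows a genuinely different route from the paper. You treat Proposition~\ref{prop:Najnudel} as a black box and upgrade it from continuous test functions to the indicator $\mathds{1}_{(\alpha,\beta]}$ by a sandwich argument: trapezoidal bumps $f_\delta^\pm$, a no-atom lemma for $\tau_\infty$ and $\widetilde{\tau}_\infty$ at the deterministic endpoints (absolute continuity of the $y_j$ from the stick-breaking representation, respectively conditioning on the $y_j$ and using uniformity of the $\Phi_j$), then dominated/monotone convergence on the a.s.\ locally finite limit measures for the almost sure part, and Portmanteau plus integer-valuedness of the limit for the distributional part; all these steps check out (the only points worth making explicit are the a.s.\ local finiteness of $\tau_\infty$ and $\widetilde{\tau}_\infty$ near $[\alpha,\beta]$, which follows from $\sum_j y_j\leq 1$ or Lemma~\ref{lem:expdecay}, and the monotonicity in $\delta$ of the bump families used when letting $\delta\downarrow 0$). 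The paper instead proves the equivalent reformulation stated in Section~\ref{sec:limitcounting}: it writes $\tau_n((\alpha,\beta])$ and $\widetilde{\tau}_n((\alpha,\beta])$ as explicit series of fractional parts in the $y_j^{(n)}$, and proves convergence of those series directly, using the exponential decay of Lemma~\ref{lem:expdecay} to dominate the tail, the a.s.\ non-integrality of $y_j\alpha$, $y_j\beta$ for the finitely many remaining terms, and, for the modified matrices, convergence of conditional characteristic functions given $(y_j^{(m)})$ followed by two applications of dominated convergence. Your approach is softer and more generic (it would work verbatim for any limiting point process with no fixed atoms once convergence against continuous compactly supported functions is known); the paper's computational route buys more, namely the explicit representations $\tau_\infty((\alpha,\beta])=\beta-\alpha-\sum_j(\{y_j\beta\}-\{y_j\alpha\})$ and its analogue for $\widetilde{\tau}_\infty$, which are exactly the formulas the later sections (the definitions of $\widetilde{X}(A)$ and $X(s,t)$ in Sections~\ref{sec:limitMPM} and~\ref{sec:limitMP}) rely on; with your proof those identities would still have to be derived separately.
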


\begin{remarque}
It is easy to notice that the laws of the measures $\widetilde{\tau}_n$ and $\widetilde{\tau}_\infty$ are invariant by translation. Thus the second point of Proposition~\ref{prop:limitcounting} is equivalent to say that for all positive real numbers $A$, $\widetilde{\tau}_n ((0 , A ])\overset{d}{\underset{n\to +\infty}{\longrightarrow}}  \widetilde{\tau}_\infty ((0,A])$. Moreover, the choice of including or excluding the endpoints of the interval $(0,A]$ does not have importance for $\widetilde{\tau}_n$ since for all $x\in \R$, $\widetilde{\tau}_n (x)=0$ almost surely. This is clearly not true for $\tau_n$, but it can be proven that for all fixed $x>0$, $\tau_n (x) \rightarrow 0=\tau_\infty (x) $ almost surely as $n \to \infty$ under the coupling of virtual permutations. Indeed, $\tau_n (x) = \sum_{j \geq 1 : \ell_{n,j}>0} \mathds{1}_{x y_j^{(n)} \in \Z}$, so if $0<x<1$ we have $\tau_n (x)=0$ (since $y_j^{(n)} \in (0,1]$ for all $j$ such that $\ell_{n,j}>0$) and if $x\geq 1$ we have for all $j$,
\[\mathds{1}_{y_j^{(n)}>0 , x y_j^{(n)} \in \Z} \leq \mathds{1}_{x y_j^{(n)} \geq 1} \leq \mathds{1}_{\sup_{n} y_j^{(n)} \geq 1/x} \leq \mathds{1}_{C \rho^j \geq 1/x}\]
(see Lemma~\ref{lem:expdecay} for the last inequality) which is summable, and then by dominated convergence we get $\tau_n (x) \rightarrow 0$ a.s., since $xy_j \not \in \Z$ a.s. and then $\mathds{1}_{xy_j^{(n)} \in \Z} \to 0$ a.s. for each $j\geq 1$. \\
More generally, Proposition~\ref{prop:limitcounting} extends to finite numbers of intervals, which immediately implies that both convergences hold for finite combinations of indicator functions.
\end{remarque}

Now, we present our two main results, involving $\tau_\infty$ and $\widetilde{\tau}_\infty$:

\begin{theoreme}\label{thm:asympttautilde}
Let $A>1$. 
\[\frac{\widetilde{\tau}_\infty ([0,A]) - A}{\sqrt{\frac{\theta}{6} \log A}} \underset{A \to +\infty}{\overset{d}{\longrightarrow}} \mathcal{N}(0,1).\]
\end{theoreme}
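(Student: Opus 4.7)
The plan is to condition on the GEM sequence $(y_j)_{j\ge 1}$, express $\widetilde{\tau}_\infty([0,A])-A$ as a sum of conditionally independent variables, and combine L\'evy's continuity theorem with a concentration argument for the conditional variance. First, I observe that the number $N_j$ of points of the lattice $\{(k+\Phi_j)/y_j : k\in\Z\}$ lying in $[0,A]$ equals the number of integers in the interval $[-\Phi_j,\,Ay_j-\Phi_j]$, which has length $Ay_j$; since $\Phi_j$ is uniform on $[0,1)$ and independent of $(y_k)$, one gets $N_j=\lfloor Ay_j\rfloor+B_j$ where, conditionally on $(y_k)$, $B_j\sim\mathrm{Bernoulli}(\{Ay_j\})$ and the $(B_j)$ are independent across $j$. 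Using the a.s.\ identity $\sum_j y_j=1$ valid under GEM$(\theta)$, this yields
\[
\widetilde{\tau}_\infty([0,A])-A \;=\; \sum_{j\ge 1}Z_j,\qquad Z_j:=B_j-\{Ay_j\},
\]
with $\E[Z_j\mid(y_k)]=0$, $|Z_j|\le 1$, and conditional variance $\sigma_j^2:=\{Ay_j\}(1-\{Ay_j\})$. Set $S_A:=\sum_{j\ge 1}\sigma_j^2$.

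A second-order Taylor expansion of $\E[e^{itZ_j}\mid(y_k)]$, uniform for $|t|\le 1$, gives
\[
\log\E\!\left[e^{itZ_j}\mid(y_k)\right] \;=\; -\tfrac{t^2}{2}\sigma_j^2 + O\!\left(|t|^3\sigma_j^2\right),
\]
so the full conditional characteristic function equals $\exp\!\bigl(-\tfrac{t^2}{2}S_A + O(|t|^3 S_A)\bigr)$. Plugging in $t=s/\sqrt{(\theta/6)\log A}$, the error is $O(|s|^3/\sqrt{\log A})\to 0$ as soon as $S_A=O(\log A)$, so by bounded convergence and L\'evy's theorem the proof reduces to showing that $S_A/\bigl((\theta/6)\log A\bigr)\to 1$ in probability.

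The latter I would obtain through the first-moment (mean-measure) identity for the GEM$(\theta)$/PD$(\theta)$ point process, $\E\!\bigl[\sum_j g(y_j)\bigr]=\theta\int_0^1 g(x)\,x^{-1}(1-x)^{\theta-1}dx$. Applied to $g(x)=\{Ax\}(1-\{Ax\})$ and combined with the substitution $u=Ax$, it gives
\[
\E[S_A] \;=\; \theta\int_0^A\frac{\{u\}(1-\{u\})}{u}(1-u/A)^{\theta-1}du \;=\; \tfrac{\theta}{6}\log A + O(1),
\]
since $\{u\}(1-\{u\})$ is $1$-periodic with average $\tfrac{1}{6}$. For the variance of $S_A$ I would invoke the two-point correlation function of PD$(\theta)$, $\theta^2(x_1x_2)^{-1}(1-x_1-x_2)^{\theta-1}\mathds{1}_{x_1+x_2<1}$, and check through analogous manipulations that $\V(S_A)=O(\log A)$; Chebyshev's inequality then yields the required concentration.

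The main obstacle will be the variance bound for $S_A$. Indeed, PD$(\theta)$ is a Poisson process \emph{conditioned} on total mass $1$, so its two-point function is not of product form, and one must verify carefully that the cancellation between the second factorial moment and the squared first moment survives the conditioning and still produces only a logarithmic contribution. Once this estimate is in place, the remaining pieces are routine expansions, and the constant $\theta/6$ emerges transparently as the product of the periodic mean $\tfrac{1}{6}$ of $\{u\}(1-\{u\})$ with the intensity factor $\theta/u$, in perfect agreement with the mesoscopic scaling in Proposition~\ref{prop:Valentin}.
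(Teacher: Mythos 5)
Your first half coincides with the paper's: the same conditionally-Bernoulli decomposition $\widetilde{\tau}_\infty([0,A])-A=\sum_j(B_j-\{Ay_j\})$, the same uniform expansion of the conditional characteristic function, and the same reduction of the theorem to the statement that $S_A=\sum_j\{Ay_j\}(1-\{Ay_j\})$ satisfies $S_A/\log A\to\theta/6$ in probability. Where you genuinely diverge is in proving that concentration. The paper does it hands-on: it splits the sum according to $j\lessgtr(1\pm\varepsilon)\theta\log A$, uses the stick-breaking representation $\sum_{j>k}y_j\overset{d}{=}U_1\cdots U_k$ with i.i.d.\ Beta$(\theta,1)$ factors and the law of large numbers for $\log U_j$ to kill the tail, and for the bulk expands $\{x\}(1-\{x\})=\tfrac16-\tfrac{1}{2\pi^2}\sum_{k\neq0}e^{2i\pi kx}/k^2$ and shows $\E(e^{2i\pi kAy_j})=o(1)$ uniformly by an integration by parts, so that the first two moments of the truncated sum converge. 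You instead propose the first- and second-order correlation functions of PD$(\theta)$ plus Chebyshev; both your intensity $\theta x^{-1}(1-x)^{\theta-1}$ and your two-point density are correctly recalled, and your mean computation $\E[S_A]=\tfrac{\theta}{6}\log A+O(1)$ is sound (with a small check that the factor $(1-u/A)^{\theta-1}$ near $u=A$ only contributes $O(1)$ when $\theta<1$).

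The step you flag as the main obstacle is the only real gap, but it is fillable, and note that you only need $\V(S_A)=o((\log A)^2)$, not $O(\log A)$. Writing $g(x)=\{Ax\}(1-\{Ax\})$, the off-diagonal part of $\V(S_A)$ is $\theta^2\iint \frac{g(x_1)g(x_2)}{x_1x_2}\bigl[(1-x_1-x_2)_+^{\theta-1}-(1-x_1)^{\theta-1}(1-x_2)^{\theta-1}\bigr]\mathrm{d}x_1\mathrm{d}x_2$, and this is in fact $O(\log A)$: when $x_1,x_2\le 1/4$ the bracket is $O_\theta(x_1x_2)$ by the mean value theorem, so that region contributes $O(1)$; when, say, $x_2\ge 1/4$, bound $g(x_2)/x_2$ by a constant, integrate the two kernel terms in $x_2$ (each integral is $O_\theta(1)$ uniformly, including near the simplex boundary for $\theta<1$), and the remaining $\int_0^{1/4}g(x_1)x_1^{-1}\mathrm{d}x_1=O(\log A)$. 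Alternatively, you can bypass the conditioned-Poisson issue entirely with the paper's Section~4 coupling (Lemma~\ref{lem:Feller}): the GEM points and the scale-invariant Poisson process on $(0,1)$ differ by a set of bounded $L^2$ cardinality, so $\sqrt{\V(S_A)}$ differs by $O_\theta(1)$ from the Poisson case, where Campbell's theorem gives exactly $\theta\int_0^1 g(x)^2x^{-1}\mathrm{d}x=O(\log A)$. With either completion your route works; its advantage is that it avoids the Fourier-series and oscillatory-integral estimates of the paper, at the price of either the simplex-boundary analysis of the PD correlation kernels or an appeal to the coupling lemma.
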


\begin{theoreme}\label{thm:asympttau}
Let $a$ and $b$ be two positive real numbers such that $a<b$,
\begin{enumerate}[label=(\roman*)]
	\item As $b\to +\infty$,
	\[\E (\tau_\infty  ((a,a+b])) = b - \frac{\theta}{2} \log b + \mathcal{O}_\theta (1) \]
and 
\[\V (\tau_\infty  ((a,a+b])) =  \frac{\theta}{3} \log b + \mathcal{O}_\theta (\sqrt{\log b}) . \]
Moreover, 
\[\frac{\tau_\infty ((a,a+b]) - \E (\tau_\infty ((a,a+b]))}{ \sqrt{\V(\tau_\infty ((a,a+b]))}} \overset{d}{\longrightarrow} \mathcal{N}(0,1). \]
	\item Let $\nu$ be a real number greater than $1$. As $a\to +\infty$ and $b=(\nu -1) a$,
	\[\E (\tau_\infty  ((a,\nu a ]) = (\nu -1) a + \mathcal{O}_\theta (1) \]
and 
\[\V (\tau_\infty  ((a,\nu a])) = \left\{\begin{array}{ll}
\frac{\theta}{6} \left( 1 - \frac{1}{rs} \right) \log a + \mathcal{O}_\theta (\sqrt{\log a}) &\text{ if } \nu = \frac{r}{s} \text{ with } \mathrm{gcd}(r,s)=1 \\
\frac{\theta}{6} \log a + \mathcal{O}_\theta (\sqrt{\log a}) &\text{ if $\nu$ is irrational.}
\end{array}\right.
 \]
Moreover,
\[\frac{\tau_\infty ((a,\nu a]) - \E (\tau_\infty ((a,\nu a]))}{ \sqrt{\V(\tau_\infty ((a,\nu a]))}} \overset{d}{\longrightarrow} \mathcal{N}(0,1). \] 
\end{enumerate}
\end{theoreme}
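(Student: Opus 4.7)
The plan is to reduce $\tau_\infty((a, a+b])$ to a fluctuation sum by means of the identity
$$\tau_\infty((a, a+b]) = b + S(a, b), \qquad S(a, b) := \sum_{j \geq 1} \bigl(\{a y_j\} - \{(a+b) y_j\}\bigr),$$
which follows from $\sum_{j\geq 1} y_j = 1$ a.s.\ (a property of GEM$(\theta)$) and the almost sure irrationality of each $y_j$. An analogous identity $\tau_\infty((a, \nu a]) = (\nu - 1) a + S(a, (\nu - 1) a)$ handles part (ii), so the whole statement reduces to a moment analysis of $S$.

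Moments of any symmetric functional of $\{y_j\}_{j\geq 1}$ can be expressed via the explicit $k$-point correlation measures of the Poisson--Dirichlet$(\theta)$ configuration,
$$M_k(dx_1, \ldots, dx_k) = \theta^k \prod_{i=1}^k x_i^{-1} \cdot (1 - x_1 - \cdots - x_k)_+^{\theta - 1} \, dx_1 \cdots dx_k.$$
Writing $F(x) := \lfloor (a + b) x \rfloor - \lfloor a x \rfloor = b x + f(x)$ with $f(x) = \{a x\} - \{(a+b) x\}$, I would first check that the $b^2$- and $b$-level contributions to $\V \tau_\infty = \int F^2 \, dM_1 + \iint F(x) F(y) [dM_2 - dM_1 \, dM_1]$ cancel (using $\int x \, dM_1 = 1$ and the explicit form of $M_2$), so that the variance is driven by
$$\int_0^1 f(x)^2 \cdot \theta x^{-1} (1 - x)^{\theta - 1} \, dx + \mathcal{O}_\theta(1),$$
and similarly $\E S(a, b) = \int f \, dM_1$. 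The whole analysis then rests on the one-dimensional estimate
$$\int_0^1 \{t x\}^k \cdot \theta x^{-1} (1 - x)^{\theta - 1} \, dx = \frac{\theta}{k+1} \log t + \mathcal{O}_\theta(1) \qquad (k = 1, 2),$$
proved by splitting at $x = 1/t$ (below which $\{tx\} = tx$ contributes $\mathcal{O}(1)$, above which $\{tx\}^k$ averages to $1/(k+1)$) and estimating the remainder via a Fourier expansion of $\{\cdot\}^k - 1/(k+1)$ combined with Abel summation or Koksma's inequality.

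The new ingredient in part (ii) is the cross integral $\int \{ax\}\{\nu a x\} \, dM_1$. A direct computation over the fundamental period of the orbit $u \mapsto (u, \nu u) \bmod 1$ yields
$$\int_0^1 \{u\}\{\nu u\} \, du = \frac{1}{4} + \frac{1}{12 rs} \qquad \text{when } \nu = r/s \text{ in lowest terms,}$$
and $= 1/4$ when $\nu$ is irrational (2D Weyl equidistribution). Plugged into $\V \tau_\infty \sim \theta \log a \cdot \int_0^1 (\{u\} - \{\nu u\})^2 \, du$, this produces the announced constants $(1 - 1/(rs))/6$ and $1/6$, with error $\mathcal{O}_\theta(\sqrt{\log a})$ coming from the quantitative rate of equidistribution.

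For the central limit theorem I would use the method of cumulants: express the $k$-th cumulant of $\tau_\infty$ in terms of $M_1, \ldots, M_k$ (the polynomial-in-$b$ contributions cancel as for the variance) and show it is $\mathcal{O}_\theta(\log b)$ by the same Fourier-type estimates; since $\sigma^2 \sim \log b$, this yields $\kappa_k / \sigma^k \to 0$ for $k \geq 3$ and hence Gaussian convergence. I expect the main technical obstacle to be the rational case of part (ii): extracting the precise arithmetic constant $1 - 1/(rs)$ while controlling the error uniformly requires quantitative equidistribution of $(\{ax\}, \{\nu a x\})_{x \in (1/a, 1)}$ with $\mathcal{O}(\sqrt{\log a})$ error, together with showing that the discrepancy contribution from $\{\nu a x\}$ for rational $\nu$ scales compatibly with $a$.
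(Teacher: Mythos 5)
Your reduction to the fluctuation sum $S(a,b)$ is exactly the paper's starting point (there $X(s,t)=t-s-\sum_j(\{ty_j\}-\{sy_j\})$), and your analytic computations agree with the paper's: the one-dimensional estimates match Lemmas~\ref{lem:calcul}--\ref{lem:calcul3} and \ref{lem:transfo}, and for part (ii) the period-average value $\frac{1}{4}+\frac{1}{12rs}$ (hence $\frac16\left(1-\frac{1}{rs}\right)$) and the value $\frac16$ for irrational $\nu$ via two-dimensional equidistribution are correct (I checked the arithmetic; note only that the cross term must be the average over the fundamental period of length $s$, i.e.\ $\lim_T\frac1T\int_0^T$, not $\int_0^1$ literally). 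Where you genuinely diverge is the probabilistic engine. The paper never uses the correlation measures of PD$(\theta)$: it builds a continuous analogue of the Feller coupling (Section~\ref{sec:analogFeller}), coupling $\{y_j\}$ with $\mathcal{X}\cap(0,1)$ for the scale-invariant Poisson process $\mathcal{X}$ of intensity $\theta\,\mathrm{d}x/x$ so that the symmetric difference has second moment $\leq C(\theta)$ (Lemma~\ref{lem:Feller}); mean, variance and the CLT then come for free from Campbell's theorem (Theorem~\ref{thm:campbell}) and the explicit characteristic-function argument of Lemma~\ref{lem:gaussianT}, the coupling costing $\mathcal{O}_\theta(1)$ on the mean and $\mathcal{O}_\theta(\sqrt{\log})$ on the standard deviation. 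Your route (exact PD moment formulas plus cumulants) is in principle viable and would even give sharper error terms if completed, but it redoes by hand what the Poisson comparison trivializes.

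The genuine gap is that this replacement step is asserted rather than proved. For the variance you need $\iint f(x)f(y)\,[M_2-M_1\otimes M_1](\mathrm{d}x\,\mathrm{d}y)=\mathcal{O}_\theta(1)$ uniformly in $a,b$; this is true (the density $\theta^2(xy)^{-1}\left[(1-x-y)_+^{\theta-1}-(1-x)^{\theta-1}(1-y)^{\theta-1}\right]$ is absolutely integrable on $(0,1)^2$, the $1/(xy)$ singularity being cancelled at the corner), but you never state or verify it. More seriously, for the normality claims you need, for every $k\geq 3$, that the $k$-th cumulant of $\sum_j f(y_j)$ is $o((\log b)^{k/2})$; in your language this requires uniform-in-$(a,b)$ integrability bounds on all truncated (connected) correlation measures of PD$(\theta)$ tested against bounded functions, which is an estimate of a completely different nature from the one-dimensional Fourier/Koksma bounds you invoke, and it is exactly the heart of the matter (for the Poisson process all higher cumulants are the single integrals $\theta\int_0^1 f(x)^k\,\mathrm{d}x/x=\mathcal{O}(\log b)$, which is why the paper's coupling settles the CLT at once). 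Until those bounds are formulated and proved, the Gaussian limits in (i) and (ii) are not established. A smaller point: in the irrational case of (ii) you attribute the $\mathcal{O}_\theta(\sqrt{\log a})$ error to a quantitative equidistribution rate, but no such rate holds uniformly over irrational $\nu$ (Liouville numbers equidistribute arbitrarily slowly); the paper itself only proves $\frac{\theta}{6}\log a+o(\log a)$ there by a Ces\`aro-type squeeze, so you should either settle for $o(\log a)$ or supply a different argument for that error term.
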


\begin{remarque}
Note that Theorem~\ref{thm:asympttau} can be related to Propositions~\ref{prop:Wieand} and \ref{prop:Valentin}. In fact, the coefficients in the asymptotic expressions of the expectation and of the variance behave similarly, in the following sense:
\begin{itemize}
	\item Point $(i)$ is linked to the case of a macroscopic arc of the form $I=\left(\mathrm{e}^{2i\pi \alpha}, \mathrm{e}^{2i\pi \beta}\right]$ with $\alpha =0$ and $\beta$ irrational, and also to the case of a mesoscopic arc with the same $\alpha=0$ and replacing $\beta$ by $\delta_n$ (where $\delta_n$ decreases to $0$ slower than $1/n$ as $n$ goes to $\infty$). Indeed, a direct computation (see \cite{wieand2000eigenvalue}) of $c_1$ and $c_2$ gives $c_1 = \frac{1}{2}$ and $c_2=\frac{1}{3}$ for this particular case. 
	\item Point $(ii)$ is linked to the case of a macroscopic arc of the form $I=\left(\mathrm{e}^{2i\pi \alpha}, \mathrm{e}^{2i\pi \beta}\right]$ with $\alpha $ irrational and $\beta$ irrational, and also to the case of a mesoscopic arc with $\alpha$ irrational and $\beta = \alpha + \delta_n$. Indeed, a direct computation (see \cite{wieand2000eigenvalue} and \cite[Appendix B]{Bahier2017}) of $c_1$ and $c_2$ gives $c_1 = 0$ and 
	\[c_2=\left\{\begin{array}{ll}
	\frac{1}{6}\left(1 - \frac{1}{rs} \right) & \text{if } \beta = \frac{r}{s} \alpha  \text{ with } \mathrm{gcd}(r,s)=1 \text{ and } \frac{r}{s} >1  \\
	\frac{1}{6} & \text{if $\alpha$ and $\beta$ are $\Z$-linearly independent} .
	\end{array}\right. \]
\end{itemize}
\end{remarque}

The empirical measures $\tau_\infty$ and $\widetilde{\tau}_\infty$ are related to each other by the following special link:

\begin{prop}\label{prop:transla}
Let $f \in \mathcal{C} (\R , \C)$ with compact support. Let $A>0$. Then
\[< \tau_\infty \circ T_A , f > \overset{d}{\underset{A\to +\infty}{\longrightarrow}} < \widetilde{\tau}_\infty , f >, \]
where $T_A$ is the shift operator defined by $T_A : x \mapsto x+A$.
\end{prop}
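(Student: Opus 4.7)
The strategy is to give both sides of the convergence a common representation. Fix $f\in\mathcal{C}(\R,\C)$ with support in $[-R,R]$, and let $A>R$. Reindexing $\langle\tau_\infty\circ T_A,f\rangle=\sum_{j}\sum_{k\neq 0}f(k/y_j+A)$ via $k=-(\lfloor Ay_j\rfloor+\ell)$ yields
\[\langle\tau_\infty\circ T_A,f\rangle=\sum_{j=1}^{+\infty}g_j(\{Ay_j\}), \qquad \langle\widetilde\tau_\infty,f\rangle=\sum_{j=1}^{+\infty}g_j(\Phi_j),\]
where $g_j(\phi):=\sum_{\ell\in\Z}f((\phi-\ell)/y_j)$ is $1$-periodic in $\phi$ and has only finitely many nonzero terms. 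The proof thus reduces to showing that replacing the ``deterministic'' phases $\{Ay_j\}$ by the independent uniform $\Phi_j$ does not affect the limiting distribution.

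The core step is a joint equidistribution: for every fixed $N\geq 1$,
\[\bigl((y_j)_{j\leq N},\,(\{Ay_j\})_{j\leq N}\bigr)\xrightarrow[A\to+\infty]{d}\bigl((y_j)_{j\leq N},\,(\Phi_j)_{j\leq N}\bigr),\]
where the $\Phi_j$ are i.i.d.\ uniform on $[0,1)$, independent of $(y_j)$. I would prove this by computing the mixed characteristic function on $\R^N\times(\R/\Z)^N$: since $e^{2\pi i t_j\{Ay_j\}}=e^{2\pi i t_j Ay_j}$ for $t_j\in\Z$, its value at $(s,t)\in\R^N\times\Z^N$ equals
\[\E\!\left[\exp\!\Bigl(i\sum_{j\leq N}(s_j+2\pi A t_j)y_j\Bigr)\right].\]
For $t=0$ this is the characteristic function of $(y_j)_{j\leq N}$; for $t\neq 0$, the random variable $\sum_j t_j y_j$ is a nondegenerate integer combination of the GEM$(\theta)$ vector, which admits a density on its simplex, hence is absolutely continuous, so Riemann--Lebesgue gives that the expression tends to $0$. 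This matches the characteristic function of the target law. Since $(y,\phi)\mapsto \sum_\ell f((\phi-\ell)/y)$ is continuous and bounded on $(0,1]\times(\R/\Z)$, the continuous mapping theorem yields $\sum_{j\leq N}g_j(\{Ay_j\})\xrightarrow{d}\sum_{j\leq N}g_j(\Phi_j)$ for every $N$.

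The remaining ingredient is a tail estimate. For $\widetilde\tau_\infty$, Fubini together with the $1$-periodicity gives $\E[g_j(\Phi_j)\mid y_j]=y_j\|f\|_{L^1}$, so by \eqref{eq:yjrj}, $\sum_{j>N}\E|g_j(\Phi_j)|\leq\|f\|_{L^1}\,r^{N+1}/(1-r)$, hence $\sum_{j>N}g_j(\Phi_j)\to 0$ in $L^1$. The main obstacle is a matching uniform-in-$A$ bound for $\sum_{j>N}g_j(\{Ay_j\})$: the crude estimate $\E[g_j(\{Ay_j\})]\leq\|f\|_\infty(2R\,\E[y_j]+1)$ fails to sum. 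I would overcome this via Poisson summation: reducing first to $f\in\mathcal{C}_c^\infty(\R)$ (a dense class for the topology involved),
\[g_j(\{Ay_j\})=y_j\|f\|_{L^1}+y_j\sum_{n\neq 0}e^{-2\pi inAy_j}\hat f(-ny_j),\]
so that $\sum_{j>N}g_j(\{Ay_j\})=\|f\|_{L^1}\sum_{j>N}y_j+\sum_{j>N}y_j\sum_{n\neq 0}e^{-2\pi inAy_j}\hat f(-ny_j)$. The first piece is small in $L^1$ (since $\sum_j y_j=1$ a.s.\ and $\E[y_j]\leq r^j$), while the oscillatory remainder is controlled, uniformly in $A$, by the rapid decay of $\hat f$ combined with Riemann--Lebesgue cancellations. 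A standard $3\varepsilon$-approximation, combining the convergence of the truncated sums with this uniform tail bound, then concludes the proof.
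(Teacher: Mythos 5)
Your overall architecture coincides with the paper's: the finite-dimensional lemma $\bigl((y_j)_{j\leq N},(\{Ay_j\})_{j\leq N}\bigr)\to\bigl((y_j)_{j\leq N},(\Phi_j)_{j\leq N}\bigr)$ proved by Riemann--Lebesgue, the continuous-mapping step for the truncated sums, the easy tail bound on the $\widetilde\tau_\infty$ side, and a $3\varepsilon$ assembly. The genuine gap is exactly at the point you flag as "the main obstacle": the uniform-in-$A$ control of $\sum_{j>N}g_j(\{Ay_j\})$. Your proposed mechanism (Poisson summation, then "rapid decay of $\hat f$ combined with Riemann--Lebesgue cancellations") is not substantiated and, as stated, does not work. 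First, no absolute bound can help: as $y_j\to0$ one has $y_j\sum_{n\neq0}\vert\hat f(ny_j)\vert\to\int\vert\hat f\vert>0$ (a Riemann sum), so the oscillatory remainder is not small termwise and summation over $j>N$ diverges; any proof must extract cancellation jointly in $n$ and in the law of $y_j$. Second, Riemann--Lebesgue is a qualitative statement giving decay as $A\to\infty$ for each \emph{fixed} pair $(j,n)$, with no uniformity over the infinitely many pairs involved and no dominating summable majorant, so you cannot interchange the limit in $A$ with the sums; moreover what is needed here is smallness as $N\to\infty$ \emph{uniformly in $A$}, which Riemann--Lebesgue cannot produce by itself. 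Third, in the regime $y_j\lesssim 1/A$ (which is where most indices $j>N$ sit) the phases $nAy_j$ do not oscillate at all for the relevant frequencies, so there is no Riemann--Lebesgue effect available there; in that regime the tail is harmless only because $g_j(\{Ay_j\})$ vanishes identically once $y_j<1/(A+R)$, an observation your sketch does not use. The truly delicate range is $y_j\in[1/(A+R),\varepsilon]$, where the required estimate is equivalent to showing that, with probability close to $1$ uniformly in $A$, no $y_j$ with $j>N$ falls in $\bigcup_{k\geq1}\left[\frac{k}{A+R},\frac{k}{A-R}\right]$. This is precisely what the paper proves, and it is the heart of its argument: it writes $(y_j)_{j\geq j_0}\overset{d}{=}(Py_j)_{j\geq2}$ with $P=U_2\cdots U_{j_0-1}$, compares the tail of the GEM sequence with the scale-invariant Poisson process $\mathcal{X}$ of intensity $\theta\,\mathrm{d}x/x$ (Lemmas~\ref{lem:equalGEM} and~\ref{lem:spacing}), and bounds the offending probability by $1-\E\bigl(\exp(-6M\theta P)\bigr)\to0$ as $j_0\to\infty$, uniformly in $A$. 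Your Fourier route would have to reprove a quantitative statement of this strength, and nothing in the sketch indicates how.

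A secondary, smaller gap: the reduction to $f\in\mathcal{C}_c^\infty$ is not free. To transfer convergence in distribution from smooth test functions to a general continuous $f$ with compact support you need a bound, uniform in $A$, on $\E\bigl(\tau_\infty((A-R,A+R])\bigr)$ (tightness of the number of points of $\tau_\infty\circ T_A$ in a fixed window), and the crude estimate $\sum_j\PP(y_j\geq1/(A+R))$ is of order $\theta\log A$, not $\mathcal{O}(1)$; making this step rigorous again requires the quantitative machinery of Sections~\ref{sec:analogFeller}--\ref{sec:limitMP} (or an equivalent argument), so it cannot simply be invoked as density of a "nice class".
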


The paper follows a linear structure: In Section~\ref{sec:limitcounting} we motivate the study of the considered limiting objects and give a proof of Proposition~\ref{prop:limitcounting}. In Section~\ref{sec:limitMPM} we prove Theorem~\ref{thm:asympttautilde}. In Section~\ref{sec:analogFeller}, we introduce a main tool that we use in Section~\ref{sec:limitMP} for proving Theorem~\ref{thm:asympttau}. This tool is an analog of the ubiquitous Feller coupling, and has interest beyond our study.
Finally, in Section~\ref{sec:transla} we prove Proposition~\ref{prop:transla}.

\section{Two natural limiting counting functions. Proof of Proposition~\ref{prop:limitcounting}}
\label{sec:limitcounting}

We begin with the following lemma:

\begin{lem}\label{lem:expdecay}
There exist $\rho \in (0,1)$ depending on $\theta$, and a random number $C>0$ such that a.s., for all $j\geq 1$, 
\begin{equation}
s_j:=\sup_{m\geq 1} y_j^{(m)} \leq C \rho^j. 
\end{equation}
\end{lem}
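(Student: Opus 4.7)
The plan is to show that $\PP(s_j \geq \rho^j)$ is summable in $j$ for some $\rho \in (0,1)$, after which the Borel--Cantelli lemma yields $s_j \leq \rho^j$ for all but finitely many $j$ almost surely, and absorbing the finitely many exceptional terms into the constant gives $s_j \leq C\rho^j$ for all $j\geq 1$ with some finite random $C$. Let $T_j := \min\{n \geq 1 : \sigma_n \text{ has at least } j \text{ cycles}\}$ be the birth time of the $j$-th cycle of the virtual permutation in order of appearance; one has $y_j^{(n)} = 0$ for $n < T_j$ and $y_j^{(T_j)} = 1/T_j$, so $s_j = \sup_{n \geq T_j} y_j^{(n)}$.

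The key structural observation is that $(y_j^{(n)})_{n \geq T_j}$ is a nonnegative supermartingale with respect to the natural filtration $(\mathcal{F}_n)$. Using the Chinese restaurant description of Ewens$(\theta)$ virtual permutations --- the $(n+1)$-th element joins the $k$-th existing cycle with probability $\ell_{n,k}/(n+\theta)$ and opens a new one with probability $\theta/(n+\theta)$ --- a short computation gives
\[ \E[y_j^{(n+1)} \mid \mathcal{F}_n] = y_j^{(n)}\left(1 - \frac{\theta}{(n+1)(n+\theta)}\right) \leq y_j^{(n)}. \]
Doob's maximal inequality, applied conditionally on the stopping time $T_j$, then yields, for every $\lambda > 0$,
\[ \PP(s_j \geq \lambda) \leq \E[1/T_j]/\lambda. \]

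It remains to show that $\E[1/T_j]$ decays exponentially in $j$. For this I would use the Feller-coupling representation in which $T_j$ equals the position of the $j$-th success in a sequence $(\xi_i)_{i\geq 1}$ of independent Bernoulli variables with $\PP(\xi_i=1)=\theta/(i-1+\theta)$. Since $\E[S_n] \sim \theta \log n$ for $S_n := \sum_{i=1}^n \xi_i$, a standard Chernoff bound on $\PP(S_n \geq j)$ at the scale $n \asymp e^{j/(2\theta)}$ gives $\PP(T_j \leq n) \leq e^{-cj}$ for some $c>0$; combined with the trivial inequality $1/T_j \leq 1/n$ on $\{T_j > n\}$, this yields $\E[1/T_j] \leq Ce^{-\alpha j}$ for some $\alpha > 0$. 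Fixing any $\rho \in (e^{-\alpha},1)$ then gives $\sum_j \PP(s_j \geq \rho^j) < \infty$, closing the argument. The only non-routine step is the Chernoff estimate on $\E[1/T_j]$; the supermartingale/Doob step and the Borel--Cantelli conclusion are essentially formal, and it is noteworthy that while \eqref{eq:yjrj} gives exponential decay of $\E[y_j]$ only, the present argument upgrades this to a uniform-in-$n$ control of $y_j^{(n)}$.
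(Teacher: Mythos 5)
Your proof is correct, but it takes a genuinely different route from the paper's. The paper works with the submartingale $\bigl(\ell_{N,j}/(N+\theta)\bigr)_{N\geq 1}$, applies Doob's $L^2$ inequality to get $\E(s_j^2)\leq 4(1+\theta)^2\E(y_j)$, and then invokes the stick-breaking bound \eqref{eq:yjrj}, $\E(y_j)\leq r^j$, before finishing with Markov and Borel--Cantelli exactly as you do. You instead exploit that $y_j^{(n)}=\ell_{n,j}/n$ is itself a nonnegative supermartingale after the birth time $T_j$; your one-step computation is right, since $\E[y_j^{(n+1)}\mid\mathcal{F}_n]=y_j^{(n)}\frac{n(n+\theta+1)}{(n+1)(n+\theta)}\leq y_j^{(n)}$ on $\{n\geq T_j\}$, and the maximal inequality applied conditionally on $\mathcal{F}_{T_j}$ (where the process starts at $1/T_j$) legitimately reduces everything to $\E[1/T_j]$, because the supermartingale inequality holds pointwise on $\{n\geq T_j\}$. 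The exponential decay of $\E[1/T_j]$ then follows as you say: what you call the Feller coupling is, in this setting, the Chinese-restaurant description of the virtual permutation, in which the indicator that element $i$ opens a new cycle is Bernoulli of parameter $\theta/(\theta+i-1)$, independent of the past, and the $j$-th cycle in order of appearance is born at the $j$-th success; a Chernoff/Poisson-type tail bound at $n\asymp \mathrm{e}^{j/(2\theta)}$ indeed gives $\PP(T_j\leq n)\leq C\mathrm{e}^{-cj}$, hence $\E[1/T_j]\leq C'\mathrm{e}^{-\alpha j}$, and Markov plus Borel--Cantelli (absorbing finitely many indices into the random constant, using $s_j\leq 1$) closes the argument just as in the paper. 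What the paper's route buys is brevity: it needs no control of birth times, only the already-stated bound \eqref{eq:yjrj} and an $L^2$ maximal inequality. What your route buys is independence from \eqref{eq:yjrj} and from the GEM/stick-breaking representation: the exponential decay is extracted directly from the independent Bernoulli structure of cycle births, at the modest extra cost of the Chernoff estimate, which is the only step you left at the level of a sketch but which is standard for sums of independent (non-identically distributed) Bernoulli variables.
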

\begin{proof}
First, it can be checked that for all $j$, the sequence $\left( \frac{\ell_{N,j}}{N+\theta} \right)_{N\geq 1}$ is a submartingale with respect to the filtration $(\mathcal{F}_N)$ (see \emph{e.g.} \cite{tsilevich1997distribution} for a proof), where $\mathcal{F}_N$ is the $\sigma$-algebra generated by $(\ell_{p,q}, 1\leq p \leq N, q \leq p)$. Moreover, as this submartingale is positive and bounded in $L^2$ (clear since the terms are bounded by $1$), then it follows from Doob's inequality 
\[\E \left( \left(\sup_{N \geq 1 } \frac{\ell_{N,j}}{N+\theta} \right)^2 \right) \leq 4 \sup_{N \geq 1 } \E  \left( \left( \frac{\ell_{N,j}}{N+\theta} \right)^2 \right) \]   
and then, since $\frac{\ell_{N,j}}{N+\theta}$ is lower than $1$, 
\begin{align*}
\E (s_j^2) &\leq 4 (1+\theta)^2 \sup_{N \geq 1 } \E  \left( \frac{\ell_{N,j}}{N+\theta}  \right)  \\
	&= 4  (1+\theta)^2 \lim_{N \to +\infty } \E  \left( \frac{\ell_{N,j}}{N+\theta}  \right) \\
	&\leq 4 (1+\theta)^2 \lim_{N \to +\infty } \E (y_j^{(N)}) \\
	&= 4 (1+\theta)^2 \E ( \lim_{N \to +\infty } y_j^{(N)} ) \\ 
	&= 4 (1+\theta)^2 \E (y_j) \\
	&\leq 4 (1+\theta)^2 r^j.
\end{align*}
where we use the submartingale property for the first equality, the dominated convergence theorem for the second and third equalities, and \eqref{eq:yjrj} for the last inequality. Moreover, using Cauchy-Schwarz inequality we deduce $\E (s_j) \leq 2 (1+ \theta) r^{j/2}$, and finally $\rho:=\frac{1+r^{1/2}}{2}\in (0,1)$ gives
\[\PP( s_j > \rho^j) \leq \frac{1}{\rho^j} \E (s_j) \leq 2(1+\theta) \left(\frac{r^{1/2}}{\rho}\right)^j\]
which is summable in $j$, therefore Borel-Cantelli lemma applies.
\end{proof}

Let $\alpha$ and $\beta$ two real numbers such that $0\leq \alpha<1$ and $\alpha<\beta \leq \alpha+1$.
For all $n$, the random numbers $X_n^I$ and $\widetilde{X}_n^I$ of eigenvalues of $M_n$ and $\widetilde{M}_n$ lying in the arc $I \left(  \mathrm{e}^{2i\pi \alpha} , \mathrm{e}^{2i\pi\beta} \right]$ are given by the following expressions (see \cite{wieand2000eigenvalue}):
\begin{align*}
X_n^I &= \sum_{j=1}^n \mathds{1}_{\ell_{n,j} >0} (\lfloor \ell_{n,j}  \beta \rfloor -  \lfloor \ell_{n,j}  \alpha \rfloor ) \\
	&= n(\beta - \alpha) - \sum_{j=1}^n \mathds{1}_{\ell_{n,j} >0} (\lbrace \ell_{n,j}  \beta \rbrace -  \lbrace\ell_{n,j}  \alpha \rbrace ) 
\end{align*}
and 
\begin{align*}
\widetilde{X}_n^I &= \sum_{j=1}^n \mathds{1}_{\ell_{n,j} >0} (\lfloor \ell_{n,j}  \beta - \Phi_{n,j} \rfloor -  \lfloor \ell_{n,j}  \alpha - \Phi_{n,j} \rfloor ) \\
	&= n(\beta - \alpha) - \sum_{j=1}^n \mathds{1}_{\ell_{n,j} >0} \left( \lbrace \ell_{n,j}  \beta \rbrace -  \lbrace\ell_{n,j}  \alpha \rbrace   - \mathds{1}_{\Phi_{n,j}\leq \lbrace\ell_{n,j}  \beta \rbrace }  +\mathds{1}_{\Phi_{n,j}\leq \lbrace\ell_{n,j}  \alpha \rbrace }   \right) 
\end{align*}
where $(\Phi_{n,j})_{n,j\geq 1}$ is an array of i.i.d random variables uniformly distributed on $[0,1)$, independent of $(\sigma_n)_{n\geq 1}$. \\
If we replace $\alpha$ and $\beta$ respectively by $\alpha/n$ and $\beta/n$, we get
\[X_n^{\left(  \mathrm{e}^{2i\pi \alpha/n} , \mathrm{e}^{2i\pi\beta/n} \right]} = \beta - \alpha - \sum_{j=1}^n \mathds{1}_{y_j^{(n)} >0} (\lbrace y_j^{(n)}  \beta \rbrace -  \lbrace y_j^{(n)} \alpha \rbrace ) \]
and 
\[ \widetilde{X}_n^{\left(  \mathrm{e}^{2i\pi \alpha/n} , \mathrm{e}^{2i\pi\beta/n} \right]} =  \beta - \alpha - \sum_{j=1}^n \mathds{1}_{y_j^{(n)} >0} \left( \lbrace y_j^{(n)}  \beta \rbrace -  \lbrace y_j^{(n)}  \alpha \rbrace   - \mathds{1}_{\Phi_{n,j}\leq \lbrace y_j^{(n)} \beta \rbrace }  +\mathds{1}_{\Phi_{n,j}\leq \lbrace y_j^{(n)} \alpha \rbrace }  \right). \]
From this it seems reasonable to consider the version $n=\infty$ of these quantities, in order to count the points of the limiting point process obtained as the limit of the sequence of eigenangles multiplied by $n/2\pi$ (microscopic scale). The following proposition gives a meaning to the convergence.

\begin{prop}
We have the following convergences:
\begin{equation}
X_n^{\left(  \mathrm{e}^{2i\pi \alpha/n} , \mathrm{e}^{2i\pi\beta/n} \right]} \overset{\text{a.s.}}{\underset{n\to\infty}{\longrightarrow}} \beta - \alpha - \sum_{j=1}^{+\infty} (\lbrace y_j  \beta \rbrace -  \lbrace y_j \alpha \rbrace )
\end{equation}
under the coupling of virtual permutations, and
\begin{equation}
\widetilde{X}_n^{\left(  \mathrm{e}^{2i\pi \alpha/n} , \mathrm{e}^{2i\pi\beta/n} \right]} \underset{n\to\infty}{\overset{d}{\longrightarrow}} \beta - \alpha - \sum_{j=1}^{+\infty} \left( \lbrace y_j \beta \rbrace -  \lbrace y_j  \alpha \rbrace   - \mathds{1}_{\Phi_j \leq \lbrace y_j \beta \rbrace }  +\mathds{1}_{\Phi_j\leq \lbrace y_j \alpha \rbrace }  \right)
\end{equation}
where the $\Phi_j$ are i.i.d random variables uniformly distributed on $[0,1)$, independent of the $y_j^{(n)}$, the $y_j$ and the $\Phi_{n,j}$.
\end{prop}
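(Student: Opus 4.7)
\emph{Plan.} My approach is to truncate both sums at some level $J$, handle the head $\sum_{j \leq J}$ by a termwise continuity argument, and control the tail $\sum_{j > J}$ uniformly in $n$ via the exponential decay of $y_j^{(n)}$ provided by Lemma~\ref{lem:expdecay}. As in Proposition~\ref{prop:limitcounting}, I will assume $0 < \alpha < \beta$.

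\emph{Step 1 (first convergence, a.s.).} Working under the coupling of virtual permutations, I fix $\omega$ in the full-measure event on which: $y_j^{(n)} \to y_j$ for every $j$ (Tsilevich, eq.~\eqref{eq:yj}); Lemma~\ref{lem:expdecay} provides $C = C(\omega)$ and $\rho \in (0,1)$ with $s_j \leq C\rho^j$; each $y_j$ is positive and $y_j\alpha, y_j\beta \notin \Z$ (which holds a.s.\ by absolute continuity of the laws of the $y_j$). For each fixed $j$, continuity of the fractional part at non-integer points yields
\[ \mathds{1}_{y_j^{(n)}>0}\bigl( \lbrace y_j^{(n)}\beta\rbrace - \lbrace y_j^{(n)}\alpha\rbrace \bigr) \underset{n\to\infty}{\longrightarrow} \lbrace y_j\beta\rbrace - \lbrace y_j\alpha\rbrace. \]
To exchange limit and summation, I pick $J_0 = J_0(\omega)$ with $C\rho^{J_0}\beta < 1$; then for every $j > J_0$ and every $n$, $0 < y_j^{(n)}\alpha < y_j^{(n)}\beta < 1$, no wrap-around occurs, and
\[ \lbrace y_j^{(n)}\beta\rbrace - \lbrace y_j^{(n)}\alpha\rbrace = y_j^{(n)}(\beta-\alpha) \leq C(\omega)(\beta-\alpha)\rho^j, \]
which is summable uniformly in $n$. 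The same bound applies to the limiting series. A three-$\varepsilon$ argument then gives the a.s.\ convergence.

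\emph{Step 2 (second convergence, in distribution).} I enlarge the probability space to couple the auxiliary uniforms across $n$: I set $\Phi_{n,j} := \Phi_j$ for every $n, j$, where $(\Phi_j)_{j \geq 1}$ is an i.i.d.\ uniform sequence on $[0,1)$ independent of $(\sigma_n)_{n \geq 1}$. For every fixed $n$, the marginal joint law of $(y_j^{(n)}, \Phi_{n,j})_{j\geq 1}$ is unchanged, so this coupling preserves the distribution of $\widetilde{X}_n^{(\mathrm{e}^{2i\pi\alpha/n},\mathrm{e}^{2i\pi\beta/n}]}$; it therefore suffices to establish an a.s.\ convergence on the enlarged space. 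Termwise convergence at index $j$ holds once $\Phi_j \notin \lbrace \lbrace y_j\alpha\rbrace, \lbrace y_j\beta\rbrace\rbrace$, which is a.s.\ true by independence of $\Phi_j$ from $(y_j)$ and its absolute continuity. For the tail, in the no-wrap regime $j > J_0$ the indicator correction reduces to $\pm \mathds{1}_{\Phi_j \in (y_j^{(n)}\alpha, y_j^{(n)}\beta]}$, which is dominated by the $n$-free event $\lbrace \Phi_j \leq C\rho^j\beta\rbrace$. Since $\sum_{j \geq 1} \PP(\Phi_j \leq C\rho^j\beta \mid C) = \tfrac{C\beta\rho}{1-\rho} < \infty$ a.s., Borel--Cantelli guarantees that this event occurs only finitely often a.s., so the tail indicators vanish for $j$ large enough, uniformly in $n$.

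\emph{Main obstacle.} The delicate point is the tail estimate in the second convergence: a naive pointwise bound on the indicator difference $\mathds{1}_{\phi \leq \lbrace y\beta\rbrace} - \mathds{1}_{\phi \leq \lbrace y\alpha\rbrace}$ only gives $1$ in absolute value, which is not summable. The resolution is to use the exponential decay of $y_j^{(n)}$ furnished by Lemma~\ref{lem:expdecay} to trap each such indicator inside the $n$-independent event $\lbrace \Phi_j \leq C\rho^j\beta\rbrace$, whose expected size is summable, and then invoke Borel--Cantelli. Everything else reduces to continuity of the fractional part and dominated convergence.
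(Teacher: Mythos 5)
Your proof is correct, and the two halves compare differently with the paper. For the almost sure convergence your argument is essentially the paper's: the same head/tail split at a random $J_0$ furnished by Lemma~\ref{lem:expdecay}, the observation that no wrap-around occurs in the tail so the summand is $y_j^{(n)}(\beta-\alpha)\leq C(\beta-\alpha)\rho^j$, and continuity of the fractional part at the a.s.\ non-integer points $y_j\alpha,\ y_j\beta$ for the finitely many head terms. For the convergence in distribution, however, you take a genuinely different route. The paper keeps the original array $(\Phi_{n,j})$ and works with the conditional characteristic function of $Q_n$ given the $y$-array: conditional independence over $j$ gives an infinite product in terms of $\omega_{j,n}=\{y_j^{(n)}\beta\}-\{y_j^{(n)}\alpha\}$, and two applications of dominated convergence (after taking logarithms for large $j$) transfer the convergence to the unconditional characteristic function. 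You instead build a coupling $\Phi_{n,j}:=\Phi_j$, correctly note that this leaves the law of each $\widetilde{X}_n$ unchanged, and then prove almost sure convergence on the enlarged space, killing the tail indicators via the $n$-free domination $\mathds{1}_{\Phi_j\in(y_j^{(n)}\alpha,\,y_j^{(n)}\beta]}\leq \mathds{1}_{\Phi_j\leq C\rho^j\beta}$ and a conditional Borel--Cantelli argument (given $C$, the conditional probabilities $\min(C\rho^j\beta,1)$ are summable, so a.s.\ only finitely many such events occur, for the prelimit and the limit series alike). This is valid because convergence in distribution only involves the marginal laws, and it buys you a slightly stronger conclusion (a.s.\ convergence under an explicit coupling) while avoiding all characteristic-function computations; the paper's route avoids introducing any new coupling and stays with the $\Phi_{n,j}$ as originally given. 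The only cosmetic slips are that your displayed sum of conditional probabilities should be an inequality with the probabilities capped at $1$, and that the limiting $\Phi_j$ in the statement are described as independent of the $\Phi_{n,j}$, which your coupling deliberately violates --- harmless, since only the law of the limit matters.
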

\begin{remarque}
Note that this proposition is a reformulation of Proposition~\ref{prop:limitcounting}.
\end{remarque}
\begin{proof}
First, we know that a.s., for all $j$, $y_j>0$, hence $\mathds{1}_{y_j^{(n)}>0} \underset{n\to\infty}{\longrightarrow} 1$. \\
Let $x >0$. We are going to show that a.s., $\sum\limits_{j=1}^{+\infty} \lbrace y_j^{(n)} x \rbrace \underset{n\to\infty}{\longrightarrow} \sum\limits_{j=1}^{+\infty} \lbrace y_j x \rbrace$.
By Lemma~\ref{lem:expdecay}, almost surely there exists $\rho \in (0,1)$ and a random number $C>0$ such that for all $j$ and $n$, $y_j^{(n)}\leq C \rho^j$, then
\[\exists j_0 \in \N^*, \quad \forall j > j_0, \quad \forall n \geq 1, \quad y_j^{(n)} x \leq \frac{1}{2}.\]
Fix $j_0$. Letting $n$ tend to infinity, as $y_j^{(n)} \longrightarrow y_j$ a.s., we have for all $j> j_0$, $y_j x\leq \frac{1}{2}$ and then 
\[\vert  \lbrace y_j^{(n)} x \rbrace -  \lbrace y_j x \rbrace \vert = \vert x(y_j^{(n)} - y_j) \vert \underset{n\to\infty}{\longrightarrow} 0. \] 
Moreover, obviously for all $j$ and $n$, 
\[\lbrace y_j^{(n)} x \rbrace \leq y_j^{(n)} x \leq C x \rho^j \]
which is summable in $j$. Hence, by dominated convergence it follows 
\begin{equation}\label{eq:dominatedyj}
\sum\limits_{j=j_0+1}^{+\infty} \lbrace y_j^{(n)} x \rbrace \underset{n\to\infty}{\longrightarrow} \sum\limits_{j=j_0+1}^{+\infty} \lbrace y_j x \rbrace
\end{equation}
almost surely. \\
For $j\leq j_0$ the idea is to take $n$ large enough such that the only $y_j x$ that could pose a challenge are integers (discontinuities of the fractional part function). Let $\varepsilon >0$. There exists $N\in \N^*$ such that for all $n\geq N$, for all $j\leq j_0$, 
\[\vert  \lbrace y_j^{(n)} x \rbrace -  \lbrace y_j x \rbrace \vert \leq \left\{\begin{array}{ll}
	\frac{\varepsilon}{j_0} & \text{if } y_j x \not\in \N  \\
	1 & \text{if } y_j x \in \N
\end{array}\right. ,\]
and then
\[\sum_{j=1}^{j_0}  \vert \lbrace y_j^{(n)} x \rbrace -  \lbrace y_j x \rbrace \vert \leq \varepsilon + \sum_{j=1}^{j_0} \mathds{1}_{y_j x \in \N}.\]
In addition $\sum_{j\leq j_0} \mathds{1}_{y_j x \in \N}=0$ a.s. since it is a finite sum of indicators of negligible events. From \eqref{eq:dominatedyj} we deduce that a.s., 
\begin{equation*}
\sum\limits_{j=1}^{+\infty} \lbrace y_j^{(n)} x \rbrace \underset{n\to\infty}{\longrightarrow} \sum\limits_{j=1}^{+\infty} \lbrace y_j x \rbrace. 
\end{equation*}

It just remains to prove the convergence in distribution of $Q_n:=\sum\limits_{j=1}^{+\infty} \left(\mathds{1}_{\Phi_{n,j} \leq \lbrace y_j^{(n)} \beta \rbrace }  - \mathds{1}_{\Phi_{n,j} \leq \lbrace y_j^{(n)} \alpha \rbrace } \right)$ to $Q:=\sum\limits_{j=1}^{+\infty} \left(\mathds{1}_{\Phi_{j} \leq \lbrace y_j \beta \rbrace } - \mathds{1}_{\Phi_{j} \leq \lbrace y_j \alpha \rbrace } \right)$. \\
Let $t \in \R$. Denoting $\omega_{j,n}:= \lbrace y_j^{(n)} \beta \rbrace - \lbrace y_j^{(n)} \alpha \rbrace$, we have:
\begin{align*}
\E \left[ \mathrm{e}^{i t Q_n } \vert (y_j^{(m)})_{j,m\geq 1}    \right] 
	&= \prod_{j=1}^{+\infty}\left( \mathrm{e}^{it} \omega_{j,n} \mathds{1}_{\omega_{j,n} >0} + \mathrm{e}^{-it} (-\omega_{j,n}) \mathds{1}_{\omega_{j,n} <0} + 1 \times (1- \vert \omega_{j,n} \vert ) \right) \\
	&= \prod_{j=1}^{+\infty}\left( 1 + (\mathrm{e}^{it} -1)\omega_{j,n} \mathds{1}_{\omega_{j,n} >0} - (\mathrm{e}^{-it} -1)  \omega_{j,n} \mathds{1}_{\omega_{j,n} <0}  \right) .
\end{align*}
Taking the logarithm for $j$ large enough, and noting that a.s. there is no $j$ such that $y_j \alpha$ or $y_j \beta$ is integer ($\alpha , \beta >0 $), the dominated convergence theorem ensures that
\[\E \left[ \mathrm{e}^{i t Q_n } \vert (y_j^{(m)})_{j,m\geq 1}  \right]  \underset{n\to\infty}{\longrightarrow} \E \left[ \mathrm{e}^{i t Q} \vert (y_j^{(m)})_{j,m\geq 1}  \right] \]
for almost every realization of $(y_j)_{j\geq 1}$.
Applying once again the dominated convergence theorem, we get
\begin{align*}
\lim_{n\to\infty} \E \left[ \E \left[ \mathrm{e}^{i t Q_n } \vert (y_j^{(m)})_{j,m\geq 1}  \right] \right] &= \E \left[ \lim_{n\to\infty} \E \left[ \mathrm{e}^{i t Q_n } \vert (y_j^{(m)})_{j,m\geq 1}  \right]  \right] = \E \left[ \mathrm{e}^{i t Q} \right] .
\end{align*}
\end{proof}

\section{Limiting point process related to permutation matrices with modification. Proof of Theorem~\ref{thm:asympttautilde}}
\label{sec:limitMPM}

For $A>0$, define
\[\widetilde{X} (A) = A + \sum_{j=1}^{+\infty} \left( \mathds{1}_{\Phi_j \leq \lbrace A y_j \rbrace } - \lbrace A y_j \rbrace \right) .\]
According to the previous section, this random variable counts the number of points in $[0,A]$ of the limiting point process of normalized eigenangles of $\widetilde{M}_n$ when $n$ goes to infinity, i.e we have $\widetilde{X} (A) = \widetilde{\tau}_\infty ([0,A])$. Then, proving Theorem~\ref{thm:asympttautilde} amounts to show 
\begin{equation}
\frac{\widetilde{X} (A) - A}{\sqrt{\frac{\theta}{6} \log (A)}}\underset{A \to \infty}{\overset{d}{\longrightarrow}} \mathcal{N}(0,1).
\end{equation}

Let $A>1$. We first notice that we can write
\[\widetilde{X} (A) - A = \sum_{j=1}^{+\infty} B(p_j)\]
where the $B(p_j)$ are centred Bernoulli random variables of random parameters $p_j:=\{ Ay_j \}$, which are independent conditionally on the $y_j$. \\
Let $\lambda_0 \in \R$ and denote $\lambda := \frac{\lambda_0}{\sqrt{\frac{\theta}{6} \log (A)}}$.
\begin{align*}
\E \left[ \mathrm{e}^{i \lambda \sum\limits_{j=1}^{+\infty} B(p_j)}\  \vert \ (y_m)_{m\geq 1} \right] &= \prod_{j=1}^{+\infty} \E \left[ \mathrm{e}^{i \lambda B(p_j)}\  \vert \ (y_m)_{m\geq 1} \right] \\
	&=\prod_{j=1}^{+\infty} \left( 1 + p_j \left(\mathrm{e}^{i\lambda (1-p_j)}-1\right) + (1-p_j)\left(\mathrm{e}^{-i\lambda p_j}-1\right) \right) \\
	&\underset{A\to\infty}{=}\prod_{j=1}^{+\infty} \left( 1- \frac{\lambda^2}{2} p_j (1-p_j) (1+ \mathcal{O}(\lambda ))  \right).
\end{align*}
Moreover, since the sequence $(p_j(1-p_j))_{j \geq 1}$ is bounded (uniformly in $A$) and using the fact that for all complex numbers $z$ sufficiently close to zero we have $1+z = \exp (z + \mathcal{O} (z^2) )$, it follows that for all $A$ large enough,
\begin{align*}
\E \left[ \mathrm{e}^{i \lambda \sum\limits_{j=1}^{+\infty} B(p_j)} \right] &= \E \left( \exp\left( - \frac{\lambda^2}{2}(1+ \mathcal{O}(\lambda )) \sum_{j=1}^{+\infty}  p_j (1-p_j) \right)  \right). 
\end{align*}
Thus we want to show that
\[\E \left( \exp\left( - \frac{\lambda^2}{2} (1+ \mathcal{O}(\lambda )) \sum\limits_{j=1}^{+\infty}  p_j (1-p_j) \right) \right) \underset{A\to\infty}{\longrightarrow} \mathrm{e}^{-\frac{\lambda_0^2}{2}}.\]
For this purpose, it suffices to show that the random variable $Z_A:= \frac{1}{\log A} \sum\limits_{j=1}^{+\infty} p_j (1-p_j)$ converges in probability to $\frac{\theta}{6}$ when $A$ goes to $+\infty$. Indeed, if we show this, then $Z_A(1+\mathcal{O}(\lambda))$ will clearly converge in probability to $\frac{\theta}{6}$ and it will just remain to apply the definition of the convergence in distribution of $Z_A(1+\mathcal{O}(\lambda))$ (which is positive for all $A$ large enough) to the bounded continuous function $f: x \mapsto \exp \left( -\frac{3 \lambda_0^2}{\theta} x \right)$ on $[0,+\infty )$. \\
Let $\varepsilon >0$. We cut the sum in $Z_A$ into three parts: $j > (1+\varepsilon)\theta \log A$, $(1-\varepsilon)\theta \log A < j \leq (1+\varepsilon)\theta \log A$ and $j\leq (1-\varepsilon)\theta \log A$. \\
In the first regime, we have, noticing that for all integers $k\geq 1$, $\sum\limits_{j=k+1}^{+\infty} y_j \overset{\text{d}}{=} \prod\limits_{j=1}^k U_j$ where the random variables $U_j$ are independent and follow Beta distribution of parameters $\theta$ and $1$,
\begin{align*}
\PP \left( \sum_{j> (1+\varepsilon)\theta \log A} p_j (1-p_j) \geq 1 \right) &\leq \PP \left( \sum_{j> (1+\varepsilon)\theta \log A} Ay_j \geq 1 \right) \\
	&= \PP \left( \prod_{j \leq (1+\varepsilon)\theta \log A} U_j \geq  \frac{1}{A} \right) \\
	&= \PP \left( \frac{1}{(1+\varepsilon)\theta \log A}\sum_{j \leq (1+\varepsilon)\theta \log A} \log U_j \geq  - \frac{1}{(1+\varepsilon)\theta} \right).
\end{align*}
As $\E (\log U_1) = \int_0^1 \log (x) \theta x^{\theta - 1} \mathrm{d}x = \frac{-1}{\theta}$ and  $\frac{-1}{(1+\varepsilon)\theta} > \frac{-1}{\theta}$, then the weak law of large numbers yields  
\[\PP \left( \sum_{j> (1+\varepsilon)\theta \log A} p_j (1-p_j) \geq 1 \right) \underset{A\to\infty}{\longrightarrow} 0,\]
and then
\begin{equation}\label{eq:tautilde1}
\frac{1}{\log A} \sum_{j>(1+\varepsilon)\theta \log A} p_j (1-p_j) \overset{\PP}{\longrightarrow} 0.
\end{equation}
For the $j$ satisfying $(1-\varepsilon)\theta \log A < j \leq (1+\varepsilon)\theta \log A$, 
\begin{equation}\label{eq:tautilde2}
\frac{1}{\log A}  \sum_{(1-\varepsilon)\theta \log A < j \leq (1+\varepsilon)\theta \log A} p_j (1-p_j) \leq \frac{1}{\log A}  \sum_{(1-\varepsilon)\theta \log A < j \leq (1+\varepsilon)\theta \log A} 1 < 2\theta \varepsilon +\frac{1}{\log A}. 
\end{equation}
Finally, for $j\leq (1-\varepsilon)\theta \log A$, let us show that the sum converges in probability to $\frac{\theta}{6} (1-\varepsilon)$. To this end, it is enough to show that its two first moments respectively converge to $\frac{\theta}{6} (1-\varepsilon)$ and $\left(\frac{\theta}{6} (1-\varepsilon)\right)^2$. \\
Recall that for all $j$, $p_j=\{ Ay_j\}$, so the computation of the moments is not obvious. Note that $p_j (1-p_j) = \frac{1}{6} - B_2 (p_j)$, where $B_2$ is the second Bernoulli polynomial ($B_2 (x)=x^2-x+\frac{1}{6}$), which gives a simple expression of its Fourier series. More precisely, for all $x \in \R$ we have the following expansion in Fourier series: 
\[\{x\}(1-\{x\}) = \frac{1}{6} - \frac{1}{2\pi^2}\sum_{k\neq 0} \frac{\mathrm{e}^{2i\pi k x}}{k^2}.\]
Hence,
\[\frac{1}{\log A} \sum_{j\leq (1-\varepsilon)\theta \log A} p_j (1-p_j) = \frac{\lfloor (1-\varepsilon)\theta \log A \rfloor}{6 \log A} - \frac{1}{2\pi^2 \log A}  \sum_{j\leq (1-\varepsilon)\theta \log A} \sum_{k\neq 0} \frac{\mathrm{e}^{2i\pi k A y_j}}{k^2}.\]
For $k\neq 0$, 
\begin{align*}
\E \left( \mathrm{e}^{2i\pi k A y_j} \right) &= \E \left[ \E \left[ \mathrm{e}^{2i\pi k A U_1 \dots U_{j-1} (1-U_j)} \ \vert \ (U_m)_{m\leq j-1}  \right] \right] \\
	&= \E \left[ \int_0^1   \mathrm{e}^{2i\pi k A U_1 \dots U_{j-1} (1-x)} \theta x^{\theta -1} \mathrm{d}x  \right]
\end{align*}
Let $\alpha \in (1-\varepsilon , 1)$ and $\eta \in (0, 1)$ that we will precise at the end of the proof. \\
We write 
\begin{align*}
\int_0^1   \mathrm{e}^{2i\pi k A U_1 \dots U_{j-1} (1-x)} \theta x^{\theta -1} \mathrm{d}x  &= \int_0^1   \mathrm{e}^{2i\pi k A U_1 \dots U_{j-1} (1-x)} \theta x^{\theta -1} \mathrm{d}x \mathds{1}_{U_1 \dots U_{j-1} \leq A^{-\alpha}} \\
	&\quad + \int_0^\eta  \mathrm{e}^{2i\pi k A U_1 \dots U_{j-1} (1-x)} \theta x^{\theta -1} \mathrm{d}x \mathds{1}_{U_1 \dots U_{j-1} > A^{-\alpha}} \\
	&\quad + \int_\eta^1   \mathrm{e}^{2i\pi k A U_1 \dots U_{j-1} (1-x)} \theta x^{\theta -1} \mathrm{d}x \mathds{1}_{U_1 \dots U_{j-1} > A^{-\alpha}} 
\end{align*} 
For the first term on the right-hand side of the equality, 
\begin{align*}
\E \left[ \left\vert \int_0^1   \mathrm{e}^{2i\pi k A U_1 \dots U_{j-1} (1-x)} \theta x^{\theta -1} \mathrm{d}x \mathds{1}_{U_1 \dots U_{j-1} \leq A^{-\alpha}} \right\vert \right] &\leq \int_0^1 \theta x^{\theta -1} \mathrm{d}x \PP ( U_1 \dots U_{j-1} \leq A^{-\alpha} ) \\
	&\leq \PP \left( \prod_{m\leq (1-\varepsilon) \theta \log A} U_m \leq A^{-\alpha}  \right) \\
	&=\PP \left( \frac{1}{(1-\varepsilon) \theta \log A} \sum_{m\leq (1-\varepsilon) \theta \log A}\log U_m \leq -\frac{\alpha}{(1-\varepsilon)\theta} \right) \\
	&\underset{A\to +\infty}{\longrightarrow} 0
\end{align*}
by the weak law of large numbers, since $\frac{-\alpha}{(1-\varepsilon)\theta} < \E (\log U_1) = \frac{-1}{\theta}$. Note that the convergence is uniform in $j$ and $k$. \\
For the second term,  
\[\left\vert \int_0^\eta  \mathrm{e}^{2i\pi k A U_1 \dots U_{j-1} (1-x)} \theta x^{\theta -1} \mathrm{d}x \mathds{1}_{U_1 \dots U_{j-1} > A^{-\alpha}} \right\vert \leq \int_0^\eta \theta x^{\theta - 1} \mathrm{d}x = \eta^\theta .\]
For the third term, an integration by parts gives
\[\int_\eta^1   \mathrm{e}^{2i\pi k A U_1 \dots U_{j-1} (1-x)} \theta x^{\theta -1} \mathrm{d}x = \left[ -\frac{\mathrm{e}^{2i\pi kAU_1 \dots U_{j-1} (1-x)}}{2i\pi kAU_1 \dots U_{j-1} } \theta x^{\theta - 1} \right]_\eta^1 + \int_\eta^1 \frac{\mathrm{e}^{2i\pi kAU_1 \dots U_{j-1} (1-x)}}{2i\pi kAU_1 \dots U_{j-1} } \theta (\theta -1) x^{\theta -2} \mathrm{d}x, \]
so 
\begin{align*}
\left\vert \int_\eta^1   \mathrm{e}^{2i\pi k A U_1 \dots U_{j-1} (1-x)} \theta x^{\theta -1} \mathrm{d}x \mathds{1}_{U_1 \dots U_{j-1} > A^{-\alpha}} \right\vert &\leq \frac{2\theta}{2\pi \vert k \vert AU_1 \dots U_{j-1}}(1+\eta^{\theta -1}) \mathds{1}_{U_1 \dots U_{j-1} > A^{-\alpha}}  \\
	&\leq \frac{\theta}{\pi} (1+\eta^{\theta -1}) A^{\alpha -1 }. 
\end{align*}
It remains to show that we can chose $\eta$ (depending on $A$) such that $\max (\eta^\theta , \eta^{\theta -1} A^{\alpha -1 })$ converges to $0$ when $A$ goes to infinity. If $\theta \geq 1$ it is clear, for instance we can take $\eta = A^{-1}$. If $\theta<1$, $\eta = A^{\frac{1-\alpha}{2(\theta -1)}}$ works. \\
We deduce 
\[\E \left( \mathrm{e}^{2i\pi k A y_j} \right) \underset{A\to +\infty}{=} o (1)\]
where the $o(1)$ is independent of $k$ and $j$. Consequently, 
\begin{align*}
\E \left[ \frac{1}{\log A} \sum_{j\leq (1-\varepsilon)\theta \log A} p_j (1-p_j) \right] &\underset{A\to +\infty}{=} \left( \frac{\theta}{6} (1-\varepsilon) + o(1) \right) - \left(\frac{1}{2\pi^2 \log A}  \sum_{j\leq (1-\varepsilon)\theta \log A} \sum_{k\neq 0} \frac{1}{k^2} \right) o(1) \\ 
	&= \frac{\theta}{6} (1-\varepsilon) + o(1).
\end{align*}
Now, let us show that the second moment converges to $\left(\frac{\theta}{6} (1-\varepsilon)\right)^2$. We have 
\begin{align*}
\left( \sum_{j\leq (1-\varepsilon)\theta \log A} p_j (1-p_j)  \right)^2 &= \frac{\lfloor (1-\varepsilon)\theta \log A \rfloor^2}{36} - \frac{\lfloor (1-\varepsilon)\theta \log A \rfloor}{6\pi^2}  \sum_{j\leq (1-\varepsilon)\theta \log A} \sum_{k\neq 0} \frac{ \mathrm{e}^{2i\pi k A y_j}}{k^2} \\
	&\quad + \frac{1}{4\pi^4} \sum_{j_1,j_2 \leq (1-\varepsilon)\theta \log A } \sum_{k,l\neq 0} \frac{ \mathrm{e}^{2i\pi A (k y_{j_1} + l y_{j_2})}}{k^2 l^2}.
\end{align*}
Let $j_1,j_2 \geq 1$ and $k,l \neq 0$.
\begin{itemize}
	\item If $j_2 > j_1$, then
	\begin{align*}
	\E \left( \mathrm{e}^{2i\pi A (k y_{j_1} + l y_{j_2})} \right) &= \E \left[ \E \left[ \mathrm{e}^{2i\pi A k U_1 \dots U_{j_1-1} (1-U_{j_1})} \mathrm{e}^{2i\pi A l U_1 \dots U_{j_2-1} (1-U_{j_2})}  \ \vert \ (U_m)_{m\leq j_2 -1}   \right]  \right] \\
	&= \E \left[ \mathrm{e}^{2i\pi A k U_1 \dots U_{j_1-1} (1-U_{j_1})} \int_0^1 \mathrm{e}^{2i\pi A l U_1 \dots U_{j_2-1} (1-x)} \theta x^{\theta -1} \mathrm{d}x  \right]
	\end{align*}
	and $\left \vert \mathrm{e}^{2i\pi A k U_1 \dots U_{j_1-1} (1-U_{j_1})} \int_0^1 \mathrm{e}^{2i\pi A l U_1 \dots U_{j_2-1} (1-x)} \theta x^{\theta -1} \mathrm{d}x  \right\vert = \left \vert \int_0^1 \mathrm{e}^{2i\pi A l U_1 \dots U_{j_2-1} (1-x)} \theta x^{\theta -1} \mathrm{d}x  \right\vert$ so, dividing into three pieces as previously we get $\E \left( \mathrm{e}^{2i\pi A (k y_{j_1} + l y_{j_2})} \right) = o (1)$ where the $o(1)$ is independent of $k,l,j_1$ and $j_2$.
	\item If $j_1=j_2$ and $k+l\neq 0$, then
	\[\E \left( \mathrm{e}^{2i\pi A (k y_{j_1} + l y_{j_2})} \right) = \E \left( \mathrm{e}^{2i\pi A (k + l) y_{j_1}} \right) = o(1)\]
	as above.
	\item If $j_1=j_2$ and $k+l =0$, then 
	\[\E \left( \mathrm{e}^{2i\pi A (k y_{j_1} + l y_{j_2})} \right) = 1.\]
\end{itemize}
Thus, 
\begin{align*}
\E \left( \sum_{j_1,j_2 \leq (1-\varepsilon)\theta \log A } \sum_{k,l\neq 0} \frac{ \mathrm{e}^{2i\pi A (k y_{j_1} + l y_{j_2})}}{k^2 l^2} \right) &= o ((\log A)^2) + \sum_{j_1 \leq (1-\varepsilon)\theta \log A } \sum_{k\neq 0} \frac{1}{k^2 (-k)^2} \\
	&= o ((\log A)^2),
\end{align*}
and it follows 
\[\E \left[ \left( \frac{1}{\log A} \sum_{j\leq (1-\varepsilon)\theta \log A} p_j (1-p_j) \right)^2 \right] \underset{A\to +\infty}{=} \left( \frac{\theta}{6} (1-\varepsilon) \right)^2 + o(1). \]
Consequently, 
\begin{equation}\label{eq:tautilde3}
\frac{1}{\log A} \sum_{j\leq (1-\varepsilon)\theta \log A} p_j (1-p_j) \overset{\PP}{\longrightarrow} \frac{\theta}{6}(1-\varepsilon ). 
\end{equation}
Let us now finish to prove the convergence in probability of $Z_A$ to $\frac{\theta}{6}$. For the sake of simplicity, denote 
\[\left\{\begin{array}{l}
Z_{A,>} := \frac{1}{\log A} \sum\limits_{j> (1+\varepsilon)\theta \log A} p_j (1-p_j) \\ 
Z_{A,\star} := \frac{1}{\log A} \sum\limits_{(1-\varepsilon)\theta \log A < j \leq (1+\varepsilon)\theta \log A} p_j (1-p_j) \\ 
Z_{A,\leq} := \frac{1}{\log A} \sum\limits_{j\leq (1-\varepsilon)\theta \log A} p_j (1-p_j). 
\end{array}\right. \]
Combining \eqref{eq:tautilde1}, \eqref{eq:tautilde2} and \eqref{eq:tautilde3}, we have shown: 
\[\left\{\begin{array}{l}
Z_{A,>}  \overset{\PP}{\longrightarrow} 0 \\ 
Z_{A,\star} \leq 2\varepsilon \theta + \frac{1}{\log A} \\ 
Z_{A,\leq} \overset{\PP}{\longrightarrow} \frac{\theta}{6}(1-\varepsilon ).
\end{array}\right. \]
Let $\eta >0$. We have \\
\[ \PP \left( \left\vert Z_A - \frac{\theta}{6} \right\vert > \eta \right) \leq \PP \left( Z_{A,>} > \frac{\eta}{4} \right) + \PP \left( Z_{A,\star} >  \frac{\eta}{4} \right) + \PP \left( \left\vert Z_{A,\leq} - \frac{\theta}{6} (1-\varepsilon ) \right\vert > \frac{\eta}{4} \right) + \PP \left( \frac{\theta}{6} \varepsilon  > \frac{\eta}{4} \right) \]
with 
\[\PP \left( Z_{A,>} > \frac{\eta}{4} \right) \underset{A \to +\infty}{\longrightarrow} 0,\]
\[\PP \left( Z_{A,\star} >  \frac{\eta}{4} \right) \leq \mathds{1}_{2\varepsilon \theta + \frac{1}{\log A} > \frac{\eta}{4}},\]
and 
\[ \PP \left( \left\vert Z_{A,\leq} - \frac{\theta}{6} (1-\varepsilon ) \right\vert > \frac{\eta}{4} \right) \underset{A \to +\infty}{\longrightarrow} 0 \]
whence taking $\varepsilon$ sufficiently close to $0$ (only depending on $\eta$ and $\theta$, for example $\varepsilon = \frac{\eta}{12 \theta}$ fits well), we get
\[\PP \left( \left\vert Z_A - \frac{\theta}{6} \right\vert > \eta \right)  \underset{A \to +\infty}{\longrightarrow} 0,  \]
and the proof is complete.


\section{Continuous analog of the Feller coupling}
\label{sec:analogFeller}


Let $\mathcal{X}$ be a Poisson process with intensity $\frac{\theta}{x} \mathrm{d}x$ on $(0,\infty )$. \\
In this section we are going to show that one can couple the set of random variables $\{y_k,\ k \geq 1\}$ with a set of independent random variables which has the same distribution as $\mathcal{X}\cap (0,1)$, in such a way that these sets are close to each other in $L^2$, in a sense which is made precise below. 

We choose to label the points of $\mathcal{X}$ in the following way:
\begin{equation}
0< \dots < X_3 < X_2 < X_1 <1 \leq X_0 < X_{-1} < X_{-2} < \dots < \infty.
\end{equation}
For all $k \in \Z$, set $Y_k:=X_{k-1}-X_k$. \\
Denote $\mathcal{V}:= \{ 1-X_1, X_1-X_2 , X_2-X_3, \dots  \}$ and $\mathcal{W}:= \{ Y_k : \ k\in \Z , \ Y_k <1 \}$. 

To begin with, note that we have the equalities in law $\{ y_k, \ k\geq 1 \}  \overset{d}{=} \mathcal{V}$ and $\mathcal{W} \overset{d}{=} \mathcal{X}\cap (0,1)$. Indeed, this is a direct consequence of the two following lemmas:
\begin{lem}\label{lem:equalGEM}
\[(y_1 , y_2 , y_3 , \dots ) \overset{d}{=} (1-X_1, X_1-X_2 , X_2-X_3 , \dots  ). \]
\end{lem}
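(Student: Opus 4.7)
The plan is to reduce the equality in law to the statement that partial products of i.i.d.\ Beta$(\theta,1)$ variables and the ordered points of $\mathcal{X}\cap(0,1)$ coincide in distribution. The bridge between the two sides is the Poisson mapping theorem applied to $x\mapsto-\log x$.

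First I would unfold the stick-breaking construction of GEM$(\theta)$: writing $y_j=U_j\prod_{i<j}(1-U_i)$ with $U_i$ i.i.d.\ Beta$(1,\theta)$, and setting $V_i:=1-U_i\sim$ Beta$(\theta,1)$ (density $\theta v^{\theta-1}\mathbf{1}_{(0,1)}(v)$) and $Z_j:=\prod_{i=1}^{j}V_i$ with $Z_0=1$, one gets telescopically $y_j=Z_{j-1}-Z_j$ for every $j\geq 1$. Hence the lemma is equivalent to
\[(Z_1,Z_2,Z_3,\dots)\overset{d}{=}(X_1,X_2,X_3,\dots).\]

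To prove this equality, I would pass to logarithms. A short change of variables shows that if $V\sim$ Beta$(\theta,1)$ then $-\log V\sim$ Exp$(\theta)$: indeed $\mathbb{P}(-\log V>t)=\mathbb{P}(V<e^{-t})=e^{-\theta t}$ for $t\geq 0$. Consequently $(-\log Z_j)_{j\geq 1}=\bigl(\sum_{i=1}^{j}E_i\bigr)_{j\geq 1}$ with $E_i$ i.i.d.\ Exp$(\theta)$, i.e.\ the arrival times of a homogeneous Poisson process of rate $\theta$ on $(0,\infty)$. On the other hand, the Poisson mapping theorem applied to $\varphi:x\mapsto-\log x$ transports $\mathcal{X}$ (intensity $\frac{\theta}{x}\,dx$ on $(0,\infty)$) into a Poisson process with intensity $\theta\,du$ on $\mathbb{R}$, because $\varphi_*\!\left(\frac{\theta}{x}\,dx\right)=\theta\,du$. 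Under $\varphi$, the set $\mathcal{X}\cap(0,1)$ becomes the points in $(0,\infty)$, and the ordering $X_1>X_2>\cdots$ becomes $-\log X_1<-\log X_2<\cdots$, which are precisely the arrival times (in increasing order) of this rate-$\theta$ Poisson process on $(0,\infty)$.

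Both $(-\log Z_j)_{j\geq 1}$ and $(-\log X_j)_{j\geq 1}$ are therefore distributed as the ordered arrival times of a Poisson process of rate $\theta$ on $(0,\infty)$, so they agree in law. Applying the continuous bijection $u\mapsto e^{-u}$ coordinatewise gives $(Z_j)_{j\geq 1}\overset{d}{=}(X_j)_{j\geq 1}$, and taking successive differences (with the convention $Z_0=X_0^\ast:=1$, though here we only need the initial term $1-X_1$) yields the claim. There is no serious obstacle beyond verifying the density of $-\log V_i$ and invoking the mapping theorem for Poisson processes; the calculation is essentially routine once the stick-breaking variables are repackaged as partial products.
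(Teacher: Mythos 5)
Your proof is correct: the telescoping identity $y_j=Z_{j-1}-Z_j$ for the stick-breaking partial products, the computation $-\log V\sim\mathrm{Exp}(\theta)$ for $V\sim\mathrm{Beta}(\theta,1)$, and the mapping theorem sending the intensity $\frac{\theta}{x}\,\mathrm{d}x$ to $\theta\,\mathrm{d}u$ under $x\mapsto -\log x$ together identify both $(-\log Z_j)_{j\geq 1}$ and $(-\log X_j)_{j\geq 1}$ with the arrival times of a rate-$\theta$ homogeneous Poisson process on $(0,\infty)$, which gives $(Z_j)_{j\geq 1}\overset{d}{=}(X_j)_{j\geq 1}$ and hence the lemma. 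The paper does not prove this lemma itself but defers to the cited reference of Arratia, and your argument is essentially the standard one found there (logarithmic change of variables turning scale invariance into translation invariance, i.i.d.\ exponential spacings corresponding to i.i.d.\ Beta$(\theta,1)$ ratios), so nothing is missing.
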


\begin{lem}[Scale invariant spacing lemma]\label{lem:spacing}
\[\{Y_k ,\ k\in \Z \} \overset{d}{=} \{X_k ,\ k\in \Z \}.\]
\end{lem}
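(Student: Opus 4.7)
My strategy is to verify that the random measure $\sum_{k \in \Z}\delta_{Y_k}$ has the same Laplace functional as the Poisson process $\mathcal{X}$ itself, namely
\[\E\exp\Bigl(-\sum_{k \in \Z} g(Y_k)\Bigr) = \exp\Bigl(-\int_0^\infty(1-e^{-g(y)})\tfrac{\theta}{y}\,dy\Bigr)\]
for every non-negative continuous $g$ compactly supported in $(0,\infty)$. A useful preliminary observation is that both sides of this desired identity are scale invariant: the intensity $\theta/x\,dx$ is invariant under $x \mapsto cx$, so $c\mathcal{X} \overset{d}{=} \mathcal{X}$ for each $c > 0$, and, since the spacing map is scale-equivariant, the distribution of $\{Y_k\}$ is also scale invariant. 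This will be crucial in the integrations below.

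I would first treat the first moment. By the Slivnyak--Mecke formula, conditioning on a Palm point at $x \in \mathcal{X}$, the previous point $x^-$ is the top point of an independent PPP with intensity $\theta/z\,dz$ on $(0,x)$, which by scale invariance has the law of $xU$ with $U \sim \operatorname{Beta}(\theta,1)$. A change of variables $y = x(1-u)$ then yields
\[\E\Bigl[\sum_k g(Y_k)\Bigr] = \int_0^\infty\!\int_0^1 g(x(1-u))\,\theta u^{\theta-1}\,du\cdot\tfrac{\theta}{x}\,dx = \int_0^\infty g(y)\tfrac{\theta}{y}\,dy,\]
so the mean measure of $\{Y_k\}$ agrees with the intensity of $\mathcal{X}$.

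To promote this first-moment match to equality of Laplace functionals, I would compute the full $n$-th factorial moment of $\sum_k g(Y_k)$ by an iterated Slivnyak--Mecke expansion. For $n$ distinct Palm points $x_1 < \cdots < x_n$ of $\mathcal{X}$, the spacings $x_i - x_i^-$ are correlated because $x_i^-$ may coincide with $x_{i-1}$ when no background point of $\mathcal{X}$ sits between $x_{i-1}$ and $x_i$. I would decompose the factorial moment according to the partition of $\{1,\ldots,n\}$ into maximal ``adjacency blocks'' (consecutive Palm points with no intervening background point), and integrate out the background points of each block using the explicit void probability $\exp(-\int_a^b \theta/z\,dz) = (a/b)^\theta$. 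Scale invariance should make each block's nested integral collapse (via iterated changes of variable of the same type as in the first-moment computation) to a single copy of $\int g(y)\theta/y\,dy$; summing over all partitions then reconstitutes the $n$-th factorial moment $\bigl(\int g(y)\theta/y\,dy\bigr)^n$ of a PPP with intensity $\theta/y\,dy$.

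The main obstacle is precisely this combinatorial summation. Each partition contributes a nested integral whose collapse depends critically on the scale invariance of $\theta/y\,dy$; the analogous argument would fail for a generic intensity, and tracking the precise cancellations requires care. An alternative fallback I would consider is a direct computation using the explicit stick-breaking representation $X_k = U_1 \cdots U_k$ for $k \geq 1$ and $X_{-k} = 1/(V_0 \cdots V_k)$ for $k \geq 0$, with independent $U_i, V_j \sim \operatorname{Beta}(\theta,1)$ (coming from the independence of $\mathcal{X} \cap (0,1)$ and $\mathcal{X} \cap [1,\infty)$ combined with scale invariance). One can then parametrize all spacings explicitly in terms of the $U_i, V_j$'s and match densities of finite-dimensional marginals by a triangular change of variables, the crucial delicate point being the ``boundary'' spacing $Y_1 = 1/V_0 - U_1$ that glues the two sides together across $1$.
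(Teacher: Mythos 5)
Your first-moment computation is correct (the Mecke/Slivnyak step, the Beta$(\theta,1)$ law of the predecessor, and the change of variables $y=x(1-u)$ all check out), but it only identifies the \emph{mean measure} of the spacing process, which is very far from the lemma: the content of the scale-invariant spacing lemma is the full Poisson structure (equivalently, complete independence) of $\{Y_k\}$, and everything beyond the mean is exactly what you leave unproven. The passage you describe for the $n$-th factorial moment --- decompose the Palm points into adjacency blocks, integrate out the background points with the void probability $(a/b)^{\theta}$, and claim that ``scale invariance should make each block's nested integral collapse'' to a factor of $\int g\,\theta\,dy/y$ --- is precisely the heart of the matter, and you explicitly flag it as an obstacle rather than carry it out (already at $n=2$ the split into the void/non-void cases produces a nontrivial identity that has to be verified, not just a formal repetition of the $n=1$ computation). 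In addition, even if all factorial moment measures were shown to be products of $\theta\,dy/y$, you would still need a determinacy or convergence argument to pass from moments to the Laplace functional you announce as your goal (standard here because the relevant counts are Poisson-bounded on compact subsets of $(0,\infty)$, but it must be said). Your fallback via the stick-breaking representation $X_k=U_1\cdots U_k$, $X_{-k}=1/(V_0\cdots V_k)$ runs into the problem you yourself name: the lemma is an equality of unlabeled point processes, the natural labelings of $\{Y_k\}$ and $\{X_k\}$ do not correspond (the gap $Y_1=1/V_0-U_1$ straddling $1$ is the visible symptom), and the ``triangular change of variables'' matching finite-dimensional densities is not exhibited. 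So as it stands the proposal is a plausible plan with its decisive steps missing, not a proof.

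For comparison: the paper does not prove this lemma at all --- it invokes it with a citation to Arratia's work (the reference \cite{arratia2006tale}), where the result is established; the fact that the published proofs there are not short calculations is consistent with the assessment that the step you defer is genuinely substantive. If you want to complete your route, the cleanest fix is to prove, by the multivariate Mecke formula and the independence of the restrictions of $\mathcal{X}$ to disjoint intervals, that the avoidance/Laplace functional of the spacing process factorizes over disjoint test sets and then identify the one-dimensional marginals (Poisson counts), rather than attempting the all-orders factorial-moment combinatorics directly.
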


We refer to \cite{arratia1998central} for a proof of Lemma~\ref{lem:equalGEM}, and \cite{arratia2006tale} for a proof of Lemma~\ref{lem:spacing}. As mentioned by Arratia in \cite{arratia1998central}, the scale-invariant Poisson process $\mathcal{X}$ is a continuum analog of the sequence $(\xi_j)_{j\geq 1}$ of independent Bernoulli variables involved in the Feller coupling for generating permutations (see \emph{e.g.} \cite{arratia2003logarithmic} for a description of the Feller coupling and related results). Indeed, for $j\geq 1$, the numbers of $j$-spacings between two consecutive ones in the infinite word $\xi_1 \ \xi_2  \dots $ are independent, and similarly by Lemma~\ref{lem:spacing} the spacings obtained from the process $\mathcal{X}$ also form an independent process (in the sense that the numbers of points on disjoint intervals are independent).

Now, we show that the sets $\mathcal{V}$ and $\mathcal{W}$ are close from each other in the following sense: 
\begin{lem}\label{lem:Feller}
There exists a constant number $C(\theta)$ such that
\[\E ((\# \mathcal{V} \Delta \mathcal{W})^2 ) \leq C (\theta).\]
In particular, for all measurable functions $f:\R \to \R$, 
\[\E \left( \left(\sum_{x\in \mathcal{V}} f(x) - \sum_{x\in \mathcal{W}} f(x)\right)^2 \right) \leq C(\theta ) \Vert f \Vert_\infty^2 .  \]
\end{lem}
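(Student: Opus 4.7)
My plan is to identify $\mathcal{V} \Delta \mathcal{W}$ almost surely as a small explicit collection of spacings, then bound the second moment of its cardinality using the Mecke--Campbell formulas for the Poisson process $\mathcal{X}$.

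First, write $\mathcal{V} = \{v_0, v_1, v_2, \ldots\}$ with $v_0 := 1 - X_1$ and $v_k := X_k - X_{k+1}$ for $k \geq 1$. Since $v_k = Y_{k+1} < X_k < 1$, one has $\{v_k : k \geq 1\} \subseteq \mathcal{W}$; and because $\mathcal{X}$ has a continuous distribution, almost surely $v_0$ does not coincide with any $Y_k$, so $\mathcal{V} \setminus \mathcal{W} = \{v_0\}$ a.s. Conversely, the elements of $\mathcal{W} \setminus \mathcal{V}$ are the spacings $Y_k < 1$ with $k \leq 1$, namely the straddling $Y_1 = X_0 - X_1$ (when $< 1$) together with the spacings $Y_{-j} = X_{-j-1} - X_{-j}$ for $j \geq 0$ lying entirely in $(1, \infty)$ and of length less than 1. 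Setting $N := \#\{j \geq 0 : Y_{-j} < 1\}$, this yields $\#(\mathcal{V} \Delta \mathcal{W}) \leq 2 + N$ a.s., reducing the lemma to showing $\E(N^2) = \mathcal{O}_\theta(1)$.

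To bound $\E(N^2)$, write $N = \sum_{x \in \mathcal{X} \cap (1,\infty)} I_x$ with $I_x := \mathds{1}_{\mathcal{X} \cap (x, x+1) \neq \emptyset}$. The void probability $(x/(x+1))^\theta$ and its expansion $1 - (x/(x+1))^\theta = \theta/(x+1) + \mathcal{O}_\theta(1/x^2)$, combined with the first-order Mecke formula, give $\E(N) = \theta \int_1^\infty (1 - (x/(x+1))^\theta)/x \, dx < \infty$. For the off-diagonal contribution to $\E(N^2)$, the second-order Mecke formula produces a double integral over ordered pairs $x < y \in (1, \infty)$ of $I_x I_y$ evaluated on the augmented configuration $\mathcal{X} \cup \{x, y\}$. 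I split the domain into $\{y > x + 1\}$, where $I_x$ and $I_y$ factor over disjoint intervals and the contribution is bounded by $\E(N)^2$; and $\{x < y \leq x + 1\}$, where the inserted point $y$ is itself within distance 1 of $x$ in the augmented configuration, forcing $I_x = 1$ deterministically, so the integrand collapses to $1 - (y/(y+1))^\theta \leq \theta/(y+1)$ and the contribution is at most $\theta^3 \int_1^\infty \int_x^{x+1} dy\, dx/(xy(y+1)) = \mathcal{O}_\theta(1)$.

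The second claim follows at once from $\bigl|\sum_{x \in \mathcal{V}} f(x) - \sum_{x \in \mathcal{W}} f(x)\bigr| \leq \|f\|_\infty \cdot \#(\mathcal{V} \Delta \mathcal{W})$, squaring and taking expectation. The point I expect to require the most care is the close-pairs region of the double Mecke integral: although the events $\{I_x = 1\}$ and $\{I_y = 1\}$ are strongly positively correlated when $|y - x| < 1$, the key observation is that in the Mecke-augmented configuration $\mathcal{X} \cup \{x, y\}$ the inserted point $y$ itself triggers $I_x = 1$ automatically, and this is precisely what tames the near-diagonal blow-up and keeps the contribution integrable.
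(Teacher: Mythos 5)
Your argument is correct and structurally identical to the paper's: the same identification $\mathcal{V}\Delta\mathcal{W}\subseteq\{1-X_1\}\cup\{Y_k:\ k\le 1,\ Y_k<1\}$, the same reduction to a second-moment bound on the number $N$ of short spacings to the right of $1$, and the same split of pairs into overlapping and disjoint windows. The only real difference is bookkeeping: the paper derives the densities $f_{X_k}$ and $f_{(X_\ell,X_k)}$ explicitly and sums over indices, observing in its remarks that these sums are just the one- and two-point intensities $\theta/s$ and $\theta^2/(xy)$, whereas you obtain those intensities directly from the first- and second-order Mecke equations --- a shorter and cleaner packaging of the same computation (the paper's route buys only self-containedness). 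Two small points. First, your bound $1-(y/(y+1))^\theta\le\theta/(y+1)$ fails for $\theta\in(0,1)$, since $u\mapsto(1-u)^\theta$ is then concave and the inequality reverses; use $1-(y/(y+1))^\theta\le\theta\log(1+1/y)\le\theta/y$ instead, valid for every $\theta>0$, which still makes the near-diagonal contribution $\mathcal{O}_\theta(1)$. Second, on that near-diagonal region the refinement via ``the inserted point $y$ forces $I_x=1$'' is not actually needed for finiteness: bounding the integrand crudely by $1$ already gives $\theta^2\int_1^\infty\int_x^{x+1}\frac{\mathrm{d}y\,\mathrm{d}x}{xy}\le\theta^2$, which is exactly how the paper handles overlapping pairs. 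Likewise, the a.s.\ identity $\mathcal{V}\setminus\mathcal{W}=\{1-X_1\}$ is more than you need; the inclusion alone yields $\#(\mathcal{V}\Delta\mathcal{W})\le 2+N$.
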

\begin{proof}
We write
\begin{align*}
\mathcal{V} &= \{1-X_1\} \cup (\{  Y_k ,\  k \in \Z \} \setminus  \{  Y_k ,\  k \leq 1 \} )\\
	&=  \{1-X_1\} \cup (\mathcal{W} \setminus  \{  Y_k  : \  k \leq 1, \ Y_k <1 \}).	
\end{align*} 
Thus, 
\[\mathcal{V} \Delta \mathcal{W} \subseteq \{1-X_1\} \cup \{  Y_k  : \  k \leq 1, \ Y_k <1 \} \]
and then it suffices to show that the number of points in $\{  Y_k  : \  k \leq 0, \ Y_k <1 \}$ is square-integrable. We write
\[\E ((\# \{ Y_k  : \  k \leq 0, \ Y_k <1  \})^2) = \sum_{k=0,-1,-2,\dots } \PP (Y_k < 1) + 2 \sum_{k=-1,-2,\dots} \sum_{\ell = 0, -1, \dots , k+1} \PP (Y_\ell < 1 ,\ Y_k <1).\]
For all $k\leq 0$ and all $x\in (0,\infty)$,
\begin{align*}
\PP (Y_k \geq x) &= \int_1^{+\infty} \PP (Y_k \geq x \ \vert \ X_k = s ) f_{X_k} (s) \mathrm{d}s \\ 
	&= \int_1^{+\infty } \exp \left( - \int_s^{s+x} \frac{\theta}{t} \mathrm{d}t  \right) f_{X_k} (s) \mathrm{d}s 
\end{align*}
where, using basic properties of Poisson processes, $f_{X_k}$ (the density function of $X_k$) is given by 
\[\forall s\geq 1, \  f_{X_k} (s) = \frac{\Lambda (s)^{-k}}{(-k)!} \Lambda^\prime (s) \mathrm{e}^{-\Lambda (s)}, \]
with $\Lambda (s) := \int_1^s \frac{\theta}{y}\mathrm{d}y$.
Consequently, 
\begin{align}
\label{eq:carddiffsym}
\begin{split}
\sum_{k=0,-1,-2,\dots } \PP (Y_k < 1) &= \int_1^{+\infty} \left( 1 - \exp \left( - \int_s^{s+1} \frac{\theta}{t} \mathrm{d}t \right) \right) \frac{\theta}{s} \mathrm{d}s \\
	&= \int_1^{+\infty} \left( 1- \left(\frac{s}{s+1}\right)^\theta \right) \frac{\theta}{s} \mathrm{d}s <+\infty
\end{split}
\end{align}
since $1-\left(\frac{s}{s+1}\right)^\theta \underset{s\to\infty}{\sim} \frac{\theta}{s}$. 

\begin{remarque}
$\sum_{k\leq 0 } f_{X_k} (s)$ is the density probability function of having a point of the Poisson process at $s$, which directly gives $\theta/s$.
\end{remarque}

Now, for all $k,\ell$ such that $0\geq \ell > k$, denoting by $f_{(X_\ell , X_k)}$ the density function of the couple $(X_\ell , X_k)$,
\[\PP (Y_\ell < 1 ,\ Y_k <1) = \int_{s=1}^{+\infty} \int_{t=s}^{+\infty} \PP(Y_\ell \leq 1 , Y_k \leq 1 \ \vert \ (X_\ell , X_k) =(s,t)) f_{(X_\ell , X_k)} (s,t) \mathrm{d}t \mathrm{d}s,\]
where $\PP(Y_\ell \leq 1 , Y_k \leq 1 \ \vert \ (X_\ell , X_k) =(s,t))$ is equal to the probability that there exist at least one point of the Poisson process in the interval $(s,s+1]$ and at least one point in the interval $(t,t+1]$, that we will denote by $A_1 ((s,s+1],(t,t+1])$. Moreover, the numbers of points of every Poisson process in disjoint intervals are independent. Thus, denoting $A_j (J)$ the probability that there exists at least $j$ points of the Poisson process in the interval $J$,
\begin{align*}
\PP (Y_\ell < 1 ,\ Y_k <1) &= \int_{s=1}^{+\infty} \int_{t=s}^{s+1} A_1 ((s,s+1],(t,t+1]) f_{(X_\ell , X_k)} (s,t) \mathrm{d}t \mathrm{d}s \\
	&\quad + \int_{s=1}^{+\infty} \int_{t=s+1}^{+\infty} A_1 ((s,s+1]) A_1 ((t,t+1]) f_{(X_\ell , X_k)} (s,t) \mathrm{d}t \mathrm{d}s.
\end{align*}
Let us compute an explicit expression for $f_{(X_\ell , X_k)}$. For $x,y>1$, 
\begin{align*}
\PP (X_\ell \leq x ,\ X_k \leq y) &= \int_1^{+\infty} \PP (X_\ell \leq x ,\ X_k \leq y \ \vert \ X_\ell =s) f_{X_\ell} (s) \mathrm{d}s \\
	&= \int_1^x \PP (X_k \leq y  \ \vert \ X_\ell = s) f_{X_\ell} (s) \mathrm{d}s \\
	&= \int_1^x A_{\ell -k} ((s,y]) f_{X_\ell} (s) \mathrm{d}s.
\end{align*}
Thus, for $x<y$,
\begin{align*}
\frac{\partial^2}{\partial x \partial y} (\PP (X_\ell \leq x ,\ X_k \leq y) ) &= \frac{\partial }{\partial y} ( A_{\ell-k} ((x,y]) f_{X_\ell} (x) ) \\
	&= f_{X_\ell} (x) \frac{\partial }{\partial y} \left( 1- \sum_{m=0}^{\ell -k-1} \frac{ \left(\int_x^y \frac{\theta}{t} \mathrm{d}t \right)^m}{m!} \exp \left( - \int_x^y \frac{\theta}{t} \mathrm{d}t  \right)  \right) \\
	&= f_{X_\ell} (x) \frac{\theta}{y}  \frac{ \left(\int_x^y \frac{\theta}{t} \mathrm{d}t \right)^{\ell -k-1}}{(\ell -k-1)!} \exp \left( - \int_x^y \frac{\theta}{t} \mathrm{d}t  \right) \\
	&= \frac{\theta^2}{xy} \exp \left( - \int_1^y \frac{\theta}{t} \mathrm{d}t  \right) \frac{1}{(-k-1)!} \binom{-k-1}{-\ell} \left(\int_1^x \frac{\theta}{t} \mathrm{d}t \right)^{-\ell} \left(\int_x^y \frac{\theta}{t} \mathrm{d}t \right)^{-k -1 - (-\ell )}.
\end{align*}
Hence
\begin{align*}
\sum_{k=-1,-2,\dots} \sum_{\ell = 0, -1, \dots , k+1} f_{(X_\ell , X_k)} (x,y)
	&= \frac{\theta^2}{xy}.
\end{align*} 
\begin{remarque}
This sum is the density probability function of having points of the Poisson process simultaneously at $x$ and $y$, which corresponds to the product of intensities $ \frac{\theta}{x} \times \frac{\theta}{y} $.
\end{remarque}
We deduce
\begin{align*}
&\sum_{k=-1,-2,\dots} \sum_{\ell = 0, -1, \dots , k+1}  \int_{s=1}^{+\infty} \int_{t=s}^{s+1} A_1 ((s,s+1],(t,t+1]) f_{(X_\ell , X_k)} (s,t) \mathrm{d}t \mathrm{d}s  \\
	&\qquad = \int_{s=1}^{+\infty} \int_{t=s}^{s+1}  \frac{\theta^2}{st} \mathrm{d}t \mathrm{d}s \\
	&\qquad \leq \theta^2 \int_{s=1}^{+\infty} \frac{1}{s^2} \mathrm{d}s = \theta^2 <+\infty
\end{align*}
and
\begin{align*}
&\sum_{k=-1,-2,\dots} \sum_{\ell = 0, -1, \dots , k+1} \int_{s=1}^{+\infty} \int_{t=s+1}^{+\infty} A_1 ((s,s+1]) A_1 ((t,t+1]) f_{(X_\ell , X_k)} (s,t) \mathrm{d}t \mathrm{d}s \\
 	&\qquad = \int_{s=1}^{+\infty} \int_{t=s+1}^{+\infty} A_1 ((s,s+1]) A_1 ((t,t+1])  \frac{\theta^2}{st} \mathrm{d}t \mathrm{d}s \\
	&\qquad \leq \left( \int_{s=1}^{+\infty}   A_1 ((s,s+1]) \frac{\theta}{s} \mathrm{d}s \right)^2 <+\infty
\end{align*}
by \eqref{eq:carddiffsym}. Consequently,
\[\sum_{k=-1,-2,\dots} \sum_{\ell = 0, -1, \dots , k+1} \PP (Y_\ell < 1 ,\ Y_k <1)<+\infty.\]
This shows the first part of the lemma. The second part of the lemma immediately derives from the first part and the classical inequalities
\[\left\vert \sum_{x\in \mathcal{V}} f(x) - \sum_{x\in \mathcal{W}} f(x) \right\vert \leq  \sum_{x\in \mathcal{V} \Delta \mathcal{W}} \vert f(x) \vert   \leq \Vert f \Vert_\infty \# \mathcal{V} \Delta \mathcal{W}.  \]
\end{proof}

A key result for proving Theorem~\ref{thm:asympttau} is the following simple version of the Campbell's theorem:
\begin{theoreme}[Campbell]\label{thm:campbell}
Let $N$ be a Poisson process with intensity $\Lambda$ on $\R$. Let $f:\R \to \R$ be a measurable function, and denote $T:=\sum_{x\in N} f(x)$. Assume $\int_\R \min (\vert f(x) \vert , 1 ) \Lambda (\mathrm{d}x) < + \infty$. \\ Then for all real numbers $t$,
\[\E (\mathrm{e}^{it T}) = \exp \left( \int_\R \left( \mathrm{e}^{it f(x)} -1 \right)\Lambda (\mathrm{d}x)  \right).\]
Moreover, 
\[\E (T) = \int_\R f(x) \Lambda (\mathrm{d}x) \]
and 
\[\V (T) = \int_\R f(x)^2 \Lambda (\mathrm{d}x)\]
if these integrals converge. 
\end{theoreme}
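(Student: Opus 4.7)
The plan is to prove the characteristic function identity first via a standard simple-function approximation, then obtain the moments by differentiating at $t=0$.

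First I would handle the case where $f = \sum_{k=1}^n c_k \mathds{1}_{A_k}$ is a simple function with disjoint measurable sets $A_k$ of finite $\Lambda$-measure. Then $T = \sum_k c_k N(A_k)$ where $N(A_k)$ denotes the number of points of $N$ in $A_k$. By the defining properties of a Poisson process, the random variables $N(A_k)$ are independent, each Poisson-distributed with parameter $\Lambda(A_k)$, so using the classical formula $\E[\mathrm{e}^{is N(A_k)}] = \exp(\Lambda(A_k)(\mathrm{e}^{is}-1))$ and independence, we get
\[\E[\mathrm{e}^{itT}] = \prod_k \exp\bigl(\Lambda(A_k)(\mathrm{e}^{itc_k}-1)\bigr) = \exp\left(\int_\R (\mathrm{e}^{itf(x)}-1)\,\Lambda(\mathrm{d}x)\right).\]

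Next I would extend to general measurable $f$ by approximation. The integrability hypothesis $\int \min(|f|,1)\,\Lambda(\mathrm{d}x) <\infty$ implies $\Lambda(\{|f|>\varepsilon\})<\infty$ for every $\varepsilon>0$, so $f$ can be approximated pointwise by simple functions $f_n$ supported on sets of finite $\Lambda$-measure, with $|f_n|\leq |f|$. Since only finitely many points of $N$ lie in $\{|f|>1\}$ almost surely, the sum $T$ is well-defined as an absolutely convergent sum once one verifies that $\sum_{x\in N} |f(x)|\mathds{1}_{|f(x)|\leq 1}$ converges a.s., which follows from $\E\bigl[\sum_{x\in N}|f(x)|\mathds{1}_{|f(x)|\leq 1}\bigr] = \int_{\{|f|\leq 1\}} |f|\,\mathrm{d}\Lambda<\infty$ (a consequence of the simple-function case applied to $|f|\wedge 1$). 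The pointwise convergence $f_n\to f$ yields $T_n\to T$ a.s., and on both sides of the characteristic function identity the passage to the limit is justified by the uniform bound $|\mathrm{e}^{itf_n(x)}-1|\leq |t|\min(|f(x)|,2)\leq C(t)\min(|f(x)|,1)$ together with dominated convergence.

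For the moment formulas, I would differentiate the identity above at $t=0$ under the integral and expectation signs. When $\int |f|\,\mathrm{d}\Lambda<\infty$, the first derivative of $t\mapsto \int(\mathrm{e}^{itf}-1)\,\mathrm{d}\Lambda$ at $t=0$ equals $i\int f\,\mathrm{d}\Lambda$ with the needed uniform integrability provided by $|\partial_t(\mathrm{e}^{itf}-1)|\leq |f|$, yielding $\E(T)=\int f\,\mathrm{d}\Lambda$. Alternatively this follows directly from Fubini and $\E[N(A)]=\Lambda(A)$ applied to nonnegative simple functions, plus monotone convergence. For the variance, differentiating twice (under the additional hypothesis $\int f^2\,\mathrm{d}\Lambda<\infty$) produces $\V(T)=\int f^2\,\mathrm{d}\Lambda$; equivalently one may reduce to disjoint indicators via simple functions, use $\V(N(A))=\Lambda(A)$ and independence on disjoint sets, and pass to the limit.

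The main obstacle is purely technical: the sum defining $T$ need not converge absolutely if one does not invoke the integrability assumption carefully. The natural splitting is $T = \sum_{x\in N, |f(x)|>1} f(x) + \sum_{x\in N, |f(x)|\leq 1} f(x)$; the first sum is a finite sum since $\Lambda(\{|f|>1\})<\infty$ makes $N\cap\{|f|>1\}$ finite almost surely, and the second is controlled in $L^1$ by Campbell's expectation formula for the bounded function $|f|\wedge 1$. Once this is established the approximation argument above runs without further difficulty.
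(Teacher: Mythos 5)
The paper does not actually prove this statement: Campbell's theorem is quoted as a known tool and the proof is delegated to Kingman's book \cite{kingman1993poisson}. Your argument is the standard self-contained proof (essentially the one in that reference), and it is correct: the simple-function case via independence of the Poisson counts $N(A_k)$ on disjoint sets, the observation that $\Lambda(\{|f|>\varepsilon\})<\infty$ for every $\varepsilon>0$, the splitting of $T$ into the a.s.\ finite sum over $\{|f|>1\}$ and the $L^1$-controlled sum over $\{|f|\le 1\}$, and the passage to the limit by dominated convergence are exactly the right ingredients. Two small points of hygiene: the intermediate bound $\vert \mathrm{e}^{itf_n(x)}-1\vert \le \vert t\vert \min(\vert f(x)\vert ,2)$ is not literally valid for $\vert t\vert <1$ and large $\vert f\vert$ (take $tf_n$ near $\pi$), but the domination you actually use, $\vert \mathrm{e}^{itf_n(x)}-1\vert \le \max(\vert t\vert ,2)\min(\vert f(x)\vert ,1)$, is correct, so nothing breaks; and for the moment formulas, extracting $\E(T)$ and $\V(T)$ by differentiating the characteristic function at $t=0$ requires invoking the classical fact that existence of the second derivative of a characteristic function at $0$ forces a finite second moment (differentiability alone does not give integrability in general) --- your alternative route via Fubini/monotone convergence for $\vert f\vert$ and simple functions is cleaner and avoids this subtlety altogether. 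With those remarks, the proposal stands as a complete proof of the cited theorem.
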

We refer to \cite{kingman1993poisson} for a proof of Theorem~\ref{thm:campbell}.

From Campbell's theorem, we deduce the following lemma, which will be useful in the next section. 
\begin{lem}\label{lem:gaussianT}
For all $u\in \R_+^*$, let $f_u$ be function from $\R$ to $\R$, and let $T_u:=\sum\limits_{y\in \mathcal{X} \cap (0,1)} f_u (y)$. We assume that the following conditions are satisfied:
\begin{itemize}
\item There exists $K>0$ such that for all $u$, $\vert f_u \vert \leq K$.
\item For all $u$, $\int\limits_0^1 \vert f_u (x) \vert \frac{\theta}{x}\mathrm{d}x < + \infty$.
\item $\int_0^1 f_u (x)^2 \frac{\theta}{x} \mathrm{d}x \underset{u\to +\infty}{\longrightarrow} + \infty$.
\end{itemize}
Then as $u\to +\infty$,
\[ \frac{T_u - \E (T_u)}{\sqrt{\V (T_u) }} \overset{d}{\longrightarrow} \mathcal{N} (0,1).\]
\end{lem}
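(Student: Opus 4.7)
The plan is to use Campbell's theorem to obtain an exact closed form for the characteristic function of the normalized variable, and then to show convergence to the standard Gaussian characteristic function via a uniform Taylor expansion.

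First, fix $t \in \R$ and write $\mu_u = \E(T_u)$ and $\sigma_u^2 = \V(T_u)$. By the third hypothesis, $\sigma_u \to +\infty$ as $u \to +\infty$. Applying Theorem~\ref{thm:campbell} to the function $t f_u/\sigma_u$ (whose absolute integrability is inherited from that of $f_u$), and then multiplying by $\e^{-it\mu_u/\sigma_u}$ which, by the mean formula in Theorem~\ref{thm:campbell}, equals $\exp\bigl(-\int_0^1 (it f_u(x)/\sigma_u)\,\theta/x\,\mathrm{d}x\bigr)$, I obtain
\[
\E\!\left(\exp\!\left(it\,\frac{T_u-\mu_u}{\sigma_u}\right)\right)
= \exp\!\left(\int_0^1 g_u(x)\,\frac{\theta}{x}\,\mathrm{d}x\right),
\qquad g_u(x) := \e^{it f_u(x)/\sigma_u} - 1 - \frac{it f_u(x)}{\sigma_u}.
\]

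Next I would use the elementary estimate: for each $\delta>0$ there exists $\varepsilon(\delta)$ with $\varepsilon(\delta)\to 0$ as $\delta \to 0$, such that
\[
\Bigl|\e^{iy} - 1 - iy + \tfrac{y^2}{2}\Bigr| \leq \varepsilon(\delta)\, y^2 \quad \text{whenever } |y|\leq \delta,
\]
obtained for instance from the integral remainder in Taylor's formula. Since $|f_u|\leq K$, we have $|t f_u(x)/\sigma_u|\leq K|t|/\sigma_u$ uniformly in $x$, and this bound tends to $0$. Consequently, setting $\delta_u := K|t|/\sigma_u$,
\[
\int_0^1 g_u(x)\,\frac{\theta}{x}\,\mathrm{d}x
= -\frac{t^2}{2\sigma_u^2}\int_0^1 f_u(x)^2\,\frac{\theta}{x}\,\mathrm{d}x \;+\; R_u
= -\frac{t^2}{2} \;+\; R_u,
\]
where the remainder satisfies
\[
|R_u| \;\leq\; \varepsilon(\delta_u)\cdot\frac{t^2}{\sigma_u^2}\int_0^1 f_u(x)^2\,\frac{\theta}{x}\,\mathrm{d}x
\;=\; \varepsilon(\delta_u)\, t^2 \;\underset{u\to +\infty}{\longrightarrow}\; 0.
\]
Therefore the characteristic function of $(T_u-\mu_u)/\sigma_u$ converges pointwise to $\e^{-t^2/2}$, and Lévy's continuity theorem yields the desired convergence in distribution to $\mathcal{N}(0,1)$.

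The argument is essentially routine once Campbell's formula is in hand; the only delicate point is ensuring that the second-order Taylor expansion is applied with a remainder that is $o(y^2)$ \emph{uniformly} in $x$. This uniformity is guaranteed precisely by the boundedness hypothesis on $f_u$ combined with $\sigma_u \to +\infty$, which is the role played by the first and third hypotheses of the lemma (the second hypothesis being needed only to make the mean integral well-defined so that Campbell's theorem applies).
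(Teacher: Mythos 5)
Your proof is correct and follows essentially the same route as the paper: apply Campbell's theorem to get the characteristic function of the normalized variable, then use the uniform bound $|f_u|\leq K$ together with $\sigma_u\to+\infty$ to control the Taylor remainder beyond second order (the paper bounds the tail $\sum_{k\geq 3}$ of the exponential series explicitly, which is the same estimate as your $\varepsilon(\delta_u)y^2$ bound). No gaps; the only cosmetic remark is that the second hypothesis, combined with boundedness, is also what makes the variance integral finite for each fixed $u$, not just the mean.
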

\begin{proof}
Denote $N_u:= \frac{T_u - \E (T_u)}{\sqrt{\V (T_u) }}$, and $v_u:= \sqrt{\V (T_u) }$.
By Theorem~\ref{thm:campbell} the Fourier transform of $N_u$ is given by 
\begin{align*}
\forall t \in \R , \ \E \left( \mathrm{e}^{it N_u} \right) &= \exp \left( \int_0^1 \left(\mathrm{e}^{i \frac{t}{v_u} f_n (x)} -1\right) \frac{\theta}{x} \mathrm{d}x \right) \mathrm{e}^{-i \frac{t}{v_u} \E (T_u )} \\
	&= \exp \left( - \frac{t^2}{2} + \int_0^1 \left( \sum_{k=3}^{+\infty} \frac{1}{k!} \left( i \frac{t}{v_u} f_u (x) \right)^k  \right) \frac{\theta}{x} \mathrm{d}x \right)
\end{align*}
with 
\begin{align*}
\left\vert \sum_{k=3}^{+\infty} \frac{1}{k!} \left( i \frac{t}{v_u} f_u (x) \right)^k   \right\vert &\leq \sum_{k=3}^{+\infty} \frac{\vert t \vert^k K^{k-2} f_u (x)^2}{k! v_u^k} \\
	&\leq \frac{\vert t \vert^3 K f_u(x)^2}{v_u^3} \exp \left(\frac{\vert t \vert K}{v_u}\right).
\end{align*}
Thus
\begin{align*}
	\int_0^1 \left\vert \sum_{k=3}^{+\infty} \frac{1}{k!} \left( i \frac{t}{v_u} f_u (x) \right)^k  \right\vert  \frac{\theta}{x} \mathrm{d}x  &\leq \exp \left(\frac{\vert t \vert K}{v_u}\right) \frac{\vert t \vert^3 K}{v_u^3} \int_0^1  f_u (x)^2 \frac{\theta}{x} \mathrm{d}x \\
	&= \exp \left(\frac{\vert t \vert K}{v_u}\right) \frac{\vert t \vert^3 K}{v_u} \\
	&\underset{u\to +\infty}{=} \exp ( o(1) )o(1) = o (1)
\end{align*} 
and finally
\begin{align*}
\E \left( \mathrm{e}^{it N_u} \right) &\underset{u\to +\infty}{=} \exp \left( -\frac{t^2}{2} \right) \exp (o (1)) \\
	&= \exp \left( -\frac{t^2}{2} \right) + o (1).
\end{align*}
\end{proof}

\section{Limiting point process related to permutation matrices. Proof of Theorem~\ref{thm:asympttau}}
\label{sec:limitMP}

Let us introduce the random variable $X(s,t)$ which counts the number of points, between the positive real numbers $s$ and $t$, of the limiting point process related to permutation matrices (without modification), i.e
\[X(s,t) = t-s - \sum_{j=1}^{+\infty } (\{ty_j\} - \{sy_j\}).\]
Here, we choose to generate the ensemble $\{y_j , \ j\geq 1\}$ using the continuous analog of the Feller coupling described above.

Let us begin with three lemmas before stating results about $X(s,t)$. Since their proofs are technical we postpone them in Appendix. 

\begin{lem}\label{lem:calcul}
Let $n$ be a positive integer.
\begin{itemize}
\item \[\int_0^1 \frac{\{nx\}}{x} \mathrm{d}x \underset{n\to \infty}{=} \frac{1}{2} \log n + \mathcal{O}(1).\]
\item \[\int_0^1 \{nx\} \log x \mathrm{d}x \underset{n\to \infty}{=} -\frac{1}{2} + \frac{1}{12n} \log n + \mathcal{O}\left( \frac{1}{n} \right).\]
\end{itemize}
\end{lem}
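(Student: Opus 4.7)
The plan is to reduce both integrals to sums over unit intervals via the change of variables $u = nx$, then perform explicit closed-form computations and asymptotic expansions on $[0,n]$ where the fractional part is genuinely periodic.

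For part (i), the substitution gives $\int_0^1 \frac{\{nx\}}{x}\,dx = \int_0^n \frac{\{u\}}{u}\,du$. Splitting $[0,n)$ into unit intervals and using $\{u\} = u-k$ on $[k,k+1)$ yields $\int_0^n \frac{\{u\}}{u}\,du = n - \sum_{k=1}^{n-1} k\log\frac{k+1}{k}$. An Abel-type telescoping rearrangement collapses the last sum to $(n-1)\log n - \log((n-1)!)$, so the whole integral equals $n - (n-1)\log n + \log((n-1)!)$. Inserting Stirling's expansion, which after writing $(n-1)\log(n-1) = (n-1)\log n - 1 + \mathcal{O}(1/n)$ takes the form $\log((n-1)!) = (n-1)\log n - n + \frac{1}{2}\log n + \mathcal{O}(1)$, the $(n-1)\log n$ and $n$ contributions cancel pairwise, leaving exactly $\frac{1}{2}\log n + \mathcal{O}(1)$.

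For part (ii), the substitution gives $\int_0^1 \{nx\}\log x\,dx = \frac{1}{n}\int_0^n \{u\}\log u\,du - \frac{\log n}{2}$, using $\int_0^n \{u\}\,du = n/2$. I would decompose $\{u\} = \frac{1}{2} + (\{u\}-\frac{1}{2})$. The mean term contributes $\frac{1}{2n}(n\log n - n) = \frac{\log n}{2} - \frac{1}{2}$, which after subtracting $\frac{\log n}{2}$ leaves exactly the leading $-\frac{1}{2}$. It then remains to show $\int_0^n (\{u\}-\frac{1}{2})\log u\,du = \frac{1}{12}\log n + \mathcal{O}(1)$. I would compute the $k=0$ piece $\int_0^1 (u-\frac{1}{2})\log u\,du = \frac{1}{4}$ directly, and for each $k \geq 1$ recenter $u = k+\frac{1}{2}+v$ on $[k,k+1)$ and Taylor-expand $\log\bigl(k+\tfrac{1}{2}+v\bigr) = \log(k+\tfrac{1}{2}) + \frac{v}{k+1/2} + \mathcal{O}(v^2/k^2)$. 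The constant and higher even-order-in-$v$ terms integrate to zero against $v$ by symmetry; the linear correction yields $\int_{-1/2}^{1/2} \frac{v^2}{k+1/2}\,dv = \frac{1}{12(k+1/2)}$; higher odd terms contribute $\mathcal{O}(1/k^3)$. Summing $\sum_{k=1}^{n-1} \frac{1}{12(k+1/2)} = \frac{1}{12}\log n + \mathcal{O}(1)$ closes the estimate, and division by $n$ delivers the $\frac{1}{12n}\log n$ correction.

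The main subtlety is extracting the coefficient $\frac{1}{12}$ in part (ii) with an $\mathcal{O}(1/n)$ rather than $\mathcal{O}(\log n / n)$ remainder: the factor $\frac{1}{12}$ is really the symmetric second moment $\int_{-1/2}^{1/2} v^2\,dv$, emerging naturally from the centring on each unit interval, and one must keep strict control both of the Taylor remainders and of the $k=0$ interval (where $\log$ is singular so the recentring cannot be used). An alternative route through integration by parts against the antiderivative $\frac{1}{2}(\{u\}^2-\{u\})$ also works but still forces one to confront the non-vanishing mean $-\frac{1}{12}$ of that antiderivative against $\log u$, which is exactly where the same $\frac{1}{12}$ reappears. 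Beyond this point the computations are routine.
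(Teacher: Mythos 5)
Your computations are correct, and the overall strategy (reduce to the interval $[0,n]$ by the substitution $u=nx$, then analyse unit intervals) is the same as the paper's; but the way you extract the constants differs enough to be worth noting. For the first integral the paper does not use Stirling at all: it simply squeezes $\int_0^1\frac{t}{t+k}\,\mathrm{d}t$ between $\frac{1}{2(k+1)}$ and $\frac{1}{2k}$ and sums the harmonic bounds, which is shorter than your exact telescoping of $\sum_k k\log\frac{k+1}{k}$ followed by Stirling (though your route gives an exact closed form along the way). For the second integral the roles are reversed: the paper computes $\int_0^1 t\log(t+k)\,\mathrm{d}t$ exactly by parts, invokes Stirling for $\log(n!)$, and obtains the coefficient $\frac{1}{12}$ as $\frac14-\frac16$ via the expansion $\frac12-k+k^2\log\bigl(1+\frac1k\bigr)=\frac{1}{3k}+\mathcal{O}\bigl(\frac{1}{k^2}\bigr)$; you instead subtract the mean $\frac12$ of the fractional part and recenter each unit interval at its midpoint, so that the constant and even-order Taylor terms vanish by symmetry and $\frac{1}{12}$ appears directly as the second moment $\int_{-1/2}^{1/2}v^2\,\mathrm{d}v$, with no Stirling needed in this part and with the summable $\mathcal{O}(1/k^3)$ remainders giving the required $\mathcal{O}(1)$ (hence $\mathcal{O}(1/n)$ after division). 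Your treatment of the singular $k=0$ interval and of the remainder terms is adequate, so the proof is complete; conceptually your version of part (ii) is arguably more transparent about where $\frac{1}{12}$ comes from, while the paper's version of part (i) is the more economical of the two.
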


\begin{lem}\label{lem:calcul2}
Let $\ell \in \N^*$. Then
\begin{align*}
\sum_{k=1}^{n-1} \left( 2\left\{ \ell \frac{k}{n} \right\} -1 \right) \log \frac{k}{n} &\underset{n\to\infty}{=} \left[ \frac{\ell}{2} + 2 \sum_{m=1}^{\ell -1} \frac{m}{\ell} \log \frac{m}{\ell} \right] n - \frac{1}{2} \log n + \mathcal{O}(1) \\
\sum_{k=1}^{n-1} \left( 2\left\{ -\ell \frac{k}{n} \right\} -1 \right) \log \frac{k}{n} &\underset{n\to\infty}{=} -\left[ \frac{\ell}{2} + 2 \sum_{m=1}^{\ell -1} \frac{m}{\ell} \log \frac{m}{\ell}\right] n + \frac{1}{2} \log n + \mathcal{O}(1)
\end{align*}
\end{lem}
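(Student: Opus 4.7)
The plan is to decompose $\{\ell k/n\}=\ell k/n-\lfloor\ell k/n\rfloor$ so that
\[
\sum_{k=1}^{n-1}(2\{\ell k/n\}-1)\log(k/n)=\frac{2\ell}{n}\sum_{k=1}^{n-1}k\log(k/n)-2\sum_{k=1}^{n-1}\lfloor\ell k/n\rfloor\log(k/n)-\sum_{k=1}^{n-1}\log(k/n).
\]
The third piece equals $\log((n-1)!)-(n-1)\log n$, which by Stirling's formula is $-n+\frac{1}{2}\log n+\mathcal{O}(1)$; this is the sole origin of the $-\frac{1}{2}\log n$ term in the final answer. The first piece is handled via Stirling applied to $\sum_{k=1}^{n-1} k\log k$: one obtains $\sum_{k=1}^{n-1} k\log(k/n)=-n^2/4+\mathcal{O}(\log n)$, so this piece contributes $-\ell n/2+\mathcal{O}_\ell(1)$.

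The main calculation is the floor sum. I would use the layer-cake identity $\lfloor \ell k/n\rfloor=\sum_{j=1}^{\ell-1}\mathds{1}_{k\geq\lceil jn/\ell\rceil}$, valid for $1\leq k\leq n-1$, so the floor sum becomes $\sum_{j=1}^{\ell-1}\bigl[\log((n-1)!)-\log((K_j-1)!)-(n-K_j)\log n\bigr]$ with $K_j:=\lceil jn/\ell\rceil=jn/\ell+\delta_j$, $\delta_j\in[0,1)$. Applying Stirling to both factorials and expanding $\log(K_j-1)=\log n+\log(j/\ell)+\mathcal{O}_\ell(1/n)$, all $\log n$-type contributions (in particular the two $\frac{1}{2}\log n$ Stirling corrections) cancel exactly within each inner sum, leaving
\[
\sum_{k=K_j}^{n-1}\log(k/n)=-n(1-j/\ell)-(jn/\ell)\log(j/\ell)+\mathcal{O}_\ell(1).
\]
Summing over $j$ and using $\sum_{j=1}^{\ell-1}(1-j/\ell)=(\ell-1)/2$, the floor sum equals $-n(\ell-1)/2-n\sum_{m=1}^{\ell-1}(m/\ell)\log(m/\ell)+\mathcal{O}_\ell(1)$. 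Multiplied by $-2$ and combined with the two other pieces (so that the coefficients of $n$ add as $-\ell/2+(\ell-1)+1=\ell/2$), this produces the first claimed asymptotic.

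The second identity follows from the first via the pointwise relation $2\{-x\}-1=-(2\{x\}-1)-2\,\mathds{1}_{x\in\Z}$ (both sides equal $-1$ at integer $x$ and are opposite in sign otherwise). The exceptional contribution is supported on the $k$ with $\ell k/n\in\Z$, i.e., the multiples of $n/\gcd(\ell,n)$ inside $\{1,\dots,n-1\}$, of which there are at most $\gcd(\ell,n)-1\leq\ell-1$; each contributes $|\log(k/n)|\leq\log\ell$, so the correction is $\mathcal{O}_\ell(1)$, yielding the second formula (with sign reversal of both the $\mathcal{O}(n)$ coefficient and of the $\frac{1}{2}\log n$ term). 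The main obstacle is the bookkeeping in the floor sum: the two $\frac{1}{2}\log n$ contributions coming from Stirling at the endpoints of each inner sum must cancel so that no spurious $\log n$ appears, which would otherwise destroy the precise $-\frac{1}{2}\log n$ prescribed by the lemma.
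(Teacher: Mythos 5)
Your proof is correct and follows essentially the same route as the paper: the same layer-cake decomposition of $\{\ell k/n\}$ into its linear part minus indicator sums, combined with the same three building-block asymptotics for $\sum_k k\log(k/n)$, $\sum_{k\geq jn/\ell}\log(k/n)$ and $\sum_k\log(k/n)$. The only differences are cosmetic: you evaluate these elementary sums via Stirling's formula applied to factorials where the paper uses summation by parts, and you deduce the second display from the first via the reflection identity $2\{-x\}-1=-(2\{x\}-1)-2\,\mathds{1}_{x\in\Z}$ (with an $\mathcal{O}_\ell(1)$ exceptional contribution) where the paper simply redoes the computation with ceiling-type indicators.
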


\begin{lem}\label{lem:calcul3}
Let $p,q$ be two positive integers. 
Then
\begin{align*}
\int_0^1 \frac{(\{ px \} - \{qx\})^2}{x} \mathrm{d}x &= -2 (p-q) \int_0^1 (\{px \} - \{qx \})\log x \mathrm{d}x -\sum_{k=1}^{p-1} \left( 2\left\{ q\frac{k}{p} \right\} - 1 \right) \log \left( \frac{k}{p} \right) \\
	&\qquad - \sum_{j=1}^{q-1} \left( 2\left\{ p\frac{j}{q} \right\} - 1 \right) \log \left( \frac{j}{q} \right) - 2 \sum_{m=1}^{\gcd (p,q) -1} \log \left(\frac{m}{\gcd (p,q)}  \right). 
\end{align*}
\end{lem}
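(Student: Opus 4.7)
The strategy is integration by parts after splitting $[0,1]$ at the discontinuities of $\phi(x) := \{px\}-\{qx\}$. On each continuous piece, $\phi$ is affine with slope $p-q$, so the integration by parts $u=\phi^2$, $dv=dx/x$ produces exactly the term $-2(p-q)\int_0^1(\{px\}-\{qx\})\log x\,dx$, and the real work is to identify the jump contributions correctly.

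Let $d=\gcd(p,q)$ and write $p=dp'$, $q=dq'$ with $\gcd(p',q')=1$. The jumps of $\phi$ occur at the points of $\{k/p:1\le k\le p-1\}\cup\{j/q:1\le j\le q-1\}$, and $k/p=j/q$ exactly when $k=p'm$, $j=q'm$ for some $1\le m\le d-1$. At a non-common point $k/p$ the function $\{px\}$ drops by $1$ while $\{qx\}$ is continuous, so $\phi(k/p^-)=1-\{qk/p\}$ and $\phi(k/p^+)=-\{qk/p\}$; analogously at $j/q$. At a common point $m/d$ both fractional parts drop by $1$ simultaneously, so $\phi$ is continuous there.

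Listing the jumps as $0=x_0<x_1<\cdots<x_N=1$ and summing the local integrations by parts, the boundary terms telescope to
\[\sum_{i=0}^{N-1}\bigl[\phi^2\log x\bigr]_{x_i^+}^{x_{i+1}^-}=\phi(1^-)^2\log 1-\lim_{x\to 0^+}\phi(x)^2\log x+\sum_{i=1}^{N-1}\bigl(\phi(x_i^-)^2-\phi(x_i^+)^2\bigr)\log x_i.\]
The endpoint at $1$ is zero because $\log 1=0$, and the endpoint at $0$ is zero because $\phi(x)=(p-q)x$ on a neighborhood of $0$, so $\phi^2\log x=O(x^2\log x)\to 0$. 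A direct computation using the expressions above yields $\phi(x_i^-)^2-\phi(x_i^+)^2=1-2\{qk/p\}$ when $x_i=k/p$ is a non-common jump, $1-2\{pj/q\}$ when $x_i=j/q$ is a non-common jump, and $0$ at common points.

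It then remains to reorganize these jump contributions as sums over all $k\in\{1,\dots,p-1\}$ and $j\in\{1,\dots,q-1\}$. At a common point $k/p=m/d$ one has $\{qk/p\}=\{q'm\}=0$, so the naive summand $(1-2\{qk/p\})\log(k/p)$ equals $\log(m/d)$; the same happens in the $j$-sum. Since each common point $m/d$ contributes $0$ to the true jump sum but $\log(m/d)$ to each of the two naive sums, we must subtract $2\sum_{m=1}^{d-1}\log(m/d)$. Combining everything produces precisely the identity in the statement. The only genuinely delicate step is this bookkeeping at common jump points; once the three cases are separated and the overcounting corrected, the algebra is mechanical.
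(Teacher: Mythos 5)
Your proof is correct and follows essentially the same route as the paper: an integration by parts whose derivative term yields $-2(p-q)\int_0^1(\{px\}-\{qx\})\log x\,\mathrm{d}x$, with the jump contributions at points of $E_p\cup E_q$ computed case by case and the common points $m/\gcd(p,q)$ (where the jumps cancel) accounting for the correction $-2\sum_{m=1}^{\gcd(p,q)-1}\log(m/\gcd(p,q))$. Your explicit treatment of the endpoint behavior at $0$ and of the overcounting at common jump points is a slightly more detailed version of what the paper leaves as ``easy to check.''
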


Let $a,b >0$. Define $f_{a,b} : x \mapsto \{(a+b) x \} - \{ax\}$, and denote $S:= \sum\limits_{y\in \mathcal{V}} f_{a,b} (y) = b- X(a,a+b)$ and $T:= \sum\limits_{y\in \mathcal{W}} f_{a,b} (y)$. 

\subsection{Approximation of $S$ by $T$}

Using Lemma~\ref{lem:Feller},
\begin{equation}\label{eq:approxE}
\vert \E (S)- \E (T) \vert \leq \E \vert S - T \vert \leq \Vert f_{a,b} \Vert_\infty C(\theta ) \leq C ( \theta )
\end{equation}
and
\begin{equation}\label{eq:approxVar}
\vert \sqrt{\V (S)} - \sqrt{ \V (T)} \vert \leq \sqrt{ \V ( S - T ) } \leq \sqrt{  \E ((S-T)^2) } \leq \Vert f_{a,b} \Vert_\infty \sqrt{C(\theta )} \leq \sqrt{C(\theta )}.  
\end{equation}
Therefore, as soon as $\V (T) \to +\infty$ we will get
\[\frac{\E (S) - S}{\sqrt{\V (S)}} - \frac{\E (T) - T}{\sqrt{\V (T)}} \overset{\PP}{\longrightarrow} 0. \]
Moreover, it is easy to check that $\int_0^1 \frac{\vert f_{a,b}  (x)\vert}{x} \mathrm{d}x < +\infty$, as for all $x\in \left( 0 , \frac{1}{a+b} \right)$ we have $f_{a,b} (x) = bx$. We deduce by Lemma~\ref{lem:gaussianT} and Slusky's theorem that as soon as $\V (T) \to +\infty$, we have $\frac{\E (S) - S}{\sqrt{\V (S)}} \overset{d}{\longrightarrow} \mathcal{N}(0,1)$, \emph{i.e}
$\frac{X(a,a+b) - \E (X(a,a+b))}{\sqrt{\V (X(a,a+b))}} \overset{d}{\longrightarrow} \mathcal{N}(0,1)$. \\
Furthermore, Theorem~\ref{thm:campbell} applies and gives 
\[\E (T) =\theta \int_0^1 \frac{f_{a,b}(x)}{x}  \mathrm{d}x \]
and 
\[\V (T) =\theta \int_0^1 \frac{f_{a,b}(x)^2}{x} \mathrm{d}x.\] 

\subsection{Proof of point $(i)$ of Theorem~\ref{thm:asympttau}}

\subsubsection{Proof for $a,b \in \N^*$}

Assume $a,b\in \N^*$. Then Lemmas~\ref{lem:calcul},~\ref{lem:calcul2} and~\ref{lem:calcul3} provides all we need for the computation of the asymptotics of $\E (T)$ and $\V (T)$ when $b$ tends to infinity. \\

Denote $p=a$ and $q=a+b$. 
If $a$ is fixed and $b$ goes to infinity, then using Lemma~\ref{lem:calcul},
\begin{equation}
\E (T) = \frac{\theta}{2} \log b + \mathcal{O}_\theta (1)
\end{equation}
and
\begin{align*}
-2(p-q) \int_0^1 (\{px \} - \{qx \})\log x \mathrm{d}x &= 2(q-p) \left( \int_0^1 \{px \} \log x \mathrm{d}x + \frac{1}{2} - \frac{1}{12} \frac{\log q}{q} + \mathcal{O}\left(\frac{1}{q}\right) \right)  \\
	&=  \frac{q-p}{q} \left(\left( 1 + 2 \int_0^1 \{px \} \log x \mathrm{d}x \right) q - \frac{1}{6} \log q + \mathcal{O}(1) \right) 
\end{align*}
with $\frac{q-p}{q} = 1 + \mathcal{O}(\frac{1}{q})$.
Besides, clearly since $p$ is fixed
\[\sum_{k=1}^{p-1} \left( 2\left\{ k\frac{q}{p} \right\} - 1 \right) \log \left( \frac{k}{p} \right) = \mathcal{O}(1), \]
and using Lemma~\ref{lem:calcul2},
\[\sum_{j=1}^{q-1} \left( 2\left\{ p\frac{j}{q} \right\} - 1 \right) \log \left( \frac{j}{q} \right) = \left[ \frac{p}{2} + 2 \sum_{m=1}^{p-1} \frac{m}{p} \log \frac{m}{p} \right]q - \frac{1}{2} \log q + \mathcal{O}(1). \]
Furthermore, as $\gcd(p,q)\leq p$,
\[\sum_{m=1}^{\gcd (p,q) -1} \log \left(\frac{m}{\gcd (p,q)}  \right) = \mathcal{O}(1).\]
We deduce by Lemma~\ref{lem:calcul3}
\begin{align*}
\int_0^1 \frac{(\{px \} - \{qx\})^2}{x} \mathrm{d}x &= \left( 1 + 2 \int_0^1 \{px \} \log x \mathrm{d}x  - \frac{p}{2} - 2 \sum_{m=1}^{p-1} \frac{m}{p} \log \frac{m}{p} \right) q + \left( - \frac{1}{6} + \frac{1}{2} \right) \log q + \mathcal{O}(1) \\
	&= \frac{1}{3} \log q + \mathcal{O}(1).
\end{align*}
Indeed,
\begin{align*}
\int_0^1 \{px \} \log x \mathrm{d}x &= \sum_{k=0}^{p-1} \int_{\frac{k}{p}}^{\frac{k+1}{p}} (px-k) \log x \mathrm{d}x \\
	&= p \int_0^1 x \log x \mathrm{d}x - \sum_{k=1}^{p-1} \left(\sum_{j=1}^k 1\right)\left(\left( \frac{k+1}{p} \log \frac{k+1}{p} - \frac{k+1}{p} \right) - \left(\frac{k}{p}\log \frac{k}{p} - \frac{k}{p} \right)\right) \\
	&= -\frac{p}{4} + \sum_{j=1}^p \left( \frac{j}{p} \log \frac{j}{p} - \frac{j}{p} \right) +p \\
	&= \frac{p}{4} +\sum_{j=1}^{p-1} \frac{j}{p} \log \frac{j}{p} - \frac{1}{2}.
\end{align*}
It follows 
\begin{equation}
\V (T) = \frac{\theta}{3} \log b + \mathcal{O}_\theta (1), 
\end{equation}
and as $\left\vert \sqrt{\V (T)} -\sqrt{\V (S)} \right\vert = \mathcal{O}_\theta (1)$, then
\begin{align*}
\V (X(a,a+b))= \V (S) &= \left(\left( \frac{\theta}{3} \log b + \mathcal{O}_\theta (1) \right)^{1/2} + \mathcal{O}_\theta (1) \right)^2 \\
	&= \frac{\theta}{3} \log b + \mathcal{O}_\theta ( \sqrt{\log b} ).
\end{align*}

From the previous paragraph we deduce
\begin{equation}
\frac{X(a,a+b) - \E (X(a,a+b))}{\sqrt{\V (X(a,a+b))}} \underset{b \to \infty}{\overset{d}{\longrightarrow}} \mathcal{N}(0,1).
\end{equation}

\subsubsection{Generalization for all $a,b$}

Assume now $a,b$ to be positive real numbers, with $b>1$. We have the inequalities
\begin{align*}
0 \leq X(a,a+b) - X(\lceil a \rceil , \lfloor a \rfloor + \lfloor b \rfloor ) &= X(a, \lceil a \rceil) + X ( \lfloor a \rfloor + \lfloor b \rfloor , a+b) \\
	&\leq X (a , \lceil a \rceil) + X (\lfloor a \rfloor + \lfloor b \rfloor , \lfloor a \rfloor + 2  + \lfloor b \rfloor ),
\end{align*}
with 
\begin{equation}\label{eq:EX}
\E (  X (\lfloor a \rfloor + \lfloor b \rfloor , \lfloor a \rfloor + 2  + \lfloor b \rfloor ) ) \underset{b\to \infty}{=} \mathcal{O}_\theta (1)
\end{equation}
by \eqref{eq:approxE} and Lemma~\ref{lem:calcul}.  
Moreover, 
\begin{align*}
&\left\vert \sqrt{\V (X(a,a+b))} - \sqrt{\V (X(\lceil a \rceil , \lfloor a \rfloor + \lfloor b \rfloor ))} \right\vert \\
	&\quad \leq \sqrt{\V (X(a,a+b) - X(\lceil a \rceil , \lfloor a \rfloor + \lfloor b \rfloor )) } \\
	&\quad = \sqrt{\V (X(a,\lceil a \rceil ) + X ( \lfloor a \rfloor + \lfloor b \rfloor , a+b) } \\
	&\quad \leq \sqrt{3} \sqrt{ \V (X(a,\lceil a \rceil )) + \V (X ( \lfloor a \rfloor + \lfloor b \rfloor , \lfloor a \rfloor +b)) + \V (X ( \lfloor a \rfloor + b , a+b)) } \\
	&\quad \leq 3 + \V (X(a,\lceil a \rceil )) + \V (X ( \lfloor a \rfloor + \lfloor b \rfloor , \lfloor a \rfloor +b)) + \V (X ( \lfloor a \rfloor + b , a+b)).
\end{align*}
Let us show that $\V (X ( \lfloor a \rfloor + \lfloor b \rfloor , \lfloor a \rfloor +b)) + \V (X ( \lfloor a \rfloor + b , a+b)) = \mathcal{O}_\theta (1)$. \\
For the first term, from \eqref{eq:approxVar} it is enough to show 
\begin{equation}
\int_0^1 \frac{(\{ (\lfloor a \rfloor +b)x \} -  \{ (\lfloor a \rfloor + \lfloor b \rfloor)x \})^2   }{x} \mathrm{d}x \underset{b\to \infty}{=} \mathcal{O}(1).
\end{equation}
For the sake of simplicity, denote $m= \lfloor a \rfloor$. We have, for all $x\in [0,1)$, 
\begin{align*}
\{ (m + b)x \} -  \{ (m + \lfloor b \rfloor )x \} &= \{ (m + b)x - (m + \lfloor b \rfloor )x \} - \mathds{1}_{ \{ (m + b)x - (m + \lfloor b \rfloor )x \} + \{ (m + \lfloor b \rfloor )x \} > 1} \\
	&= \{ b \} x - \mathds{1}_{\{ b \} x + \{ (m + \lfloor b \rfloor )x \} > 1} 
\end{align*}
so that 
\[ \int_0^1 \frac{(\{ (\lfloor a \rfloor +b)x \} -  \{ (\lfloor a \rfloor + \lfloor b \rfloor)x \})^2   }{x} \mathrm{d}x \underset{b\to \infty}{=} I_b + \mathcal{O}(1) \]
where $I_b := \int_0^1 \frac{1}{x} \mathds{1}_{\{ b \} x + \{ (m + \lfloor b \rfloor )x \} > 1}  \mathrm{d}x $. We want to show $I_b = \mathcal{O}(1)$. We cut the integral as follows:
\[I_b = \int_0^{1/(m+\lfloor b \rfloor )} \frac{1}{x} \mathds{1}_{\{ b \} x + \{ (m + \lfloor b \rfloor )x \} > 1} \mathrm{d}x  + \sum_{k=1}^{m+\lfloor b \rfloor -1} \int_{k/(m+\lfloor b\rfloor)}^{(k+1)/(m+\lfloor b\rfloor)} \frac{1}{x} \mathds{1}_{\{ b \} x + \{ (m + \lfloor b \rfloor )x \} > 1} \mathrm{d}x.\]
We have 
\begin{align*}
\int_0^{1/(m+\lfloor b \rfloor )} \frac{1}{x} \mathds{1}_{\{ b \} x + \{ (m + \lfloor b \rfloor )x \} > 1} \mathrm{d}x &= \int_0^{1/(m+\lfloor b \rfloor )} \frac{1}{x} \mathds{1}_{ (m + b)x > 1} \mathrm{d}x \\
	&= \int_{1/(m+b)}^{1/(m+\lfloor b \rfloor )} \frac{1}{x} \mathrm{d}x = \log \left( \frac{m + b}{m+ \lfloor b \rfloor} \right) \\
	&\underset{b \to \infty}{\longrightarrow} 0,
\end{align*}
and for all $k \geq 1$, 
\begin{align*}
\int_{k/(m+\lfloor b\rfloor)}^{(k+1)/(m+\lfloor b\rfloor)} \frac{1}{x} \mathds{1}_{\{ b \} x + \{ (m + \lfloor b \rfloor )x \} > 1} \mathrm{d}x &= \int_{k/(m+\lfloor b\rfloor)}^{(k+1)/(m+\lfloor b\rfloor)}   \frac{1}{x} \mathds{1}_{x  > (k+1)/(m+b)} \mathrm{d}x \\
	&\leq  \frac{1}{k/(m+\lfloor b\rfloor )} \int_{(k+1)/(m+b)}^{(k+1)/(m+\lfloor b\rfloor)} \mathrm{d}x \\
	&= \frac{k+1}{k} (m + \lfloor b \rfloor) \left( \frac{1}{m + \lfloor b \rfloor } - \frac{1}{m+b} \right)\\
	&\leq \frac{2}{m + \lfloor b \rfloor}.
\end{align*}
Hence $I_b \leq o(1) + 2 = \mathcal{O}(1)$. 
A very similar computation gives $\V (X ( \lfloor a \rfloor + b , a+b)) = \mathcal{O}_\theta (1)$.\\ 
We deduce
\[ \sqrt{\V (X(a,a+b))} = \sqrt{\V (X(\lceil a \rceil , \lfloor a \rfloor + \lfloor b \rfloor ))} + \mathcal{O}_\theta (1) \]
which, combining with \eqref{eq:approxVar}, yields
\begin{equation}\label{eq:varX}
\sqrt{\V (X(a,a+b))} = \sqrt{ \frac{\theta}{3} \log b } + \mathcal{O}_\theta (1).
\end{equation}
Using Markov inequality, \eqref{eq:EX} and \eqref{eq:varX} imply 
\[ \frac{X(a,a+b) -  X(\lceil a \rceil , \lfloor a \rfloor + \lfloor b \rfloor )}{ \sqrt{\V (X(a,a+b))} }  \overset{\PP}{\underset{b \to \infty}{\longrightarrow}} 0 \]
and applying Slutsky's lemma it follows
\[\frac{X(a,a+b) - \E (X(a,a+b))}{\sqrt{\V (X(a,a+b))}} \underset{b \to \infty}{\overset{d}{\longrightarrow}} \mathcal{N} \left( 0, 1\right)\]
which completes the proof.

\subsection{Proof of point $(ii)$ of Theorem~\ref{thm:asympttau}}

\subsubsection{Case $\nu$ rational}

\begin{lem}\label{lem:transfo}
Let $f$ be a non-negative function on $[0,1]$ such that $f$ is integrable on $[0,1]$ and $x\mapsto \frac{f(x)}{x}$ is integrable in the neighbourhood of $0$. Let $t\in \R$. Then
\[\int_0^1 \frac{f(\{tx\})}{x} \mathrm{d}x \underset{t\to +\infty}{=} (\log t) \int_0^1 f(x) \mathrm{d}x + \mathcal{O}(1).\] 
\end{lem}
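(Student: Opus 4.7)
The natural move is the substitution $u = tx$ (so $\mathrm{d}x/x = \mathrm{d}u/u$), which turns the integral into
\[
\int_0^1 \frac{f(\{tx\})}{x}\,\mathrm{d}x \;=\; \int_0^t \frac{f(\{u\})}{u}\,\mathrm{d}u.
\]
The plan is then to split this at $u=1$, handle the near-zero piece by hypothesis, and cut the remainder over consecutive unit intervals $[k,k+1]$, on each of which $\{u\}=u-k$ is smooth. The integral over $[0,1]$ equals $\int_0^1 f(v)/v\,\mathrm{d}v$, which is finite by assumption and contributes only an $\mathcal{O}(1)$ term.

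For the main part $\int_1^t f(\{u\})/u\,\mathrm{d}u$, I would write it as
\[
\sum_{k=1}^{\lfloor t\rfloor-1}\int_0^1 \frac{f(v)}{v+k}\,\mathrm{d}v \;+\; \int_{\lfloor t\rfloor}^{t}\frac{f(\{u\})}{u}\,\mathrm{d}u,
\]
the last integral being bounded by $\frac{1}{\lfloor t\rfloor}\int_0^1 f(v)\,\mathrm{d}v = \mathcal{O}(1)$. The key comparison is
\[
0 \leq \frac{1}{k}\int_0^1 f(v)\,\mathrm{d}v - \int_0^1 \frac{f(v)}{v+k}\,\mathrm{d}v \;=\; \int_0^1 \frac{v\,f(v)}{k(v+k)}\,\mathrm{d}v \;\leq\; \frac{1}{k^2}\int_0^1 f(v)\,\mathrm{d}v,
\]
which is summable in $k$. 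Hence
\[
\sum_{k=1}^{\lfloor t\rfloor-1}\int_0^1 \frac{f(v)}{v+k}\,\mathrm{d}v \;=\; \Bigl(\sum_{k=1}^{\lfloor t\rfloor-1}\frac{1}{k}\Bigr)\int_0^1 f(v)\,\mathrm{d}v + \mathcal{O}(1) \;=\; (\log t)\int_0^1 f(v)\,\mathrm{d}v + \mathcal{O}(1),
\]
using the standard harmonic-sum asymptotic and the fact that $\log\lfloor t\rfloor = \log t + \mathcal{O}(1/t)$. Combining the three pieces gives the claim.

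The argument is essentially routine; the only point requiring care is verifying that the error from replacing $1/(v+k)$ by $1/k$ is summable uniformly — which is exactly where the integrability of $f$ (not of $f(x)/x$) is used for $k\geq 1$, while the integrability of $f(x)/x$ near $0$ is used only to handle the piece on $[0,1]$ after substitution. No other obstacle is foreseen.
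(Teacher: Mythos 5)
Your proposal is correct and follows essentially the same route as the paper: substitute $u=tx$, split off the $[0,1]$ piece (finite by the integrability of $f(x)/x$ near $0$), decompose the rest over unit intervals, and compare with the harmonic sum; the paper simply sandwiches each term between $\frac{1}{k+1}\int_0^1 f$ and $\frac{1}{k}\int_0^1 f$ instead of your explicit $\mathcal{O}(1/k^2)$ error estimate, which is an equivalent bookkeeping choice.
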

\begin{proof}
Let $t\geq 2$. It suffices to write
\[\int_0^1 \frac{f(\{ tx\})}{x} \mathrm{d}x = \int_0^t \frac{f(\{ x\})}{x} \mathrm{d}x = \int_0^1 \frac{f(x)}{x} \mathrm{d}x + \sum_{k=1}^{\lfloor t \rfloor -1} \int_k^{k+1} \frac{f(\{x\})}{x} \mathrm{d}x + \int_{\lfloor t\rfloor }^t \frac{f(\{x\})}{x} \mathrm{d}x \]
and to notice that 
\[\sum_{k=1}^{\lfloor t \rfloor -1} \frac{1}{k+1} \int_0^1  f(x) \mathrm{d}x  \leq \sum_{k=1}^{\lfloor t \rfloor -1} \int_k^{k+1} \frac{f(\{x\})}{x} \mathrm{d}x \leq \sum_{k=1}^{\lfloor t \rfloor -1} \frac{1}{k} \int_0^1  f(x) \mathrm{d}x \]
and 
\[ \int_{\lfloor t\rfloor }^t \frac{f(\{x\})}{x} \mathrm{d}x \leq \frac{1}{\lfloor t \rfloor} \int_0^1  f(x) \mathrm{d}x.\]
\end{proof}

We are ready to prove point $(ii)$ of the theorem for the case $\nu = \frac{r}{s}$ with $\frac{r}{s} >1$ and $\gcd(r,s)=1$.
Let $a \in \R$. We want to show
\[\frac{X \left( a, \frac{r}{s}a \right) - \left( \frac{r}{s} -1 \right)a}{\sqrt{\theta \log a \left( \frac{1}{6} - \frac{1}{6sr} \right)}} \underset{a\to +\infty}{\overset{d}{\longrightarrow}} \mathcal{N}(0,1). \]

With the notation $T =\sum\limits_{y\in \mathcal{W}} \{(a+b)y\} - \{ay\}$, we established that as soon as $\V (T) \to \infty$, 
\[\frac{X (a,a+b) -b + \E (T)}{\sqrt{\V (T)}} \overset{d}{\longrightarrow} \mathcal{N}(0,1).\]
Set $b=\left( \frac{r}{s} -1 \right)a$. Using twice Lemma~\ref{lem:transfo} with the identity function and $t= \frac{r}{s}a$ and then $t = a$, we get by subtraction 
\[\E (T) = \theta \left( \frac{1}{2}\log \left( \frac{r}{s}a \right) - \frac{1}{2} \log a  + \mathcal{O}(1) \right) = \mathcal{O}_\theta (1). \]
Now, denoting $t:=\frac{a}{s}$, for all $x\in [0,1]$,
\[ \{ax \} - \left\{ \frac{r}{s} a x \right\} = \{stx \} - \{rtx  \} = (s-r) \{tx\} - \sum_{m=1}^{s-1} \mathds{1}_{\{tx \} \geq \frac{m}{s} } +  \sum_{n=1}^{r-1} \mathds{1}_{\{tx \} \geq \frac{n}{r} }.\]
Hence applying Lemma~\ref{lem:transfo} with the function $f: x\mapsto \left((s-r) x - \sum\limits_{m=1}^{s-1} \mathds{1}_{x \geq \frac{m}{s} } +  \sum_{n=1}^{r-1} \mathds{1}_{x \geq \frac{n}{r} }\right)^2$, we get
\[\int_0^1 \frac{\left( \{ax\} - \left\{ \frac{r}{s}ax \right\}\right)^2 }{x} \mathrm{d}x = \int_0^1 \frac{f(\{tx\})}{x} \mathrm{d}x \underset{t\to +\infty }{=} \log t \int_0^1 f(x) \mathrm{d}x + \mathcal{O}(1).\]
The author in \cite[Appendix B]{Bahier2017} shows that 
\[\int_0^1 f(x)\mathrm{d}x = \lim_{n \to \infty} \frac{1}{n} \sum_{j=1}^n (\{s j \alpha \} - \{r j \alpha \})^2  \]
where $\alpha$ is any arbitrary irrational number, and computes this limit explicitly, equal to $\frac{1}{6} - \frac{1}{6sr}$, which gives the claim. 
\subsubsection{Case $\nu$ irrational}

Let $\nu$ be an irrational number. For all $a>0$, let $\mu_a$ be the empirical measure of $(U_a, \nu U_a )$ on $(\R /\Z)^2$, where $U_a$ is a uniform random variable on $[0,a]$. 

Then, the Fourier transform of $\mu_a$ is given for all $(k,l)\in \Z^2$ by 
\[\widehat{\mu_a} (k,l) = \frac{1}{a} \int_0^a \mathrm{e}^{2i\pi (k + l\nu ) x} \mathrm{d}x \underset{a\to\infty}{\longrightarrow } \mathds{1}_{k+l\nu =0}.\]
Since $\nu$ is irrational, then $k+l\nu =0$ if and only if $(k,l)=(0,0)$. We deduce that $\mu_a$ converges to the Lebesgue measure of dimension $2$ on $[0,1]^2$. 

Let $f$ be a function from $(\R /\Z)^2$ to $\R$ defined by $f(x,y)=(x-y)^2$. $f$ is continuous everywhere, excepted on $\R/\Z \times \{\bar{0}\}$ and $\{\bar{0}\} \times \R/\Z$, which are of measure zero with respect to the Lebesgue measure of dimension $2$. Hence by the continuous mapping theorem, 
\[\int f \mathrm{d}\mu_a  \underset{a\to\infty}{\longrightarrow } \int_0^1 \int_0^1 (x-y)^2 \mathrm{d}x \mathrm{d}y = \frac{1}{6},\]
so that by a change of variables we get
\begin{equation}\label{eq:convirrat}
\int_0^1 (\{ ax\} - \{\nu a x\})^2 \mathrm{d}x = \frac{1}{a} \int_0^a (\{ x\} - \{\nu x\})^2  \mathrm{d} x = \int f \mathrm{d}\mu_a  \underset{a\to\infty}{\longrightarrow }  \frac{1}{6}. 
\end{equation}

It remains to show that this implies
\begin{equation}\label{eq:convlog}
\frac{1}{\log a} \int_0^1 \frac{(\{ ax\} - \{\nu a x\})^2}{x} \mathrm{d}x \underset{a\to\infty}{\longrightarrow }  \frac{1}{6}.
\end{equation}
Assume $a>1$. We write
\[\int_0^a \frac{f ( \{x\} , \{\nu x\})}{x} \mathrm{d}x = \int_0^1 \frac{f ( \{x\} , \{\nu x\})}{x} \mathrm{d}x  + \sum_{k=1}^{\lfloor a \rfloor -1} \int_k^{k+1} \frac{f ( \{x\} , \{\nu x\})}{x} \mathrm{d}x  + \int_{\lfloor a \rfloor }^a \frac{f ( \{x\} , \{\nu x\})}{x} \mathrm{d}x  \]
with $\int_0^1 \frac{f ( \{x\} , \{\nu x\})}{x} \mathrm{d}x < +\infty$, and $\int_{\lfloor a \rfloor }^a \frac{f ( \{x\} , \{\nu x\})}{x} \mathrm{d}x = \mathcal{O}(1)$ since $f$ is bounded. Moreover, for all integers $k\geq 1$,
\[ \frac{1}{k+1} \int_k^{k+1} f ( \{x\} , \{\nu x\}) \mathrm{d}x \leq \int_k^{k+1} \frac{f ( \{x\} , \{\nu x\})}{x} \mathrm{d}x \leq \frac{1}{k} \int_k^{k+1} f ( \{x\} , \{\nu x\}) \mathrm{d}x.\]
For the right-hand side inequality, denoting $a_k := \frac{1}{k}$, and $b_k:= \int_k^{k+1} f ( \{x\} , \{\nu x\}) \mathrm{d}x$, a summation by parts gives, for all $n \geq 1$, 
\begin{align*}
\sum_{k=1}^n a_k b_k &= a_n \sum_{k=1}^n b_k - \sum_{k=1}^{n-1} \sum_{m=1}^k b_m(a_{k+1}-a_k) \\
	&= \frac{1}{n} \int_1^{n+1} f ( \{x\} , \{\nu x\}) \mathrm{d}x + \sum_{k=1}^{n-1} \frac{1}{k(k+1)} \int_1^{k+1} f ( \{x\} , \{\nu x\}) \mathrm{d}x
\end{align*}
and from \eqref{eq:convirrat} we deduce
\begin{align*}
\sum_{k=1}^n a_k b_k &= \frac{1}{6} + o (1) + \sum_{k=1}^{n-1} \frac{1}{k} \left(\frac{1}{6} + o(1) \right) \\
	&= \frac{1}{6} \log n + o(\log n)
\end{align*}
as $n \to \infty$. Replacing $a_k$ by $\frac{1}{k+1}$ leads to the same asymptotic expression. Hence, from the squeeze theorem,
\[\frac{1}{\log a} \sum_{k=1}^{\lfloor a \rfloor -1} \int_k^{k+1} \frac{f ( \{x\} , \{\nu x\})}{x} \mathrm{d}x  \underset{a\to\infty}{\longrightarrow }  \frac{1}{6}, \] 
which gives \eqref{eq:convlog}.

\section{Translation of the limiting point process related to permutation matrices. Proof of Proposition~\ref{prop:transla}}
\label{sec:transla}

In this section we show that the translation of the limiting point process related to permutation matrices converges to the limiting point process related to modified permutation matrices. The precise statement corresponds to Proposition~\ref{prop:transla}. We will need the following lemma:

\begin{lem}
For all $j\in \N^*$,
\[( \{A y_1 \} , \{A y_2 \} , \dots , \{A y_j \} , y_1 , \dots , y_j ) \underset{A \to \infty}{\overset{d}{\longrightarrow}}  (\Phi_1 , \dots , \Phi_j , y_1 , \dots , y_j)\]
where $\Phi_1, \dots , \Phi_j$ are i.i.d random variables uniformly distributed on $[0,1)$ and independent of $y_1, \dots , y_j$. 
\end{lem}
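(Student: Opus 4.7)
I view the random vectors in the compact space $(\R/\Z)^j \times [0,1]^j$ (identifying $[0,1)$ with the circle group via the fractional part). By the Stone--Weierstrass theorem on this compact space, weak convergence is equivalent to the convergence, for every $(k_1,\ldots,k_j)\in\Z^j$ and every $(t_1,\ldots,t_j)\in\R^j$, of the mixed Fourier/characteristic expectations
\[ \E\!\left[\mathrm{e}^{2i\pi\sum_i k_i u_i}\,\mathrm{e}^{i\sum_i t_i y_i}\right]. \]
Evaluated on the target $(\Phi_1,\ldots,\Phi_j,y_1,\ldots,y_j)$, the independence of $(\Phi_i)$ from $\vec{y}$ together with the fact that the Fourier coefficients of the uniform law on $[0,1)$ vanish away from the origin gives
\[ \E\!\left[\mathrm{e}^{2i\pi\sum_i k_i \Phi_i}\right]\E\!\left[\mathrm{e}^{i\sum_i t_i y_i}\right]= \mathds{1}_{\vec{k}=0}\;\varphi_{\vec{y}}(\vec{t}), \]
where $\varphi_{\vec{y}}$ denotes the characteristic function of $\vec{y}:=(y_1,\ldots,y_j)$.

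On the approximating side, the identity $\mathrm{e}^{2i\pi k_i\{Ay_i\}}=\mathrm{e}^{2i\pi k_i A y_i}$ (valid since $k_i\in\Z$) collapses the expectation into a single characteristic function of $\vec y$:
\[ \E\!\left[\mathrm{e}^{2i\pi\sum_i k_i\{Ay_i\}}\,\mathrm{e}^{i\sum_i t_i y_i}\right]= \E\!\left[\mathrm{e}^{i\sum_i(2\pi A k_i+t_i)y_i}\right]= \varphi_{\vec{y}}(2\pi A\vec{k}+\vec{t}). \]
When $\vec{k}=0$ the two expressions already coincide. When $\vec{k}\neq 0$ one has $\|2\pi A\vec{k}+\vec{t}\|\to +\infty$ as $A\to +\infty$, so the Riemann--Lebesgue lemma concludes, \emph{provided} the joint law of $\vec{y}$ is absolutely continuous with respect to Lebesgue measure on $\R^j$.

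I would establish this last fact by invoking the stick-breaking representation: almost surely $y_i=(1-U_i)\prod_{\ell<i}U_\ell$ for $1\leq i\leq j$, where the $U_\ell$ are i.i.d.\ with Beta$(\theta,1)$ density $\theta u^{\theta-1}$ on $(0,1)$. The map $(U_1,\ldots,U_j)\mapsto(y_1,\ldots,y_j)$ is a $\mathcal{C}^1$-diffeomorphism of $(0,1)^j$ onto the open simplex $\{\vec{y}:y_i>0,\ \sum_i y_i<1\}$, with lower-triangular Jacobian matrix whose diagonal entries $-\prod_{\ell<i}U_\ell$ are almost surely non-zero. Hence $\vec{y}$ admits a density on $\R^j$, the characteristic function $\varphi_{\vec y}$ is the Fourier transform of an $L^1$ function, and Riemann--Lebesgue applies.

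The only mild obstacle is the absolute continuity of the joint law of $(y_1,\ldots,y_j)$; once this is settled by the transparent Jacobian computation above, the rest of the argument reduces to a single invocation of Riemann--Lebesgue together with the standard Fourier characterization of weak convergence on the compact group $(\R/\Z)^j\times[0,1]^j$.
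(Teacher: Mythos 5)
Your proposal is correct and follows essentially the same route as the paper: both reduce the statement to convergence of the mixed Fourier/characteristic functionals, use $\mathrm{e}^{2i\pi k\{Ay\}}=\mathrm{e}^{2i\pi kAy}$, and conclude by the Riemann--Lebesgue lemma from the absolute continuity of the law of $(y_1,\dots,y_j)$. The only difference is cosmetic: you justify the absolute continuity directly via the stick-breaking Jacobian, where the paper simply cites the literature for the existence of a density.
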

\begin{proof}
Let $j \in \N^*$. We know that $\vec{y}:=(y_1 , \dots , y_j)$ has a density with respect to the Lebesgue measure (see \cite{arratia2003logarithmic}). Hence, for all $\vec{k} \in \Z^j$ and $\vec{\lambda} \in \R^j$, 
\[\E \left[ \mathrm{e}^{2i\pi A \vec{k} \cdot \vec{y} + i \vec{\lambda} \cdot \vec{y} }  \right] = \E \left[ \mathrm{e}^{i\vec{y} \cdot (2\pi A \vec{k} + \vec{\lambda} )}  \right] = \widehat{\mu_{\vec{y}}}  (2\pi A \vec{k} + \vec{\lambda}) \underset{A\to \infty}{\longrightarrow} 0 \]
as soon as $\vec{k}\neq 0$, applying Riemann-Lebesgue lemma.
\end{proof}

We are ready to prove Proposition~\ref{prop:transla}.

Let $f$ be a continuous function from $\R$ to $\C$ such that $\mathrm{supp} f \subset [-M , M]$ for any $M>0$. With the same notations as in the previous lemma, we want to show:
\begin{equation}\label{eq:transla}
\sum_{\substack{ k\in \Z \setminus \{0\} \\ j\geq 1  }} f \left( \frac{k}{y_j} - A \right)  \underset{A \to \infty}{\overset{d}{\longrightarrow}}  \sum_{\substack{ k\in \Z \\ j\geq 1  }} f \left( \frac{k - \Phi_j}{y_j} \right).
\end{equation}
Let $j_0 \in \N^*$. The probability that there exists non-zero terms in the sum $\sum\limits_{\substack{ k\in \Z \\ j\geq j_0  }}  f \left( \frac{k-\Phi_j}{y_j} \right)$ is 
\begin{align*}
\PP \left( \exists k \in \Z , \ \exists j \geq j_0 , \left\vert \frac{k-\Phi_j}{y_j} \right\vert \leq M \right) &\leq  \sum_{j\geq j_0} \sum_{k\in \Z} \PP \left( \left\vert \frac{k-\Phi_j}{y_j} \right\vert \leq M \right) \\
	&=  \sum_{j\geq j_0}  \left[ \PP (\Phi_j \leq M y_j) + \PP (1-\Phi_j \leq M y_j) + \sum_{k = -\lfloor M \rfloor -1}^{-1} \PP \left( y_j \geq \frac{\vert k - \Phi_j \vert}{M} \right)  \right. \\
	&\qquad + \left. \sum_{k = 2}^{\lfloor M \rfloor +1} \PP \left( y_j \geq \frac{\vert k - \Phi_j \vert}{M} \right) \right] \\
	&\leq \sum_{j\geq j_0} \left[ M r^j + M r^j + \sum_{k = -\lfloor M \rfloor -1}^{-1} \frac{M r^j}{-k} + \sum_{k = 2}^{\lfloor M \rfloor +1} \frac{M r^j}{k-1}   \right] \\
	&\leq \frac{2M(1+ \log(M+1))}{1-r} r^{j_0} 
\end{align*}
where we recall that $r$ is the constant given by \eqref{eq:yjrj}.
Thus 
\begin{equation}\label{eq:Yj}
\PP \left( \exists j\geq j_0,\ \exists k \in \Z,\  f \left( \frac{k-\Phi_j}{y_j} \right) \neq 0 \right) \underset{j_0 \to\infty}{\longrightarrow} 0.
\end{equation}

Let $A>2M$. The probability that there exists non-zero terms in the sum $\sum\limits_{\substack{ k\in \Z \setminus \{0\} \\ j\geq j_0  }}  f \left( \frac{k}{y_j} -A \right)$ is 
\[\PP \left( \exists k \in \Z\setminus \{0\} , \ \exists j \geq j_0 , \ \left\vert \frac{k}{y_j} - A \right\vert \leq M \right) = \PP \left( \exists k \in \Z \cap [1, M+A] , \ \exists j \geq j_0 , \ \frac{k}{M+A} \leq y_j \leq \frac{k}{A-M} \right) \]
since $0<y_j<1$ a.s. for all $j$. Moreover, the intervals $\left[ \frac{k}{M+A} , \frac{k}{A-M}  \right]$ and $\left[ \frac{k+1}{M+A} , \frac{k+1}{A-M}  \right]$ overlap if and only if $k \geq \frac{A-M}{2M}$. \\
On the one hand,
\begin{align*}
\PP \left( \exists k \in \Z \cap \left[ \frac{A-M}{2M} , M+A \right] , \ \exists j \geq j_0 , \ \frac{k}{M+A} \leq y_j \leq \frac{k}{A-M} \right) &\leq \PP \left(  \exists j\geq j_0 , \ y_j \geq \frac{\frac{A-M}{2M}}{M+A} \right) \\
	&\leq \sum_{j\geq j_0} \PP \left( y_j \geq \frac{1}{6M} \right) \\
	&\leq 6M \sum_{j\geq j_0} r^j \\
	&\underset{j_0 \to +\infty}{\longrightarrow} 0.
\end{align*}
On the other hand, assuming $j_0\geq 3$, it is easy to check that $(y_j)_{j\geq j_0} \overset{d}{=} (Py_j)_{j\geq 2}$ where $P:=U_2 \dots U_{j_0 -1}$ is a product of $j_0-2$ independent Beta$(\theta , 1)$ random variables, and we deduce
\begin{align*}
&\PP \left( \exists k \in \Z \cap \left[ 1, \frac{A-M}{2M} \right] , \ \exists j \geq j_0 , \ \frac{k}{M+A} \leq y_j \leq \frac{k}{A-M} \right) \\
&\qquad = \PP \left( \exists k \in \Z \cap \left[ 1, \min \left( \frac{A-M}{2M} , P(M+A) \right) \right] , \ \exists j \geq 2 , \ \frac{k}{P(M+A)} \leq y_j \leq \frac{k}{P(A-M)} \right).
\end{align*} 
Conditionally to $P$, the corresponding quantity is bounded by the probability that there is at least one point of $\mathcal{X}$ located in the disjoint union 
\[\bigcup\limits_{k \in \Z \cap \left[ 1, \min \left( \frac{A-M}{2M} , P(M+A) \right) \right]}  \left[ \frac{k}{P(M+A)} , \frac{k}{P(A-M)} \right],  \]
hence
\begin{align*}
&\PP \left( \exists k \in \Z \cap \left[ 1, \frac{A-M}{2M} \right] , \ \exists j \geq j_0 , \ \frac{k}{M+A} \leq y_j \leq \frac{k}{A-M} \right)  \\
	&\qquad \leq \E \left( 1- \exp \left( - \sum_{1\leq k \leq \min \left( \frac{A-M}{2M} , P(M+A) \right)}   \int_{\frac{k}{P(M+A)}}^{\frac{k}{P(A-M)}} \frac{\theta}{x} \mathrm{d}x \right) \right) \\
	&\qquad \leq 1 - \E \left( \exp \left( -\theta \min \left( \frac{A-M}{2M} , P(M+A) \right) \log \left( \frac{A+M}{A-M}  \right) \right) \right) \\
	&\qquad \leq  1 - \E \left( \exp \left( -\theta P \frac{2M(M+A)}{A-M} \right) \right)  \\
	&\qquad \leq 1 - \E \left( \exp \left( -6M \theta P  \right) \right) .
\end{align*}
In addition, $P$ converges almost surely to $0$ when $j_0$ goes to $+\infty$, and $0 \leq \exp (-6M\theta P) \leq 1$, then by dominated convergence $\E \left( \exp \left( -6M \theta P  \right) \right) \underset{j_0\to +\infty}{\longrightarrow} 1$. \\
Consequently, 
\begin{equation}\label{eq:YjA}
\sup\limits_A \PP \left( \exists j\geq j_0,\ \exists k \in \Z \setminus \{0\},\  f \left( \frac{k}{y_j} -A \right) \neq 0 \right) \underset{j_0 \to\infty}{\longrightarrow} 0.
\end{equation}

Furthermore, it is easy to check that
\[\sum_{j=1}^{j_0} \sum_{k\in \Z} f \left( \frac{k-\Phi_j}{y_j} \right) = \sum_{(j,k)\in S_{j_0}} f \left( \frac{k-\Phi_j}{y_j} \right) \]
and
\[\sum_{j=1}^{j_0} \sum_{k\in \Z \setminus \{0\}} f \left( \frac{k}{y_j} - A \right) = \sum_{(j,k)\in S_{j_0}} f \left( \frac{k-\{A y_j \}}{y_j} \right)\]
where $S_{j_0}:=\{ (j,k) : \ 1\leq j \leq j_0 , \ \vert k \vert \leq M+1 \}$ is finite. Using the previous lemma and the continuous mapping theorem we deduce
\begin{equation}\label{eq:translafinite}
\sum_{(j,k)\in S_{j_0}} f \left( \frac{k-\{A y_j \}}{y_j} \right)  \underset{A \to \infty}{\overset{d}{\longrightarrow}} \sum_{(j,k)\in S_{j_0}} f \left( \frac{k-\Phi_j}{y_j} \right). 
\end{equation}

Now, let $g$ be a continuous and bounded function from $\R$ to $\R$. For all $j$ and $A$, let $V_{j,A}:= \sum\limits_{k\in \Z \setminus \{0\}} f \left( \frac{k}{y_j} - A \right)$, and $V_j:= \sum\limits_{k\in \Z} f \left( \frac{k-\Phi_j}{y_j} \right)$. For all $j_0\geq 1$, denoting $\Omega_{j_0} := \{\forall j > j_0 , \ V_{j,A} =0 \}$, we have
\begin{align*}
\E \left( g \left(  \sum_{j\geq 1} V_{j,A} \right) \right) &= \E \left( g \left(  \sum_{j\leq j_0} V_{j,A} \right) \mathds{1}_{\Omega_{j_0}} \right) + \E \left( g \left(  \sum_{j\geq 1} V_{j,A} \right)  \mathds{1}_{\Omega_{j_0}^\complement}  \right) \\
	&= \E \left( g \left(  \sum_{j\leq j_0} V_{j,A} \right) \right) -\E \left( g \left(  \sum_{j\leq j_0} V_{j,A} \right) \mathds{1}_{\Omega_{j_0}^\complement} \right) + \E \left( g \left(  \sum_{j\geq 1} V_{j,A} \right)  \mathds{1}_{\Omega_{j_0}^\complement}  \right) \\
	&= \E \left( g \left(  \sum_{j\leq j_0} V_{j,A} \right) \right) + \mathcal{O}( \PP ( \Omega_{j_0}^\complement )).
\end{align*}
Hence
\begin{align*}
\underset{A \to \infty}{\overline{\underline{ \lim }}} \  \E \left( g \left(  \sum_{j\geq 1} V_{j,A} \right) \right) &= \underset{A \to \infty}{\overline{\underline{ \lim }}} \ \E \left( g \left(  \sum_{j\leq j_0} V_{j,A} \right) \right) + \mathcal{O}( \sup\limits_{A} \PP ( \exists j \geq j_0 , \ V_{j,A} \neq 0 )) \\
	&= \E \left( g \left(  \sum_{j\leq j_0} V_j \right) \right) +  \mathcal{O}( \sup\limits_{A} \PP ( \exists j \geq j_0 , \ V_{j,A} \neq 0 )) \\
	&= \E \left( g \left(  \sum_{j\geq 1} V_j \right) \right) + \mathcal{O}(\PP ( \exists j \geq j_0 , \ V_j \neq 0 ) + \sup\limits_{A} \PP ( \exists j \geq j_0 , \ V_{j,A} \neq 0 )) \\
	&\underset{j_0 \to \infty}{=} \E \left( g \left(  \sum_{j\geq 1} V_j \right) \right) + o(1)
\end{align*}
where the second equality derives from the convergence in distrbution \eqref{eq:translafinite}, and the last equality follows from \eqref{eq:Yj} and \eqref{eq:YjA}. This gives \eqref{eq:transla}.

\begin{remarque}
This result can be easily extended to simple functions (linear combination of indicator functions) with compact support. In particular we have the following corollary: 
\end{remarque}

\begin{corollaire}\label{cor:transla}
Let $s,t \in \R_+$. Using the notations of Sections~\ref{sec:limitMPM} and \ref{sec:limitMP}, we have
\[X(s,s+t) \underset{s \to +\infty}{\overset{d}{\longrightarrow}} \widetilde{X} (t).\]
\end{corollaire}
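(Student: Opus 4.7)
The corollary amounts to Proposition~\ref{prop:transla} evaluated on the indicator $f=\mathds{1}_{(0,t]}$, which is not continuous. Indeed, by the very definitions of $\tau_\infty\circ T_s$ and $\widetilde{\tau}_\infty$ one has $X(s,s+t)=\langle\tau_\infty\circ T_s,\mathds{1}_{(0,t]}\rangle$, and almost surely $\widetilde{X}(t)=\widetilde{\tau}_\infty([0,t])=\langle\widetilde{\tau}_\infty,\mathds{1}_{(0,t]}\rangle$, the latter because the uniformity of the $\Phi_j$ forbids $\widetilde{\tau}_\infty$ from charging the two endpoints $0$ and $t$. The plan is therefore to extend Proposition~\ref{prop:transla} from continuous compactly supported test functions to this indicator via a standard sandwich argument.

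For each $\varepsilon\in(0,t/2)$, I would introduce two piecewise linear continuous functions $f_\varepsilon^\pm:\R\to[0,1]$ with compact support, satisfying $f_\varepsilon^-\leq\mathds{1}_{(0,t]}\leq f_\varepsilon^+$: concretely, $f_\varepsilon^-$ is supported in $[0,t]$ and equal to $1$ on $[\varepsilon,t-\varepsilon]$, while $f_\varepsilon^+$ is supported in $[-\varepsilon,t+\varepsilon]$ and equal to $1$ on $[0,t]$. The sandwich reads
\[\langle\tau_\infty\circ T_s,f_\varepsilon^-\rangle\leq X(s,s+t)\leq\langle\tau_\infty\circ T_s,f_\varepsilon^+\rangle,\]
and Proposition~\ref{prop:transla} applied to $f_\varepsilon^\pm$ yields, as $s\to+\infty$, the convergence in distribution of each bounding quantity to $\langle\widetilde{\tau}_\infty,f_\varepsilon^\pm\rangle$.

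Letting next $\varepsilon\to 0$, the random measure $\widetilde{\tau}_\infty$ is almost surely locally finite (by Lemma~\ref{lem:expdecay} together with the summability of the $y_j$) and almost surely has no atom at $0$ or $t$; by monotone convergence, $\langle\widetilde{\tau}_\infty,f_\varepsilon^-\rangle$ increases a.s.~to $\widetilde{X}(t)$ while $\langle\widetilde{\tau}_\infty,f_\varepsilon^+\rangle$ decreases a.s.~to $\widetilde{X}(t)$.

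The two limits combine in the standard way. For any non-integer $x$ (automatically a continuity point of the integer-valued law of $\widetilde{X}(t)$), Portmanteau applied for fixed $\varepsilon$ bounds $\limsup_s \PP(X(s,s+t)\leq x)$ above by $\PP(\langle\widetilde{\tau}_\infty,f_\varepsilon^-\rangle\leq x)$, and the corresponding $\liminf_s$ below by $\PP(\langle\widetilde{\tau}_\infty,f_\varepsilon^+\rangle<x)$; then sending $\varepsilon\to 0$ drives both bounds to $\PP(\widetilde{X}(t)\leq x)$ by the monotone convergence of the bounding laws, which closes the sandwich. The main technical subtlety is precisely this interchange of the $s\to+\infty$ and $\varepsilon\to 0$ limits, but it is routine once the monotone pointwise convergence of $f_\varepsilon^\pm$ and the local finiteness of $\widetilde{\tau}_\infty$ are in hand.
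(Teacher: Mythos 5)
Your argument is correct, but it proceeds differently from the paper. The paper does not use Proposition~\ref{prop:transla} as a black box: it reruns the proof of that proposition with $f=\mathds{1}_{(0,t]}$ directly, writing $X(s,s+t)=\sum_{k\neq 0,\,j\geq 1}\mathds{1}_{\frac{k}{y_j}-s\in(0,t]}$ and $\widetilde{X}(t)=\sum_{k\in\Z,\,j\geq 1}\mathds{1}_{\frac{k-\Phi_j}{y_j}\in(0,t]}$, and observing that the only place where continuity of $f$ entered — the continuous mapping step applied to $(\{Ay_1\},\dots,\{Ay_{j_0}\},y_1,\dots,y_{j_0})$ — still goes through because a.s.\ no point $\frac{k-\Phi_j}{y_j}$ hits a prescribed real number, so the indicator is a.s.\ continuous at the limit; the truncation bounds \eqref{eq:Yj} and \eqref{eq:YjA} are reused verbatim. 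You instead keep Proposition~\ref{prop:transla} intact and extend it to the indicator by sandwiching $\mathds{1}_{(0,t]}$ between continuous compactly supported functions $f_\varepsilon^\pm$, then combining Portmanteau for fixed $\varepsilon$ with the a.s.\ monotone convergence $\langle\widetilde{\tau}_\infty,f_\varepsilon^\pm\rangle\to\widetilde{X}(t)$ (valid since $\widetilde{\tau}_\infty$ charges neither $0$ nor $t$ and is a.s.\ locally finite), and evaluating CDFs at non-integer points of the integer-valued limit. This buys modularity: no need to reopen the proof of the proposition, and the same scheme extends to any interval. The paper's route is shorter once that proof is at hand and yields the extension to simple functions mentioned in its remark. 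One small point to tighten: local finiteness of $\widetilde{\tau}_\infty$ does not follow from the decay of the $y_j$ alone (each $j$ could a priori contribute one lattice point to a fixed window); you also need the uniformity of the $\Phi_j$, via a Borel--Cantelli argument on $\{\mathrm{dist}(\Phi_j,\Z)\leq M y_j\}$ — this is precisely the estimate \eqref{eq:Yj} in Section~\ref{sec:transla}, which you can cite instead of Lemma~\ref{lem:expdecay}.
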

\begin{proof}
It suffices to write
\[X(s,s+t) = \tau_\infty ((s,s+t]) = \sum_{\substack{ k\in \Z \setminus \{0\} \\ j\geq 1  }} \mathds{1}_{ \frac{k}{y_j} - s  \in (0,t]},\]
\[\widetilde{X}(t) = \widetilde{\tau}_\infty ((0,t]) = \sum_{\substack{ k\in \Z \\ j\geq 1 }} \mathds{1}_{  \frac{k - \Phi_j}{y_j} \in (0,t]},\]
and for all $x \in \R$, a.s., $\sum\limits_{\substack{ k\in \Z \\ j\geq 1  }} \mathds{1}_{ \frac{k - \Phi_j}{y_j} = x} =0$, so the continuous mapping theorem applies with $f=\mathds{1}_{(0,t]}$ under a similar reasoning as in the previous proof. 
\end{proof}

\section*{Appendix}

In this section we prove Lemmas~\ref{lem:calcul}, \ref{lem:calcul2} and \ref{lem:calcul3}.

\subsection*{Proof of Lemma~\ref{lem:calcul}}

Let $n\in \N^*$. A simple change of variables ($t=nx$) gives
\[\int_0^1 \frac{\{nx\}}{x} \mathrm{d}x = \sum_{k=0}^{n-1} \int_0^1 \frac{t}{t+k} \mathrm{d}t = 1 + \sum_{k=1}^{n-1} \int_0^1 \frac{t}{t+k} \mathrm{d}t,\]
with for all $k\geq 1$ and $t\in [0,1]$,  $\frac{1}{k+1} \leq \frac{1}{t+k} \leq \frac{1}{k}$, thus 
\[1 + \frac{1}{2} \sum_{k=1}^{n-1} \frac{1}{k+1} \leq \int_0^1 \frac{\{nx\}}{x} \mathrm{d}x \leq 1 + \frac{1}{2} \sum_{k=1}^{n-1} \frac{1}{k} \] so that 
\[\int_0^1 \frac{\{nx\}}{x} \mathrm{d}x \underset{n\to \infty}{=} \frac{1}{2} \log n + \mathcal{O}(1).\]
The same change of variables leads to
\begin{align*}
\int_0^1 \{nx\} \log x \mathrm{d}x &= \sum_{k=0}^{n-1} \int_0^1 \frac{t}{n} \log \left( \frac{t+k}{n} \right) \mathrm{d}t \\
	&= -\frac{1}{2} \log n + \frac{1}{n} \sum_{k=0}^{n-1} \int_0^1 t \log (t+k) \mathrm{d}t \\
	&= -\frac{1}{2} \log n + \frac{1}{n} \sum_{k=0}^{n-1} \left( \frac{1}{2} \log (1+k) - \frac{1}{2} \int_0^1  \frac{t^2}{t+k} \mathrm{d}t \right) \\
	&= -\frac{1}{2} \log n + \frac{1}{2n} \log (n!) - \frac{1}{4n} - \frac{1}{2n} \sum_{k=1}^{n-1} \left( \frac{1}{2} - k + k^2 \log \left( 1+ \frac{1}{k} \right) \right).
\end{align*}
Moreover, as a consequence of Stirling's formula, 
\[\log (n!) \underset{n\to \infty}{=} n \log n - n + \frac{1}{2} \log n + \mathcal{O}(1),\]
and furthermore we have
\[\frac{1}{2} - k + k^2 \log \left( 1+ \frac{1}{k} \right) \underset{k\to \infty}{=} \frac{1}{3k} + \mathcal{O} \left( \frac{1}{k^2} \right).\]
We deduce
\begin{align*}
\int_0^1 \{nx\} \log x \mathrm{d}x &\underset{n\to \infty}{=} -\frac{1}{2} + \frac{1}{4n} \log n + \mathcal{O} \left( \frac{1}{n} \right) - \frac{1}{6n} \log n + \mathcal{O} \left( \frac{1}{n} \right) \\
	&= -\frac{1}{2} + \frac{1}{12n} \log n + \mathcal{O} \left( \frac{1}{n} \right).
\end{align*}

\subsection*{Proof of Lemma~\ref{lem:calcul2}}

Let $\ell,n \in \N^*$. A summation by parts gives
\begin{align*}
\sum_{k=1}^{n} \frac{k}{n} \log \frac{k}{n} &= - \sum_{k=1}^{n-1} \log \left( 1 + \frac{1}{k} \right) \sum_{j=1}^k \frac{j}{n} \\
	&= -\sum_{k=1}^{n-1}  \left( \frac{1}{k} + \mathcal{O} \left( \frac{1}{k^2} \right) \right) \frac{k(k+1)}{2n} \\
	&= -\sum_{k=1}^{n-1} \left(\frac{k}{2n} +\mathcal{O} \left( \frac{1}{n}\right) \right) \\
	&= -\frac{n(n+1)}{4n} + \mathcal{O}(1)   \\
	&= -\frac{n}{4} + \mathcal{O}(1).
\end{align*}
Besides, for all fixed $t \in (0,1)$, 
\begin{align*}
\sum_{k=1}^n \mathds{1}_{\frac{k}{n} \geq t} \log \frac{k}{n}  &= - \sum_{k=\lceil nt \rceil}^{n-1} \log \left( 1 + \frac{1}{k} \right) \sum_{j=\lceil nt \rceil}^k 1 \\
	&= - \sum_{k=\lceil nt \rceil}^{n-1}  \left( \frac{1}{k} + \mathcal{O} \left( \frac{1}{k^2} \right) \right) (k-\lceil nt \rceil +1) \\
	&= - \sum_{k=\lceil nt \rceil}^{n-1}  \left( 1 - \frac{\lceil nt \rceil}{k} + \mathcal{O} \left( \frac{1}{k} \right) \right) \\
	&\underset{n\to \infty}{=} -n (1-t + t \log t) + \mathcal{O}(1).
\end{align*}
Thus, on the one hand, 
\begin{align*}
\sum_{k=1}^{n-1} \left\{ \ell \frac{k}{n} \right\} \log \frac{k}{n} &= \ell \left(\sum_{k=1}^{n-1} \frac{k}{n} \log \frac{k}{n}\right) - \sum_{m=1}^{\ell-1} \sum_{k=1}^{n-1} \mathds{1}_{ \frac{k}{n} \geq \frac{m}{\ell}} \log \frac{k}{n} \\
	&= \ell \left( -\frac{n}{4} + \mathcal{O}(1) \right) - \sum_{m=1}^{\ell-1} \left( -n \left(1- \frac{m}{\ell} + \frac{m}{\ell} \log \frac{m}{\ell} \right) + \mathcal{O}(1) \right) \\
	&= \left[  -\frac{\ell}{4} + \sum_{m=1}^{\ell-1} \left(1- \frac{m}{\ell} + \frac{m}{\ell} \log \frac{m}{\ell} \right) \right] n + \mathcal{O} (1) \\
	&= \left[  \frac{\ell}{4} - \frac{1}{2} + \sum_{m=1}^{\ell-1}  \frac{m}{\ell} \log \frac{m}{\ell} \right] n + \mathcal{O} (1),
\end{align*}
and on the other hand, 
\begin{align*}
\sum_{k=1}^{n-1} \left\{-\ell \frac{k}{n} \right\} \log \frac{k}{n} &= -\ell \left(\sum_{k=1}^{n-1} \frac{k}{n} \log \frac{k}{n}\right) + \sum_{m=0}^{\ell-1} \sum_{k=1}^{n-1} \mathds{1}_{ \frac{k}{n} > \frac{m}{\ell}} \log \frac{k}{n} \\
	&= \left[  \frac{\ell}{4} - \sum_{m=1}^{\ell-1} \left(1- \frac{m}{\ell} + \frac{m}{\ell} \log \frac{m}{\ell} \right) \right] n +  \mathcal{O} (1) + \sum_{k=1}^{n-1} \mathds{1}_{\frac{k}{n}>0} \log \frac{k}{n} \\
	&=  \left[ -\frac{\ell}{4} + \frac{1}{2} - \sum_{m=1}^{\ell-1} \frac{m}{\ell} \log \frac{m}{\ell} \right] n +  \mathcal{O} (1) + \sum_{k=1}^{n-1} \log \frac{k}{n} .
\end{align*}
Finally, it just remains to see
\[\sum_{k=1}^n \log \frac{k}{n} = - \sum_{k=1}^{n-1} k \log \left( 1+ \frac{1}{k} \right) = -n + \frac{1}{2} \log n + \mathcal{O}(1). \]

\subsection*{Proof of Lemma~\ref{lem:calcul3}}

Let $f:x \mapsto (\{ px \} - \{qx\})^2$.
Denote for all positive integers $m$, $E_m = \left\{ \frac{k}{m} ; \ 1\leq k \leq m-1  \right\}$, and let $E_{p,q} = E_p \cup E_q$. 
Noticing that $x\mapsto (\{px\} - \{qx\})$ is a piecewise linear function with constant slope equal to $p-q$ and jumps at multiples of $1/p$ and multiples of $1/q$, the derivative of the distribution $T_f$ related to $f$ on $(0,1)$ is given by
\begin{align*}
(T_f)^\prime &= T_{f^\prime} + \sum_{r \in E_{p,q}} (f(r+0) - f(r-0)) \delta_r \\
	&= 2(p-q) (\{p \cdot \} - \{q \cdot \}) + \sum_{r \in E_{p,q}} (f(r+0) - f(r-0)) \delta_r 
\end{align*}
Thus, integrating by parts gives
\[\int_0^1 \frac{f(x)}{x} \mathrm{d}x =-2 (p-q) \int_0^1 (\{px \} - \{qx \})\log x \mathrm{d}x - \sum_{r\in E_{p,q}} (f(r+0) - f(r-0))\log r.\]
If $r \in E_p \cap E_q$, it is easy to check that $f(r+0) - f(r-0)=0$. \\
If $r= \frac{k}{p} \not\in E_q$ then $f(r+0) - f(r-0) = \left\{ q \frac{k}{p} \right\}^2 - \left( 1 - \left\{ q \frac{k}{p} \right\} \right)^2 = 2 \left\{ q \frac{k}{p} \right\} -1.$ \\
Symmetrically if $r= \frac{j}{q} \not\in E_p$ then $f(r+0) - f(r-0) =  2 \left\{ p \frac{j}{q} \right\} -1.$ \\
Finally we get
\begin{align*}
\sum_{r \in E_{p,q}} (f(r+0) - f(r-0)) \log r &= \sum_{\substack{1\leq k \leq p-1 \\ \frac{1}{q} \nmid \frac{k}{p} }} \left(2 \left\{ q \frac{k}{p} \right\} -1 \right) \log \left( \frac{k}{p} \right) + \sum_{\substack{1\leq j \leq q-1 \\ \frac{1}{p} \nmid \frac{j}{q} }} \left(2 \left\{ p \frac{j}{q} \right\} -1 \right) \log \left( \frac{j}{q} \right) \\
	&=\sum_{k=1}^{p-1} \left( 2\left\{ q\frac{k}{p} \right\} - 1 \right) \log \left( \frac{k}{p} \right) + \sum_{j=1}^{q-1} \left( 2\left\{ p\frac{j}{q} \right\} - 1 \right) \log \left( \frac{j}{q} \right)  \\
	&\qquad + 2 \sum_{m=1}^{\gcd (p,q) -1} \log \left(\frac{m}{\gcd (p,q)}  \right).
\end{align*}

\vspace*{1cm}

\textbf{Acknowledgements}: The author wishes to acknowledge the help provided by his PhD advisor Joseph Najnudel.

\bibliographystyle{plain}
\bibliography{biblio}

\end{document}